\newcommand{\sD}{\slashed{D}}
\newcommand{\1}{\mathbb{I}}
\DeclareMathOperator{\End}{End}
\newcommand{\mJ}{\mathcal{J}}
\DeclareMathOperator{\Dom}{Dom}
\newcommand{\Cliff}{{\mathrm{Cliff}}}
\newcommand{\CCliff}{{\mathbb{C}\mathrm{liff}}}
\DeclareMathOperator{\dom}{dom}
\DeclareMathOperator{\diag}{diag}
\begin{document}
\theoremstyle{plain}
\newtheorem{thm}{Theorem}[section]
\newtheorem*{thm*}{Theorem}
\newtheorem{prop}[thm]{Proposition}
\newtheorem*{prop*}{Proposition}
\newtheorem{lemma}[thm]{Lemma}
\newtheorem{cor}[thm]{Corollary}
\newtheorem*{conj*}{Conjecture}
\newtheorem*{cor*}{Corollary}
\newtheorem{defn}[thm]{Definition}
\theoremstyle{definition}
\newtheorem*{defn*}{Definition}
\newtheorem{rems}[thm]{Remarks}
\newtheorem*{rems*}{Remarks}
\newtheorem{rem}[thm]{Remark}
\newtheorem*{rem*}{Remark}
\newtheorem*{proof*}{Proof}
\newtheorem*{not*}{Notation}
\newcommand{\npartial}{\slash\!\!\!\partial}
\newcommand{\Heis}{\operatorname{Heis}}
\newcommand{\Solv}{\operatorname{Solv}}
\newcommand{\Spin}{\operatorname{Spin}}
\newcommand{\SO}{\operatorname{SO}}
\newcommand{\ind}{\operatorname{ind}}
\newcommand{\Index}{\operatorname{index}}
\newcommand{\ch}{\operatorname{ch}}
\newcommand{\rank}{\operatorname{rank}}
\newcommand{\abs}[1]{\lvert#1\rvert}
 \newcommand{\A}{{\mathcal A}}
        \newcommand{\D}{{\mathcal D}}\newcommand{\HH}{{\mathcal H}}
        \newcommand{\LL}{{\mathcal L}}
        \newcommand{\B}{{\mathcal B}}
        \newcommand{\K}{{\mathcal K}}
\newcommand{\oo}{{\mathcal O}}
         \newcommand{\PP}{{\mathcal P}}
        \newcommand{\s}{\sigma}
        \newcommand{\coker}{{\mbox coker}}
        \newcommand{\p}{\partial}
        \newcommand{\dd}{|\D|}
        \newcommand{\n}{\Vert}
\newcommand{\bma}{\left(\begin{array}{cc}}
\newcommand{\ema}{\end{array}\right)}
\newcommand{\bca}{\left(\begin{array}{c}}
\newcommand{\eca}{\end{array}\right)}
\newcommand{\sr}{\stackrel}
\newcommand{\da}{\downarrow}
\newcommand{\tD}{\tilde{\D}}
        \newcommand{\R}{\mathbb R}
        \newcommand{\C}{\mathbb C}
        \newcommand{\h}{\mathbb H}
\newcommand{\Q}{\mathcal Q}
\newcommand{\Z}{\mathbb Z}
\newcommand{\N}{\mathbb N}
\newcommand{\tto}{\longrightarrow}
\newcommand{\ben}{\begin{displaymath}}
        \newcommand{\een}{\end{displaymath}}
\newcommand{\be}{\begin{equation}}
\newcommand{\ee}{\end{equation}}
        \newcommand{\bean}{\begin{eqnarray*}}
        \newcommand{\eean}{\end{eqnarray*}}
\newcommand{\nno}{\nonumber\\}
\newcommand{\bea}{\begin{eqnarray}}
        \newcommand{\eea}{\end{eqnarray}}
        \newcommand{\CDA}{{\mathcal C}_{{\mathcal D}}({\mathcal A})}
         \newcommand{\CDAc}{{\mathcal C}_{{\mathcal D}}({\mathcal A}_c)}
\newcommand{\supp}[1]{\operatorname{#1}}
\newcommand{\norm}[1]{\parallel\, #1\, \parallel}
\newcommand{\ip}[2]{\langle #1,#2\rangle}
\newcommand{\nc}{\newcommand}
\nc{\nt}{\newtheorem}
\nc{\gf}[2]{\genfrac{}{}{0pt}{}{#1}{#2}}
\nc{\mb}[1]{{\mbox{$ #1 $}}}
\nc{\real}{{\mathbb R}}
\nc{\comp}{{\mathbb C}}
\nc{\ints}{{\mathbb Z}}
\nc{\Ltoo}{\mb{L^2({\mathbf H})}}
\nc{\rtoo}{\mb{{\mathbf R}^2}}
\nc{\slr}{{\mathbf {SL}}(2,\real)}
\nc{\slz}{{\mathbf {SL}}(2,\ints)}
\nc{\su}{{\mathbf {SU}}(1,1)}
\nc{\so}{{\mathbf {SO}}}
\nc{\hyp}{{\mathbb H}}
\nc{\disc}{{\mathbf D}}
\nc{\torus}{{\mathbb T}}
\newcommand{\tk}{\widetilde{K}}
\newcommand{\boe}{{\bf e}}\newcommand{\bt}{{\bf t}}
\newcommand{\vth}{\vartheta}
\newcommand{\CGh}{\widetilde{\CG}}
\newcommand{\db}{\overline{\partial}}
\newcommand{\tE}{\widetilde{E}}
\newcommand{\tr}{\mbox{tr}}
\newcommand{\ta}{\widetilde{\alpha}}
\newcommand{\tb}{\widetilde{\beta}}
\newcommand{\txi}{\widetilde{\xi}}
\newcommand{\hV}{\hat{V}}
\newcommand{\IC}{\mathbf{C}}
\newcommand{\IZ}{\mathbf{Z}}
\newcommand{\IP}{\mathbf{P}}
\newcommand{\IR}{\mathbf{R}}
\newcommand{\IH}{\mathbf{H}}
\newcommand{\IG}{\mathbf{G}}
\newcommand{\CC}{{\mathcal C}}
\newcommand{\CS}{{\mathcal S}}
\newcommand{\CG}{{\mathcal G}}
\newcommand{\CL}{{\mathcal L}}
\newcommand{\CO}{{\mathcal O}}
\nc{\ca}{{\mathcal A}}
\nc{\cag}{{{\mathcal A}^\Gamma}}
\nc{\cg}{{\mathcal G}}
\nc{\chh}{{\mathcal H}}
\nc{\ck}{{\mathcal B}}
\nc{\cl}{{\mathcal L}}
\nc{\cm}{{\mathcal M}}
\nc{\cn}{{\mathcal N}}
\nc{\NN}{{\mathcal N}}
\nc{\cs}{{\mathcal S}}
\nc{\cz}{{\mathcal Z}}
\nc{\br}{{\mathcal R}}
\nc{\vf}{\varphi}
\nc{\sind}{\sigma{\rm -ind}}
\newcommand{\la}{\langle}
\newcommand{\ra}{\rangle}
\newcommand{\al}{\alpha}

\newcommand{\pert}{\mathcal{R}_\D}
\newcommand{\pertsD}{\mathcal{R}_\sD}

 \title{Pseudo-Riemannian spectral triples and the harmonic oscillator}
 
\author{Koen van den Dungen\dag, Mario Paschke, Adam Rennie\ddag} 
\thanks{\texttt{koen.vandendungen@anu.edu.au, mpaschke@spassundwissenschaft.de, renniea@uow.edu.au}}
\maketitle

\vspace{-12pt}

\centerline{\dag Mathematical Sciences Institute, Australian National University,}

\centerline{ Canberra, Australia\vspace{6pt}}

\centerline{\ddag School of Mathematics and Applied Statistics, University of Wollongong,}

\centerline{ Wollongong, Australia}

\vspace{12pt}

\centerline{{\bf Abstract}}
We define pseudo-Riemannian spectral triples,  an
analytic context broad enough to encompass a spectral description
of a wide class of pseudo-Riemannian manifolds,
as well as their noncommutative generalisations. Our main theorem shows that to each
pseudo-Riemannian spectral triple we can associate
a genuine spectral triple, and so a
$K$-homology class. With some additional assumptions we can then apply
the local index theorem. We give a range of examples and some applications. 
The example of the harmonic
oscillator in particular shows that our main theorem applies to 
much more than just classical pseudo-Riemannian manifolds.

\tableofcontents

%

\section{Introduction}
Spectral triples provide a way to extend Riemannian geometry to noncommutative
spaces, retaining the connection to the underlying topology via $K$-homology. 
In this paper we provide a definition
of pseudo-Riemannian spectral triple, enabling a 
noncommutative analogue of pseudo-Riemannian geometry,  
and show that we can still maintain contact with the underlying topology. We do this by 
`Wick rotating' to a spectral triple analogue of our pseudo-Riemannian spectral triple. 
Below we discuss the class of pseudo-Riemannian manifolds which give
examples of our construction: a key point is that we work with Hilbert spaces
not Krein spaces, and so the construction relies on a global splitting of
the tangent bundle into timelike and spacelike sub-bundles.

Our main theorem, Theorem \ref{convert}, states that one can associate a spectral triple
to a pseudo-Riemannian spectral triple via Wick rotation.
Under additional assumptions, the process of Wick rotating is shown to preserve spectral dimension, smoothness and integrability,
as we define them. Thus one obtains a $K$-homology class and the tools to compute index
pairings using the local index formula.
Since the most important Lorentzian manifolds are noncompact, we have taken care to
ensure that our definitions are consistent with the nonunital version of the local index formula, as
proved in \cite{CGRS2}. 

Section \ref{prelim} recalls what we need from the theory of nonunital spectral
triples, while Section \ref{sec:classical} recalls some pseudo-Riemannian geometry, in order to set notation
and provide motivation. Here we also show how certain pseudo-Riemannian spin manifolds provide 
examples for our theory. 

In Section \ref{sRst} we discuss some technicalities
about unbounded operators before presenting our definition of pseudo-Riemannian spectral triples.
We also provide definitions of smoothness and summability,
and a range of examples. An unexpected example
is provided by the harmonic oscillator.

Section  \ref{sRst-st} begins with our main theorem, which shows that we can obtain a spectral
triple from a pseudo-Riemannian spectral triple. We also give a sufficient condition on 
a smoothly summable 
pseudo-Riemannian spectral triple ensuring that the resulting spectral triple is
smoothly summable, so that we can employ the local index formula. 
This sufficient condition is enough for all our examples, except the harmonic oscillator.
The remainder of the section looks at the examples, in particular the oscillator, as well as
one simple non-existence result for
certain kinds of harmonic one forms on compact manifolds.

%
%

The classical examples we have presented all arise by taking a pseudo-Riemannian spin manifold
and generating a 
Riemannian metric: for this to work, we require a global splitting of the tangent
bundle into timelike and spacelike sub-bundles, and that the 
resulting Riemannian metric be 
complete and of bounded geometry.
This can always be achieved in the globally
hyperbolic case when $M_n=\R\times M_{n-1}$ provided that the induced metric on
$M_{n-1}$ is complete and of bounded geometry.

It would be desirable to have a method of producing a spectral triple, and so $K$-homology class, 
associated to a more general pseudo-Riemannian metric, or at least Lorentzian metric. 
The reason we can not 
is that, in the Lorentzian case, the weakest physically reasonable causality condition is stable
causality, and the Lorentzian metrics of such manifolds need not have complete Riemannian
manifolds associated to them by our Wick rotation procedure.

Hence to deal with general Lorentzian manifolds, 
one would have to be able to deal with Riemannian manifolds with boundary, and
even more general objects. 
This requires careful consideration of appropriate boundary conditions, and we refer to \cite{BDT} for
a comprehensive discussion of boundary conditions in $K$-homology and \cite{IL} for 
a definition of `spectral triple with boundary'. We will return to this question in a future work.

As a final comment, we remark that the purpose of this paper is to develop a framework for accessing the topological data associated to a pseudo-Riemannian manifold, commutative or noncommutative. The
details of the geometry and/or physics of such a manifold should be accessed using the pseudo-Riemannian
structure and not the associated Riemannian one.

\medskip

{\bf Acknowledgements}. We would like to thank Christian B\"{a}r, Iain Forsyth, Victor Gayral,
Rainer Verch and Ben Whale for their assistance. We also thank the referee whose comments
have led to improvements in this work. The third author also thanks the
Max Planck Institute, Leipzig, for hospitality during the initial stages of this work. The 
third author acknowledges
the support of the Australian Research Council.

\section{Nonunital spectral triples}\label{prelim}
In this section we will summarise the definitions and results
concerning nonunital spectral triples. Much of this material is from \cite{CGRS2}
where a more detailed account can be found. 

\begin{defn}
A spectral triple $(\A,\HH,\D)$ is given by

1) A representation $\pi:\A\to\B(\HH)$ of a  $*$-algebra $\A$ on the
Hilbert space
$\HH$.

2) A  self-adjoint (unbounded, densely defined)  operator 
$\D:{\rm dom}\,\D\to\HH$
such that
$[\D,\pi(a)]$ extends to a bounded operator on $\HH$ for all $a\in\A$ and
$\pi(a)(1+\D^2)^{-\frac{1}{2}}$ is compact for all $a\in\A$.

The triple is said to be even if there is an operator $\Gamma=\Gamma^*$ such
that
$\Gamma^2=1$, $[\Gamma,\pi(a)]=0$ for all $a\in\A$ and $\Gamma\D+\D\Gamma=0$
(i.e.
$\Gamma$ is a ${\Z}_2$-grading such that $\D$ is odd and $\pi(\A)$ is even.)
Otherwise the
triple is called odd.
\end{defn}

\begin{rem}
We will systematically omit the representation $\pi$ in future.
\end{rem}

Our aim is to define and study generalisations of 
spectral triples which model pseudo-Riemannian manifolds.
The interesting situation in this setting is when the underlying space is noncompact.
In order to discuss summability in this context, we recall a few definitions and results from 
\cite{CGRS2}, where a general definition of summability in the nonunital/noncompact context was
developed.

\begin{defn} Let  $\D$ be a densely defined self-adjoint operator on the Hilbert space $\HH$. 
Then for each $p\geq 1$ and $s>p$ we define a weight $\vf_s$ on $\B(\HH)$ by
$$
\vf_s(T):={\rm Trace}((1+\D^2)^{-s/4}T(1+\D^2)^{-s/4}),\quad 0\leq T\in \B(\HH),
$$
and the subspace $\B_2(\D,p)$ of $\B(\HH)$ by
$$
\B_2(\D,p):=\bigcap_{s>p} \Big({\rm dom}(\vf_s)^{1/2}\bigcap ({\rm dom}(\vf_s)^{1/2})^*\Big).
$$
The norms 
\begin{equation}\label{pn}
\B_2(\D,p)\ni T\mapsto \Q_n(T)
:=\left(\|T\|^2+\varphi_{p+1/n}(|T|^2)+\varphi_{p+1/n}(|T^*|^2)\right)^{1/2},\quad n=1,2,3\dots,
\end{equation}
take finite values on $\B_2(\D,p)$ and provide a topology on $\B_2(\D,p)$ 
stronger than the norm topology. 
We will always suppose that $\B_2(\D,p)$ has the topology defined by these norms.
\end{defn}

The space $\B_2(\D,p)$ is in fact a Fr\'echet algebra, \cite[Proposition 2.6]{CGRS2}, and plays the role of bounded square integrable operators. 
Next we introduce the bounded integrable operators.

On $\B_2(\D,p)^2$, the span of products $TS$, with $T,\,S\in \B_2(\D,p)$, define norms
\begin{equation}
\label{new-norm}
\PP_n(T)
:=\inf\Big\{\sum_{i=1}^k\Q_n( T_{1,i})\,\Q_n(T_{2,i})\ :\  
T=\sum_{i=1}^kT_{1,i}T_{2,i},\ \ \ T_{1,i},\,T_{2,i}\in \B_2(\D,p)\Big\}.
\end{equation}
Here the sums are finite and the infimum runs over all possible such representations of $T$. 
It is shown in \cite[page 13]{CGRS2} that the $\PP_n$ are norms on $\B_2(\D,p)^2$.

\begin{defn}
\label{def:sumpin-like-this}
Let $\mathcal{D}$ be a densely defined self-adjoint 
operator on $\HH$ and $p\geq 1$.
Define  $\B_1(\D,p)$ to be the completion of $\B_2(\D,p)^2$ 
with respect to the  
family of  
norms $\{\PP_n:\,n=1,2,\dots\}$. 
\end{defn}

\begin{defn}\label{def:spec-dim}
A   spectral triple  $(\A,\HH,\D)$, is said to be finitely summable if
there exists $s>0$ such that for all $a\in\A$, $a(1+\D^2)^{-s/2}\in\cl^1(\HH)$.
In such a case, we  let
$$
p:=\inf\big\{s>0\,:\,\forall a\in\A, \,\, {\rm Trace}\big(|a|(1+\D^2)^{-s/2}\big)<\infty
\big\},
$$
and call $p$ the spectral dimension of $(\A,\HH,\D)$.
\end{defn}

\begin{rems}
For the definition of the spectral dimension above to be  meaningful, one needs  two facts.
First, if $\A$ is the algebra of a finitely summable spectral triple,
we have $|a|(1+\D^2)^{-s/2}\in\cl^1(\HH)$ for all $a\in\A$, which follows by 
using the polar decomposition $a=v|a|$ and writing 
$$
|a|(1+\D^2)^{-s/2}=v^*a(1+\D^2)^{-s/2}.
$$
Observe that we are {\em not} asserting that $|a|\in\A$, which is typically not true in examples.
The second fact we require is that
$ {\rm Trace}\big(a(1+\D^2)^{-s/2}\big)\geq0$ for $a\geq 0$, which follows from
\cite[Theorem 3]{Bik}. 
\end{rems}

In contrast to the unital case, checking the finite summability
condition for a nonunital spectral triple can be difficult. This is because our definition
relies on control of the trace norm 
of the non-self-adjoint
operators  $a(1+\D^2)^{-s/2}$, $a\in\A$. It is shown in 
\cite[Propositions 3.16, 3.17]{CGRS2} that for a spectral triple $(\A,\HH,\D)$
to be finitely summable with spectral dimension $p$, it is necessary that $\A\subset \B_1(\D,p)$
and this condition is {\em almost} sufficient as well.

Anticipating the pseudodifferential calculus, we introduce subalgebras 
of $\B_1(\D,p)$ which `see' smoothness as well as summability. There
are several operators naturally associated to our notions of smoothness.

\begin{defn}
\label{parup}
Let $\mathcal{D}$ be a densely defined self-adjoint 
operator on the Hilbert space $\HH$.
Set $\HH_\infty=\bigcap_{k\geq 0} {\rm dom}\,\D^k$. For an 
operator $T\in\B(\HH)$ such that $T:\HH_\infty\to\HH_\infty$
we set
$$
\delta(T):=[(1+\D^2)^{1/2},T] .
$$
In addition, we recursively set   
$$
T^{(n)}:=[\D^2,T^{(n-1)}],\; n=1,2,\dots\quad \mbox{and}\quad T^{(0)}:=T.
$$ 
Finally, let 
\begin{align}
\label{LR}
L(T):=(1+\mathcal{D}^2)^{-1/2}[\mathcal{D}^2,T],\quad R(T):=[\mathcal{D}^2,T](1+\mathcal{D}^2)^{-1/2}.
\end{align}
\end{defn}

It is  proven in \cite{CM,C4} and \cite[Proposition 6.5]{CPRS2} that
\begin{equation}
\bigcap_{n\geq 0}{\rm dom}\,{L}^n
=\bigcap_{n\geq 0}{\rm dom}\,{R}^n\,=\bigcap_{k,\,l\geq 0}{\rm dom}\,L^k\circ R^l=\bigcap_{n\in\N}{\rm dom}\,{\delta}^n.
\label{eq:LR}
\end{equation}
%

To define $\delta^k(T)$ for $T\in\B(\HH)$ we need $T:\HH_l\to\HH_l$ for each $l=1,\dots,k$.
This is necessary before discussing boundedness of $\delta^k(T)$.

\begin{defn} 
\label{def-Bp}
 Let  $\D$ be a densely defined self-adjoint operator on the Hilbert space $\HH$, and $p\geq 1$.
 Then
 define for $k=0,1,2,\dots$
\begin{align*}
\B_1^k(\D,p)
&:=\big\{T\in\B(\HH)\,:\,T:\HH_l\to\HH_l,\ \mbox{and}
\ \  \delta^l(T)\in \B_1(\D,p)\ \ \forall\, l=0,\dots,k\big\}.
\end{align*}
Also set
$$
\B_1^\infty(\D,p):=\bigcap_{k=0}^\infty \B_1^k(\D,p).
$$

We equip $\B_1^k(\D,p)$, $k=0,1,2,\dots,\infty$, 
with the topology determined by the seminorms $\PP_{n,l}$ defined by
$$
\B(\HH)\ni T\mapsto \PP_{n,l}(T)
:=\sum_{j=0}^l\PP_n(\delta^j(T)),
\quad n=1,2,\dots,\quad l=0,\dots,k.
$$
\end{defn}

\begin{defn}
\label{delta-phi}
Let $(\A,\HH,\D)$ be a   spectral triple. Then we say that $(\A,\HH,\D)$ is
$QC^k$ summable if $(\A,\HH,\D)$ is finitely summable with spectral dimension $p$ and 
$$
\A\cup[\D,\A]\subset \B_1^k(\D,p).
$$
We say that $(\A,\HH,\D)$ is smoothly summable if it is $QC^k$ summable for all $k\in\N$ or, equivalently, 
if 
$$
\A\cup[\D,\A]\subset \B_1^\infty(\D,p).
$$
If $(\A,\HH,\D)$ is smoothly summable with spectral dimension $p$,
the $\delta$-$\varphi$-topology on $\A$ is determined by the family of norms 
$$
\A\ni a\mapsto \PP_{n,k}(a)+\PP_{n,k}([\D,a]), \,\,n=1,2,\dots,\,\,k=0,1,2,\dots,
$$ 
where the norms $\PP_{n,k}$ are those of Definition \ref{def-Bp}.
\end{defn}

In \cite{CGRS2} it was shown that the local index formula holds for smoothly summable spectral triples. Examples
show, \cite[page 45]{CGRS2}, that we can have $\A\subset \B_1^\infty(\D,p)$, while $[\D,\A]\not\in \B_1^\infty(\D,p)$. 
These examples show that we need assumptions to control the summability of $[\D,a]$ as well as $a\in\A$.


Next we recall the version of pseudodifferential calculus for nonunital
spectral triples developed in \cite{CGRS2}.

\begin{defn}
\label{op0}
Let  $\D$ be a densely defined self-adjoint operator on the Hilbert space $\HH$ and  $p\geq 1$.
The set of
{\bf  order-$r$ tame pseudodifferential operators} associated with 
$(\HH,\D)$ and $p\geq 1$ is given by
$$
{\rm OP}^r_0(\D):=(1+\D^2)^{r/2}\B_1^\infty(\D,p) ,\quad r\in\mathbb R,\qquad {\rm OP}^*_0(\D)
:=\bigcup_{r\in\R}{\rm OP}^r_0(\D).
$$

We topologise  ${\rm OP}^r_0(\D)$  with the family of norms 
$$
\PP_{n,l}^r(T):=\PP_{n,l}\big((1+\D^2)^{-r/2}T\big), \quad n,l\in\mathbb N,
$$
where the norms $\PP_{n,l}$ are as in Definition \ref{delta-phi}.
\end{defn}

\begin{rem}
To lighten the notation, we do not make explicit the 
important dependence on the real number $p\geq 1$
in the definition of the tame pseudodifferential operators. We also observe that  the definition
of all the spaces $\B_2(\D,p),\,\B_1(\D,p)$ and ${\rm OP}^r(\D)$  depends only on  $(1+\D^2)^{1/2}$.
\end{rem}

Since
$\B_1^\infty(\D,p)$ is {\it a priori} a nonunital algebra, functions of $\D$ alone do not
belong to ${\rm OP}^*_0$. In particular, not all `differential operators',
such as powers of $\D$, are  tame pseudodifferential operators.

\begin{defn}
Let  $\D$ be a densely defined self-adjoint operator on the Hilbert space $\HH$ and $p\geq 1$.
The set of {\bf regular order-$r$ pseudodifferential operators} is
$$
{{\rm OP}^r}(\D):=
(1+\D^2)^{r/2}\Big(\bigcap_{n\in\mathbb N}{\rm dom}\,\delta^n\Big),\quad r\in\mathbb R, 
\qquad {\rm OP}^*(\D):=\bigcup_{r\in\R}{\rm OP}^r(\D).
$$

The natural topology of ${\rm OP}^r(\D)$  is associated with the family of norms
$$
\sum_{k=0}^l\Vert \delta^k((1+\D^2)^{-r/2}T)\Vert, \quad l\in\mathbb N.
$$
\end{defn}

With this definition, 
${\rm OP}^r_0(\D)$ is a Fr\'echet space and both ${\rm OP}^0(\D)$ and ${\rm OP}^0_0(\D)$ are  Fr\'echet $*$-algebras. 
Moreover it is proved in \cite{CGRS2} that
for $r>p$, we have ${\rm OP}^{-r}_0(\D)\subset \cl^1(\HH)$ and that for all $r,\,t\in\mathbb R$ we have 
${\rm OP}^r_0(\D)\,{\rm OP}^t(\D),\ {\rm OP}^t(\D)\,{\rm OP}^r_0(\D)\subset{\rm OP}^{r+t}_0(\D)$. Thus
the tame pseudodifferential operators form an ideal within the regular pseudodifferential operators.

The other main ingredient of the pseudodifferential calculus is the one (complex) parameter group
\begin{equation}
\label{sigma}
\sigma^z(T):=(1+\D^2)^{z/2}\,T\,(1+\D^2)^{-z/2},\quad z\in\C,\ T\in {\rm OP}^*(\D).
\end{equation}
The group $\sigma$ restricts to a strongly continuous one 
parameter group on each ${\rm OP}^r(\D)$ and ${\rm OP}^r_0(\D)$,
and has a Taylor expansion, as first proved in \cite{CM}. 
The following version of the Taylor expansion is adapted to
tame pseudodifferential operators, and can be found in \cite[Proposition 2.35]{CGRS2}.

\begin{prop}
\label{TE}
Let  $\D$ be a densely defined self-adjoint operator on the Hilbert space $\HH$ and $p\geq 1$.
Let $T\in{\rm OP}^r_0(\D)$ and $z=n+1-\alpha$ with $n\in\N$ and $\Re(\alpha)\in(0,1)$. Then we have
$$
\s^{2z}(T)-\sum_{k=0}^nC_k(z)\,(\s^2-{\rm Id})^k(T)\in {\rm OP}^{r-n-1}_0(\D)\quad{\rm with}
\quad C_k(z):=\frac{z(z-1)\cdots(z-k+1)}{k!}.
$$
\end{prop}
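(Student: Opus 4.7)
The claim is a binomial-type Taylor expansion of $\sigma^{2z}$ in powers of $\sigma^2-\mathrm{Id}$, modelled on the scalar identity $(1+x)^z = \sum_k C_k(z)\,x^k$. My strategy combines the order-lowering property of $\sigma^2-\mathrm{Id}$ with a concrete integral representation of the fractional power $(1+\D^2)^{-\alpha}$.

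First I would verify by induction the explicit formula
$$(\sigma^2-\mathrm{Id})^k(T) \;=\; T^{(k)}\,(1+\D^2)^{-k}.$$
For $k=1$ this is $(1+\D^2)T(1+\D^2)^{-1}-T = [\D^2,T](1+\D^2)^{-1}$, and the inductive step uses that $\D^2$ commutes with $(1+\D^2)^{-1}$. Since the iterated bracket $T^{(k)}$ belongs to $\mathrm{OP}^{r+k}_0(\D)$ by stability of the tame calculus under $[\D^2,\cdot]$, the product $T^{(k)}(1+\D^2)^{-k}$ lies in $\mathrm{OP}^{r-k}_0(\D)$. Thus each individual summand of the Taylor polynomial is a tame pseudodifferential operator of order $r-k\le r$, and the content of the proposition is that the remainder actually has order $r-n-1$.

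To access that remainder I would exploit the decomposition $z=(n+1)-\alpha$: split $\sigma^{2z}=\sigma^{2(n+1)}\circ\sigma^{-2\alpha}$, expand the integer-power factor exactly via the finite binomial theorem, and represent the residual factor through the Mellin-type identity
$$(1+\D^2)^{-\alpha} \;=\; \frac{\sin\pi\alpha}{\pi}\int_0^\infty \lambda^{-\alpha}\,(1+\D^2+\lambda)^{-1}\,d\lambda,$$
valid for $\Re(\alpha)\in(0,1)$. Iterating the resolvent commutator identity
$$(1+\D^2+\lambda)^{-1}T \;=\; T(1+\D^2+\lambda)^{-1} + (1+\D^2+\lambda)^{-1}[\D^2,T](1+\D^2+\lambda)^{-1}$$
exactly $n+1$ times produces a polynomial in the $T^{(k)}$'s with resolvent tails, plus a manifest remainder whose order is lowered by $n+1$. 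Integrating each resulting term in $\lambda$ via standard Beta-function identities yields scalar multiples of $T^{(k)}(1+\D^2)^{-k-\alpha}$, and combinatorial bookkeeping identifies the total coefficients with the $C_k(z)$.

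The principal obstacle is exactly this bookkeeping: aligning the binomial coefficients of the integer factor, the Beta-function values produced by the $\lambda$-integrals, and the Pascal-type recursion $C_k(z)=C_k(z-1)+C_{k-1}(z-1)$, so that the double sum collapses into the single sum claimed. A parallel technical point is to control the remainder not merely in operator norm but in the full seminorm family $\PP^{r-n-1}_{n,l}$ defining $\mathrm{OP}^{r-n-1}_0(\D)$; this forces one to commute $\delta^l$ through the resolvents $(1+\D^2+\lambda)^{-1}$, generating further commutator terms whose $\lambda$-integrals must be estimated uniformly. This last step should be routine given the strong continuity of $\sigma$ on $\mathrm{OP}^r_0(\D)$ already recorded in the preceding discussion.
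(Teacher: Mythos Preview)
The paper does not actually supply a proof of this proposition; it quotes the result from \cite[Proposition 2.35]{CGRS2} and refers back to \cite{CM,C4} for the original version. So there is no in-paper argument to compare against, only the cited sources.

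That said, your plan is precisely the standard Connes--Moscovici argument used in those references, and it is sound. The identity $(\sigma^2-\mathrm{Id})^k(T)=T^{(k)}(1+\D^2)^{-k}$ is correct and is exactly how one sees that each Taylor term drops one order in $\mathrm{OP}^*_0$. Splitting $z=(n+1)-\alpha$, expanding the integer part by the finite binomial theorem, and handling the fractional part via the Mellin integral for $(1+\D^2)^{-\alpha}$ with iterated resolvent commutators is the approach taken in \cite{CGRS2}. The combinatorial bookkeeping you flag collapses cleanly once you recognise it as the Vandermonde--Chu identity
\[
\sum_{j+l=k}\binom{n+1}{j}\binom{-\alpha}{l}=\binom{n+1-\alpha}{k}=C_k(z),
\]
so the double sum you obtain after combining the two expansions really does reduce to the single sum in the statement. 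Your remark about pushing $\delta^l$ through the resolvents to control the full $\PP^{r-n-1}_{n,l}$ seminorms is the right diagnosis of the remaining work; in \cite{CGRS2} this is handled exactly as you suggest, using that $\delta$ commutes with functions of $\D$ and that the resulting $\lambda$-integrals converge uniformly in the relevant seminorms. One small caution: the order-lowering statement $(\sigma^2-\mathrm{Id}):\mathrm{OP}^r_0\to\mathrm{OP}^{r-1}_0$ is not entirely free---it requires knowing that $R$ (equivalently $L$) preserves $\B_1^\infty(\D,p)$, which is proved in \cite{CGRS2} but not in the present paper---so you should cite that fact rather than treat it as obvious.
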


We also prove here a lemma, which will be useful to us later on.
\begin{lemma}
\label{lem:OP-invert}
Let $\D$ be a densely defined self-adjoint operator on the 
Hilbert space $\HH$ and $p\geq1$. 
Let $T\in{\rm OP^0}(\D)$ have bounded inverse. Then $T^{-1}\in{\rm OP}^{0}(\D)$. 
\end{lemma}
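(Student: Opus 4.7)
Plan. The proof hinges on the algebraic identity
\[
\delta(T^{-1}) \;=\; -T^{-1}\delta(T)T^{-1},
\]
obtained formally by applying the derivation $\delta = [(1+\D^2)^{1/2},\cdot]$ to the relation $TT^{-1} = I$. Iterating the Leibniz rule produces, for each $n \in \N$, a finite-sum expression for $\delta^n(T^{-1})$ in terms of $T^{-1}$ and $\delta^k(T)$ with $0 \leq k \leq n$. Since $T^{-1}$ is bounded by hypothesis and each $\delta^k(T)$ is bounded because $T \in \mathrm{OP}^0(\D) = \bigcap_n\mathrm{dom}\,\delta^n$, every such $\delta^n(T^{-1})$ is a bounded operator, and therefore $T^{-1} \in \mathrm{OP}^0(\D)$.

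The principal subtlety is to render this formal calculation rigorous, i.e.\ to show that $T^{-1}$ preserves $\HH_\infty$, so that $\delta^n(T^{-1})$ is defined in the first place. The cleanest route I see is via the unitary group $U_t := e^{it(1+\D^2)^{1/2}}$, which preserves $\HH_\infty$ and implements $\delta$ through $\delta(S) = \frac{1}{i}\frac{d}{dt}|_{t=0}\,U_t S U_{-t}$. By $\delta$-smoothness of $T$, the orbit $t \mapsto U_t T U_{-t}$ is $C^\infty$ into $\B(\HH)$, with $n$-th derivative at $t=0$ equal to $i^n\delta^n(T)$. Because $U_t T U_{-t}$ is invertible with $(U_t T U_{-t})^{-1} = U_t T^{-1} U_{-t}$, and inversion is a $C^\infty$ map on the open set of invertibles in $\B(\HH)$, the orbit $t \mapsto U_t T^{-1} U_{-t}$ is also $C^\infty$. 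Differentiating at $t=0$ then yields both that $T^{-1}$ preserves $\HH_\infty$ and that $\delta^n(T^{-1})$ extends to a bounded operator, recovering the Leibniz expression of the first paragraph.

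The main obstacle is establishing the equivalence between the operator-theoretic characterization of $\mathrm{OP}^0(\D)$ via $\delta$ and the orbit-smoothness characterization via the conjugation group $U_t(\cdot)U_{-t}$, in a topology strong enough to transfer smoothness through the inversion map; the group is not norm-continuous on all of $\B(\HH)$, so some care is required. An alternative direct route is an induction on $l$ showing $T^{-1}$ preserves $\HH_l$, by rearranging the commutation relation $(1+\D^2)^{1/2}T = T(1+\D^2)^{1/2} + \delta(T)$ and bootstrapping via spectral cutoffs $E_n$ of $\D$ to justify the resulting identity for $(1+\D^2)^{l/2}T^{-1}\xi$ when $\xi \in \HH_l$. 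Either route reduces the lemma, once the domain issue is resolved, to the routine algebraic identity already displayed.
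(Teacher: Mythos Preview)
Your algebraic core---the identity $\delta(T^{-1}) = -T^{-1}\delta(T)T^{-1}$ and its iteration into a finite sum of products of $T^{-1}$'s and $\delta^{k}(T)$'s---is exactly the paper's proof. The paper writes out precisely this combinatorial expansion,
\[
\delta^n(T^{-1}) = \sum_{1\leq l\leq n}\ \sum_{|k|=n} C_{l,n,k}\, T^{-1}\delta^{k_1}(T)T^{-1}\cdots T^{-1}\delta^{k_l}(T)T^{-1},
\]
and concludes boundedness of $\delta^n(T^{-1})$ from boundedness of each factor, without further comment.

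The paper does \emph{not} address the domain issue you raise (that $T^{-1}$ must preserve $\HH_\infty$ for $\delta(T^{-1})$ even to be defined in the sense of Definition~\ref{parup}); it treats the Leibniz identity as self-evident. Your unitary-group argument---smoothness of $t \mapsto U_t T U_{-t}$ in norm, combined with the fact that inversion is $C^\infty$ on the invertibles of $\B(\HH)$---is a correct and clean way to supply this rigor, and the equivalence you need between $\bigcap_n\dom\,\delta^n$ and norm-$C^\infty$ vectors for the conjugation group is standard. The direct induction on $\HH_l$ you sketch at the end would also work and is closer in spirit to the paper's level of formality. Either way, your extra care is an addition of rigor to the paper's argument rather than a genuinely different route.
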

\begin{proof}
We need to show that 
$\delta^n(T^{-1})$ is bounded for all $n\geq1$. We first check that
$$
\delta(T^{-1}) = -T^{-1} \delta(T) T^{-1} 
$$
is given by a product of bounded operators. Iterating this formula shows 
that there are combinatorial constants $C_{l,n,k}$ such that
\begin{align*}
\delta^n(T^{-1}) 
= \sum_{1\leq l\leq n} \sum_{1\leq k_1,\ldots,k_l\leq n,|k|=n} 
C_{l,n,k} T^{-1} \delta^{k_1}(T) T^{-1} \delta^{k_2} T^{-1} \cdots T^{-1} \delta^{k_l}(T) T^{-1} .
\end{align*}
Since $\delta^k(T)\in{\rm OP}^0(\D)$ is bounded for all $k$, we see that 
$\delta^n(T^{-1})$ is 
bounded for all $n$. Hence $T^{-1}$ is indeed an element of ${\rm OP}^0(\D)$. 
\end{proof}

In \cite{CGRS2} it was shown that Dirac type operators on 
complete  Riemannian manifolds of bounded geometry
give rise to smoothly summable spectral triples, with spectral 
dimension given by the dimension
of the underlying manifold. The bounded geometry 
hypothesis arises from the need to employ heat kernel
techniques to prove finite summability.

The next section explores the kind of analytic data that 
Dirac type operators on suitable pseudo-Riemannian manifolds give rise to. 
We use this section as motivation for the discussion of 
pseudo-Riemannian spectral triples to come. 

%
%
%


\section{Dirac operators on pseudo-Riemannian manifolds}
\label{sec:classical}

Let $(M,g)$ be an $n$-dimensional time- and space-oriented 
pseudo-Riemannian manifold of signature $(t,s)$. 
We will assume that we are given an orthogonal direct sum decomposition of the tangent bundle 
$TM = E_t\oplus E_s$ (with $\dim E_t = t$ and $\dim E_s = s$) such that the metric $g$ is 
negative definite on $E_t$ and positive definite on $E_s$. We shall consider elements of 
$E_t$ to be `purely timelike' and elements of $E_s$ to be `purely spacelike'. We shall 
denote by $T$  the projection $TM\rightarrow E_t$ onto the `purely timelike' subbundle. 
This projection is orthogonal with respect to $g$, which means that $g(v-Tv,Tw) = 0$ 
for all $v,w\in TM$. We then also have a \emph{spacelike reflection} $r := 1-2T$ which acts as 
$(-\1)\oplus\1$ on the decomposition $E_t\oplus E_s$. 

\begin{defn}
Given a direct sum decomposition $TM = E_t\oplus E_s$ (or, 
equivalently, given a spacelike reflection $r$, or a timelike projection $T$), 
we define a new metric $g_E$ on $TM$ by
\begin{align*}
g_E(v,w) &:= g(rv,w) .
\end{align*}
Since $r(1-T) = 1-T$, we readily check that $T$ is also an orthogonal 
projection with respect to the new metric $g_E$. Furthermore, $g_E$ is 
positive definite, and hence $(M,g_E)$ is a Riemannian manifold. Alternatively, we can also write
\begin{align*}
g(v,w) = g_E(rv,w) = g_E\big((1-T)v,(1-T)w\big) - g_E(Tv,Tw) .
\end{align*}
\end{defn}

\subsection{The Clifford algebras}

Let $\Cliff(TM,g_E)$ denote the real Clifford algebra with respect to $g_E$, and $\Cliff(TM,g)$ 
the real Clifford algebra with respect to $g$. Their complexifications are equal (since a complex 
Clifford algebra is independent of the signature of the chosen metric) and will be denoted 
$\CCliff(TM)$. Let the inclusion $TM \hookrightarrow \Cliff(TM,g_E)\subset\CCliff(TM)$ be 
denoted by $\gamma_E$. Our conventions are such that $\gamma_E(v)^*=-\gamma_E(v)$ 
and $\gamma_E(v)\gamma_E(w)+\gamma_E(w)\gamma_E(v) = -2g_E(v,w)$. We can now 
define a linear map $\gamma \colon TM \rightarrow \CCliff(TM)$ by
\begin{align}
\label{eq:defn_Cliff}
\gamma(v) := \gamma_E(v-Tv) - i \gamma_E(Tv) .
\end{align}
Since $T$ is a projection, we have $g_E(v-Tv,Tv) = 0$. We then verify that 
$\gamma(v)^2 = -g(v,v)$, since 
\begin{align*}
\gamma(v)^2 &= \gamma_E(v-Tv)^2 - \gamma_E(Tv)^2 - i\gamma_E(v-Tv)\gamma_E(Tv) 
- i\gamma_E(Tv)\gamma_E(v-Tv) \\
&= - g_E(v-Tv,v-Tv) + g_E(Tv,Tv) + 2i g_E(v-Tv,Tv) = -g(v,v) .
\end{align*}
Note that $(1-T-iT)^2 = r$. An alternative calculation for the complex-linear continuation 
$\gamma\colon TM\otimes\C\rightarrow\CCliff(TM)$ and the complexified metric then yields the same result:
\begin{align*}
\gamma(v)^2 = \gamma_E\big((1-T-iT)v\big)^2 = - g_E\big((1-T-iT)v,(1-T-iT)v\big) = - g_E(rv,v) = - g(v,v) .
\end{align*}
By the universal properties of Clifford algebras, this ensures that $\gamma$ extends to an algebra 
homomorphism $\gamma\colon \Cliff(TM,g)\rightarrow\CCliff(TM)$. Furthermore, by calculating
\begin{multline*}
\gamma(v)\gamma(w)^* = -\gamma_E(v-Tv)\gamma_E(w-Tw) - \gamma_E(Tv)\gamma_E(Tw) \\
- i\gamma_E(v-Tv)\gamma_E(Tw) + i\gamma_E(Tv)\gamma_E(w-Tw) ,
\end{multline*}
we find that $\gamma$ also satisfies 
\begin{align}
\label{eq:tilgamma_g}
\gamma(v)\gamma(w)^* + \gamma(w)^*\gamma(v) &= 2g_E(v,w) 
\end{align}
 with respect to the Riemannian metric.

\subsection{Dirac operators}

Take a local pseudo-orthonormal basis $\{e_i\}_{i=1}^{n}$ of $TM$ such that $\{e_i\}_{i=1}^{t}$ 
is a basis for $E_t$, and $\{e_i\}_{i=t+1}^{n}$ is a basis for $E_s$, i.e.
\begin{align*}
g(e_i,e_j) &= \delta_{ij}\kappa(j) , & \kappa(j) &= 
\begin{cases}-1 & j=1,\ldots,t;\\ 1 & j=t+1,\ldots,n. \end{cases}
\end{align*}
The basis $\{e_i\}_{i=1}^{n}$ is then automatically also an orthonormal basis with respect to $g_E$. 
We shall denote by 
$h$ the map $T^*M\rightarrow TM$ which maps
$\alpha\in T^*M$ to its dual in $TM$ with respect to the metric 
$g$. 
That is:
\begin{align*}
h(\alpha) &= v , & \Longleftrightarrow & & \alpha(w) &= g(v,w) \quad\text{for all } w\in TM .
\end{align*}
Let $\{\theta^i\}_{i=1}^n$ be the basis of $T^*M$ dual to $\{e_i\}_{i=1}^n$, so that 
$\theta^i(e_j) = \delta^i_j$. We then see that 
$h(\theta^i) = \kappa(i)e_i$. 

Fix a bundle $S\rightarrow M$, such that $\CCliff(TM) \simeq \End(S)$. We shall 
now discuss the construction of the standard Dirac operator on $\Gamma(S)$. Let us denote 
by 
$c$ 
the 
pseudo-Riemannian 
Clifford multiplication $\Gamma(T^*M\otimes S)\rightarrow \Gamma(S)$ given by 
\begin{align*}
c(\alpha\otimes\psi) &:= \gamma\big(h(\alpha)\big)\psi .
\end{align*}
Let $\nabla$ be the Levi-Civita connection for the pseudo-Riemannian metric $g$, and let $\nabla^S$ be its lift to the spinor bundle. 
We shall consider the Dirac operator on $\Gamma(S)$ defined as the composition
\begin{align*}
\sD &\colon \Gamma(S) \xrightarrow{\nabla^S} \Gamma(T^*M\otimes S) \xrightarrow{c} \Gamma(S) , &
\sD &:= c \circ \nabla^S = \sum_{j=1}^n \kappa(j) \gamma(e_j) \nabla^S_{e_j} .
\end{align*}
%
The pseudo-Riemannian Dirac operator $\sD$ is closeable, but it will not be symmetric, and not even normal! However, in the next section we will employ the notion of a Krein space to show that this operator will (under suitable assumptions) be Krein-self-adjoint with respect to an indefinite scalar product. 

\subsection{Krein spaces}
\label{subsec:Krein}

Let us recall the definition and some basic properties of Krein spaces from \cite[page 100]{Bog}.

\begin{defn}[{\cite[\S V.1]{Bog}}]
A \emph{Krein space} is a vector space $V$ with an indefinite inner product 
$(\cdot,\cdot)$ that admits a fundamental decomposition of the form 
$V=V^+\oplus V^-$ into a positive-definite subspace $V^+$ and a 
negative-definite subspace $V^-$, where $V^+$ and $V^-$ are 
intrinsically complete, i.e.\ complete with respect to the norms $\Vert v\Vert_{V^\pm} := |(v,v)|^{1/2}$. 
\end{defn}

Let $P^\pm$ denote the projections on $V^\pm$. Then the operator 
$\mJ := P^+ - P^-$ is self-adjoint and unitary, and defines a positive-definite 
inner product $(\cdot,\cdot)_\mJ := (\cdot,\mJ\cdot)$. Such an operator $\mJ$ 
(which depends on the choice of fundamental decomposition) 
is called a \emph{fundamental symmetry} of the Krein space $V$. 

A decomposable, non-degenerate inner product space $V$ is a Krein space 
if and only if, for every fundamental symmetry $\mJ$, the inner 
product $(\cdot,\cdot)_\mJ$ turns $V$ into a Hilbert space. 

Let $T$ be a linear operator with dense domain in the Krein space $V$. 
Then the Krein-adjoint $T^+$ of $T$ is defined on the domain
\begin{align*}
\dom T^+ := \{v\in V \mid \exists w\in V \colon \forall u\in\dom T \colon (Tu,v) = (u,w) \} ,
\end{align*}
and we define $T^+v := w$. Let $\mJ$ be a fundamental symmetry of 
$V$, and let $T^*$ be the adjoint of $T$ with respect to the Hilbert space 
inner product $(\cdot,\cdot)_\mJ$. We then have the relation $T^+ = \mJ T^*\mJ$. 
An operator $T$ is Krein-self-adjoint if and only if $\mJ T$ and $T\mJ$ are 
self-adjoint with respect to $(\cdot,\cdot)_\mJ$. 

\subsubsection{The Krein space of spinors}

We shall now briefly describe the Krein space of spinors on a pseudo-Riemannian spin manifold $(M,g)$. For more details we refer to \cite[\S3.3]{Bau}.

Recall the spacelike 
reflection $r$ which acts as $(-\1)\oplus\1$ on the decomposition $E_t\oplus E_s$ of 
the tangent bundle $TM$. Let $\Gamma_c(S)$ be the space of compactly supported smooth sections on the spinor bundle $S\to M$. There exists a positive-definite hermitian product (depending on the decomposition $E_t\oplus E_s$)
$$
(\cdot,\cdot)\colon \Gamma_c(S)\times\Gamma_c(S)\to C^\infty_c(M) ,
$$
which gives rise to the inner product
\begin{align*}
\la\psi_1,\psi_2\ra &:= \int_M (\psi_1,\psi_2) \nu_g , & \text{for } \psi_1,\psi_2\in\Gamma_c(S) ,
\end{align*}
where $\nu_g$ is the canonical volume form of $(M,g)$. 
The completion of $\Gamma_c(S)$ with respect to this inner product is denoted $L^2(M,S)$. 

As before, we have a (pseudo)-orthonormal basis 
$\{e_i\}_{i=1}^n$ such that $\{e_i\}_{i=1}^t$ is a basis for $E_t$ and $\{e_i\}_{i=t+1}^n$ 
is a basis for $E_s$. We shall define an operator $\mJ_M$ on $L^2(M,S)$ by
\begin{align*}
\mJ_M := i^{t(t-1)/2} \gamma(e_1)\cdots\gamma(e_t) .
\end{align*}
This operator $\mJ_M$ is a self-adjoint unitary operator, i.e.\ $\mJ_M^* = \mJ_M$ and 
$\mJ_M^2 = 1$. Furthermore, for the spacelike reflection $r$ as above and for a 
vector $v\in TM$, we have the relation
\begin{align}
\label{eq:spin_reflection}
\mJ_M\gamma(v)\mJ_M = (-1)^t \gamma(rv) .
\end{align}
We can now define an indefinite hermitian product
\begin{align*}
(\cdot,\cdot)_{\mJ_M} := (\mJ_M\cdot,\cdot) .
\end{align*}
Then $L^2(M,S)$ with indefinite inner product 
$
\la\cdot,\cdot\ra_{\mJ_M} := \int_M (\cdot,\cdot)_{\mJ_M} \nu_g
$
is a \emph{Krein space} with \emph{fundamental symmetry} $\mJ_M$. 

Let us quote a few facts from \cite[\S3.3]{Bau}. 
Using the spacelike reflection $r$ we obtain a new connection $\nabla^r := r\circ\nabla\circ r$, and its spin lift is given by $\nabla^{r,S} = \mJ_M \nabla^S \mJ_M$. 
For the Dirac operator $\sD$ we denote its (formal) Hilbert space adjoint (with respect to $\la\cdot,\cdot\ra$) as $\sD^*$ and its (formal) Krein space adjoint (with respect to $\la\cdot,\cdot\ra_{\mJ_M}$) as $\sD^+$. 
We shall define its real and imaginary parts as $\Re \sD := \frac12 (\sD+\sD^*)$ and $\Im \sD := -\frac i2 (\sD-\sD^*)$. 

\begin{prop}[{\cite[Satz 3.17]{Bau}}]
The formal adjoints $\sD^*$ and $\sD^+$ are locally of the form
\begin{align*}
\sD^+ &= (-1)^t \sD , & 
\sD^* &= \sum_{j=1}^n \gamma(e_j) \nabla^{r,S}_{e_j} , 
\end{align*}
where $(e_1,\ldots,e_n)$ is a local (pseudo)-orthonormal frame. 
\end{prop}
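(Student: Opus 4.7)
The plan is to establish the Hilbert-space adjoint $\sD^*$ first, then deduce the Krein-adjoint $\sD^+$ from the general relation $\sD^+ = \mJ_M \sD^* \mJ_M$ valid for any fundamental symmetry. Two pointwise ingredients are crucial. First, from $\gamma(v) = \gamma_E(v-Tv) - i\gamma_E(Tv)$ and the skew-adjointness $\gamma_E(v)^* = -\gamma_E(v)$, a short computation gives $\gamma(v)^* = -\gamma(rv)$ with respect to the positive-definite fiber product $(\cdot,\cdot)$; specialised to a pseudo-orthonormal frame this reads $\gamma(e_j)^* = -\kappa(j)\gamma(e_j)$, since $r e_j = \kappa(j) e_j$. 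Second, the Krein compatibility $X(\psi_1,\psi_2)_{\mJ_M} = (\nabla^S_X\psi_1,\psi_2)_{\mJ_M} + (\psi_1,\nabla^S_X\psi_2)_{\mJ_M}$ translates, via $(\cdot,\cdot) = (\mJ_M\cdot,\cdot)_{\mJ_M}$ together with $\nabla^{r,S} = \mJ_M\nabla^S\mJ_M$, into the mixed Leibniz rule $X(\psi_1,\psi_2) = (\nabla^S_X\psi_1,\psi_2) + (\psi_1,\nabla^{r,S}_X\psi_2)$ for the positive-definite product.

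For $\sD^*$, I pair with $\psi_2\in\Gamma_c(S)$ and compute
\begin{align*}
\la\sD\psi_1,\psi_2\ra = \sum_j \kappa(j) \int_M (\gamma(e_j)\nabla^S_{e_j}\psi_1,\psi_2)\,\nu_g = -\sum_j \int_M (\nabla^S_{e_j}\psi_1,\gamma(e_j)\psi_2)\,\nu_g ,
\end{align*}
using $\gamma(e_j)^* = -\kappa(j)\gamma(e_j)$ and $\kappa(j)^2=1$. Applying the mixed Leibniz rule followed by the divergence theorem $\int_M e_j(f)\nu_g = -\int_M f\,\Div_g(e_j)\,\nu_g$, and using the Clifford-compatibility identity $\nabla^{r,S}_{e_j}(\gamma(e_j)\psi_2) = \gamma(\nabla^r_{e_j}e_j)\psi_2 + \gamma(e_j)\nabla^{r,S}_{e_j}\psi_2$ (which follows from $\nabla^{r,S}=\mJ_M\nabla^S\mJ_M$ combined with \eqref{eq:spin_reflection}), one rearranges to
\begin{align*}
\la\sD\psi_1,\psi_2\ra = \int_M \Bigl(\psi_1,\sum_j\bigl[\Div_g(e_j)\gamma(e_j) + \gamma(\nabla^r_{e_j}e_j)\bigr]\psi_2\Bigr)\nu_g + \int_M \Bigl(\psi_1,\sum_j \gamma(e_j)\nabla^{r,S}_{e_j}\psi_2\Bigr)\nu_g .
\end{align*}
The crux is to show the first integrand vanishes pointwise. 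Expanding in the pseudo-orthonormal frame yields $\Div_g(e_j) = \sum_k \Gamma^k_{kj}$ and $\nabla^r_{e_j}e_j = \kappa(j)\kappa(k)\Gamma^k_{jj}e_k$, so after relabelling the claim reduces to $\Gamma^k_{kj} + \kappa(j)\kappa(k)\Gamma^j_{kk} = 0$ for each pair $(j,k)$; this is precisely the metric-compatibility relation $\Gamma^k_{ij} = -\kappa(j)\kappa(k)\Gamma^j_{ik}$ of the Levi-Civita connection applied to the indices $(i,j,k)=(k,j,k)$. Hence only the second integral survives, giving $\sD^* = \sum_j \gamma(e_j)\nabla^{r,S}_{e_j}$.

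Finally, I obtain $\sD^+ = \mJ_M \sD^* \mJ_M$ by inserting $\mJ_M^2 = 1$ between each factor and applying \eqref{eq:spin_reflection} and the defining relation of $\nabla^{r,S}$: $\mJ_M\gamma(e_j)\mJ_M = (-1)^t\gamma(re_j) = (-1)^t\kappa(j)\gamma(e_j)$ and $\mJ_M\nabla^{r,S}_{e_j}\mJ_M = \nabla^S_{e_j}$, whence $\sD^+ = (-1)^t\sum_j \kappa(j)\gamma(e_j)\nabla^S_{e_j} = (-1)^t\sD$. The only genuine obstacle is the pointwise cancellation of the two divergence-type terms, and it rests entirely on carefully book-keeping the sign factors $\kappa(j)$ in the Christoffel symmetries of a pseudo-orthonormal frame; the remainder is a formal manipulation that could equivalently be read off from Baum's treatment.
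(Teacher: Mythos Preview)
The paper does not prove this proposition; it is quoted verbatim from Baum's monograph \cite[Satz~3.17]{Bau} and used as a black box. Your proposal therefore cannot be compared to ``the paper's own proof'', but it does supply a correct direct verification in the spirit of Baum's original argument.

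Your outline is sound and the key steps check out: the pointwise identity $\gamma(v)^* = -\gamma(rv)$ follows from the definition \eqref{eq:defn_Cliff} and the skew-adjointness of $\gamma_E$; the mixed Leibniz rule $X(\psi_1,\psi_2) = (\nabla^S_X\psi_1,\psi_2) + (\psi_1,\nabla^{r,S}_X\psi_2)$ follows from Krein-compatibility of $\nabla^S$ together with $(\cdot,\cdot) = (\mJ_M\cdot,\cdot)_{\mJ_M}$ and the operator identity $\nabla^S_X\circ\mJ_M = \mJ_M\circ\nabla^{r,S}_X$ (note that Hermitian symmetry of $(\cdot,\cdot)$ makes the two possible placements of $\nabla^S$ versus $\nabla^{r,S}$ equivalent); the Clifford compatibility $\nabla^{r,S}_X(\gamma(Y)\psi) = \gamma(\nabla^r_XY)\psi + \gamma(Y)\nabla^{r,S}_X\psi$ is a short conjugation computation using \eqref{eq:spin_reflection}; and the divergence cancellation $\Gamma^k_{kj} + \kappa(j)\kappa(k)\Gamma^j_{kk} = 0$ is exactly the metric-compatibility antisymmetry $\kappa(k)\Gamma^k_{ij} + \kappa(j)\Gamma^j_{ik} = 0$ specialised to $i=k$. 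The passage to $\sD^+ = \mJ_M\sD^*\mJ_M$ is then immediate from \eqref{eq:spin_reflection} and $\mJ_M\nabla^{r,S}\mJ_M = \nabla^S$. One small presentational point: you might make explicit that $\nabla^{r,S} = \mJ_M\nabla^S\mJ_M$ is being read as an equality of first-order differential operators on sections, so that the commutation $\nabla^S_X\mJ_M = \mJ_M\nabla^{r,S}_X$ holds without any separate parallelism assumption on $\mJ_M$.
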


\begin{prop}[{\cite[Satz 3.19]{Bau}}]
\label{prop:Baum}
Let $(M,g)$ be an $n$-dimensional time- and space-oriented 
pseudo-Riemannian spin manifold of signature $(t,s)$. 
Let $r$ be a spacelike reflection, such that the associated Riemannian metric $g_E$ is complete. 
Consider the Dirac operator $\sD := c\circ\nabla^S$. Then 
\begin{enumerate}
\item the operators $\Re\sD$ and $\Im\sD$ are essentially self-adjoint with respect to $\la\cdot,\cdot\ra$; and 
\item the operator $i^t\sD$ is essentially Krein-self-adjoint with respect to $\la\cdot,\cdot\ra_{\mJ_M}$.
\end{enumerate}
\end{prop}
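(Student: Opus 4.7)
The plan is to reduce all three essential self-adjointness statements to Chernoff's theorem for first-order symmetric differential operators on complete Riemannian manifolds with uniformly bounded propagation speed, working throughout on the complete Riemannian manifold $(M,g_E)$.

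For part (1), I would observe that $\Re\sD$ and $\Im\sD$ are formally symmetric on $\Gamma_c(S)$ by construction, and are first-order differential operators thanks to the local form $\sD^* = \sum_j \gamma(e_j)\nabla^{r,S}_{e_j}$ supplied by the previous proposition. Their principal symbols are linear combinations of $\gamma(h(\xi))$ and its Hilbert adjoint; the definition \eqref{eq:defn_Cliff} together with $\gamma_E(v)^2 = -g_E(v,v)$ shows that $\|\gamma(v)\| \leq C|v|_{g_E}$ with a constant $C$ uniform in $x\in M$. This bounds the propagation speed uniformly on $(M,g_E)$. Since $g_E$ is complete, Chernoff's essential self-adjointness theorem applies directly and yields (1).

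For part (2), I would first verify formal Krein-self-adjointness. The Krein adjoint is conjugate-linear in scalars, so using $\sD^+ = (-1)^t\sD$ one computes
\begin{align*}
(i^t\sD)^+ = \overline{i^t}\,\sD^+ = (-i)^t(-1)^t\sD = i^t\sD,
\end{align*}
where the last equality rests on $(-i)^t(-1)^t = i^t$. Next, since $\mJ_M$ is a bounded unitary involution and $T^+ = \mJ_M T^*\mJ_M$, the standard identities $\overline{\mJ_M T} = \mJ_M\overline{T}$ and $(\mJ_M T)^* = T^*\mJ_M$ combine to give the equivalence: a densely defined operator $T$ is essentially Krein-self-adjoint if and only if $\mJ_M T$ is essentially Hilbert-self-adjoint. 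It therefore suffices to prove that $i^t\mJ_M\sD$ is essentially self-adjoint with respect to $\la\cdot,\cdot\ra$. This is a first-order, formally symmetric differential operator on $(M,g_E)$ (symmetry coming from the formal Krein-self-adjointness just verified), and its principal symbol $i^t\mJ_M\gamma(h(\cdot))$ is pointwise bounded by the Clifford estimate together with $\|\mJ_M\| = 1$. A second application of Chernoff's theorem then yields (2).

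The main technical point, and essentially the only nontrivial analytic content, is the uniform boundedness of $\|\gamma(v)\|$ in terms of $|v|_{g_E}$. This is exactly the step where the hypothesis of a global splitting $TM = E_t\oplus E_s$ is used, and where completeness of $g_E$ becomes essential in order to invoke Chernoff's theorem. Granted those bounds, the result follows by a routine application of the Chernoff--Wolf framework.
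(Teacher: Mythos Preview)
The paper does not supply its own proof of this proposition; it is quoted verbatim from Baum's monograph \cite[Satz 3.19]{Bau} and used as a black box. Your sketch via Chernoff's theorem is the standard modern route to such results and is correct in outline: the reduction to first-order symmetric operators on the complete Riemannian manifold $(M,g_E)$, together with the uniform symbol bound $\|\gamma(v)\|\leq C|v|_{g_E}$ coming from \eqref{eq:defn_Cliff} and the orthogonality of $Tv$ and $(1-T)v$, is precisely what Chernoff requires. Your handling of part (2)---reducing essential Krein-self-adjointness of $i^t\sD$ to essential Hilbert-self-adjointness of $i^t\mJ_M\sD$ via the bounded involution $\mJ_M$, and then applying Chernoff again---is also correct and is essentially how Baum proceeds. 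One small point worth making explicit in a full write-up: the principal symbol of $\sD^*$ involves the $g_E$-dual of $\xi$ rather than the $g$-dual $h(\xi)$ (this is visible from the local formula $\sD^*=\sum_j\gamma(e_j)\nabla^{r,S}_{e_j}$), but since the two duals differ by the $g_E$-isometry $r$, the same norm bound applies and your argument goes through unchanged.
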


In fact the sum 
$$
\sD_E := \Re\sD + \Im\sD
$$
is also essentially self-adjoint 
self-adjoint (see Corollary \ref{cor:D_E-sa}), so to a Dirac operator $\sD$ we can associate a self-adjoint operator $\sD_E$. Its square is given by 
\begin{align*}
\sD_E^2 &= \big(\Re\sD\big)^2 + \big(\Im\sD\big)^2 + \big(\Re\sD\big)\big(\Im\sD\big)+\big(\Im\sD\big)\big(\Re\sD\big) 
= \frac12\left(\sD\sD^*+\sD^*\sD\right) - \frac i2\left(\sD^2-\sD^{*2}\right) .
\end{align*}
We shall write this as $\sD_E^2 = \la\sD\ra^2 - \pertsD$, where we define
\begin{align*}
\la\sD\ra^2 &:= \frac12\left(\sD\sD^*+\sD^*\sD\right) , & \pertsD &:= \frac i2\left(\sD^2-\sD^{*2}\right) .
\end{align*}

\begin{rem}
We will refer to the map $\sD\mapsto\sD_E$ as \emph{Wick rotation} of the Dirac operator. Let us justify this terminology by considering the basic example of a product space-time $M = \R\times N$, where $\R$ represents time and $N$ is an $n-1$-dimensional Riemannian manifold with metric $g_N$. Consider the pseudo-Riemannian metric on $M$ given by $g_M(t,\vec x) := \diag(-f(t),g_N(\vec x))$ for a strictly positive function $f\colon\R\to\R^+$. Let $\nabla$ be the Levi-Civita connection associated to $g_M$. Then the product-form of $g_M$ implies that $\nabla^r=\nabla$, and hence that the Dirac operator $\sD = \sum_{j=1}^n \kappa(j) \gamma(e_j) \nabla^S_{e_j}$ yields the Wick rotation $\sD_E = \sum_{j=1}^n \gamma_E(e_j) \nabla^S_{e_j}$. In this case, the map $\sD\mapsto\sD_E$ is therefore simply achieved through replacing $\kappa(j)\gamma(e_j)$ by $\gamma_E(e_j)$, which indeed resembles a Wick rotation of the time-coordinate. 
\end{rem}

\begin{lemma}
\label{lemma:sa}
Let $\sD$ be as in Proposition \ref{prop:Baum}.  
Then the operator $\la\sD\ra^2$ defined on the domain $\Dom \la\sD\ra^2 := \Dom \sD\sD^*\cap\Dom\sD^*\sD$ is essentially self-adjoint, elliptic and commutes with the fundamental symmetry $\mJ_M$. Hence it is also essentially Krein-self-adjoint. 
\end{lemma}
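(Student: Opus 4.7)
The strategy is to exploit the decomposition $\la\sD\ra^2 = (\Re\sD)^2 + (\Im\sD)^2$, which follows directly from $\Re\sD = \tfrac12(\sD+\sD^*)$ and $\Im\sD = -\tfrac{i}{2}(\sD-\sD^*)$, together with the completeness of $(M,g_E)$ granted by Proposition \ref{prop:Baum}.

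The first step is to verify ellipticity by computing the principal symbol. Writing $v := h(\alpha)$ and using $\gamma(v) = \gamma_E(v-Tv) - i\gamma_E(Tv)$ together with $\gamma_E(u)^* = -\gamma_E(u)$, one finds the Hermitian principal symbols $\sigma_1(\Re\sD)(\alpha) = -i\gamma_E(Tv)$ and $\sigma_1(\Im\sD)(\alpha) = -i\gamma_E(v-Tv)$. Squaring these and using $\gamma_E(u)^2 = -g_E(u,u)$ together with the $g_E$-orthogonality of $T$, the principal symbol of $\la\sD\ra^2$ comes out as $g_E(h(\alpha),h(\alpha))$. Since the spacelike reflection $r$ is a $g_E$-isometry and $h = r\circ h_E$ (where $h_E$ is the $g_E$-musical isomorphism), this equals $g_E^*(\alpha,\alpha)$. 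Hence $\la\sD\ra^2$ is a scalar-principal-symbol generalized Laplacian on the spinor bundle over the Riemannian manifold $(M,g_E)$.

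For essential self-adjointness, observe that on $\Gamma_c(S)$ the operator $\la\sD\ra^2 = (\Re\sD)^2 + (\Im\sD)^2$ is symmetric and non-negative. Since $(M,g_E)$ is complete, the standard PDE result for Laplace-type operators on complete Riemannian manifolds (a variant of Chernoff's theorem) yields essential self-adjointness on $\Gamma_c(S)$. The stated domain $\Dom\sD\sD^*\cap\Dom\sD^*\sD$ contains $\Gamma_c(S)$ and is a symmetric extension of $\la\sD\ra^2|_{\Gamma_c(S)}$; since any symmetric extension of an essentially self-adjoint operator has the same unique self-adjoint closure, $\la\sD\ra^2$ is also essentially self-adjoint on this larger domain.

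Finally, combining $\sD^+ = (-1)^t\sD$ from the preceding proposition with $\sD^+ = \mJ_M\sD^*\mJ_M$ yields $\mJ_M\sD\mJ_M = (-1)^t\sD^*$ and $\mJ_M\sD^*\mJ_M = (-1)^t\sD$; conjugation by $\mJ_M$ therefore interchanges $\sD\sD^*$ and $\sD^*\sD$ and so fixes their sum. Since the self-adjoint closure of $\la\sD\ra^2$ commutes with $\mJ_M$, its Krein adjoint equals itself, giving essential Krein-self-adjointness. The principal obstacle is the essential self-adjointness step: $(\Re\sD)^2$ and $(\Im\sD)^2$ are individually essentially self-adjoint by applying Chernoff to $\Re\sD$ and $\Im\sD$, but their sum is not automatically so, and one genuinely needs the Laplace-type plus complete-manifold input from PDE theory rather than a soft functional-analytic argument.
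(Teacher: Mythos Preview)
Your argument is correct and reaches the same conclusions, but the route differs from the paper's in a notable way. The paper packages $\sD$ and $\sD^*$ into a single first-order operator $T=(\sD,\sD^*)\colon L^2(M,S)\to L^2(M,S)\oplus L^2(M,S)$, so that $T^*T=2\la\sD\ra^2$; ellipticity of $T^*T$ (hence of $T$) is read off directly from the identity $\gamma(v)\gamma(w)^*+\gamma(w)^*\gamma(v)=2g_E(v,w)$, and essential self-adjointness then follows from a result of Braverman--Milatovic--Shubin on operators of the form $T^*T$ for elliptic $T$ over complete manifolds. Your approach instead decomposes $\la\sD\ra^2=(\Re\sD)^2+(\Im\sD)^2$, computes the two principal symbols separately, and then invokes a Chernoff-type theorem for Laplace-type operators on complete Riemannian manifolds. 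Both are legitimate; the paper's $T^*T$ device is slightly slicker because it sidesteps the issue you yourself flag (that a sum of essentially self-adjoint operators need not be essentially self-adjoint) by realising $2\la\sD\ra^2$ as a single $T^*T$ and appealing to a result tailored to that form, whereas your route relies on the reader accepting the general Laplace-type statement. The commutation with $\mJ_M$ and the Krein-self-adjointness are handled identically in both proofs. One minor caution: your explicit formulas for $\sigma_1(\Re\sD)$ and $\sigma_1(\Im\sD)$ are sensitive to the sign convention for principal symbols, and as written the timelike and spacelike pieces may be swapped; this does not affect the conclusion since only the sum of squares matters.
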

\begin{proof}
Consider 
$$
T:=\left(\sD,\sD^*\right):\,\dom\sD\cap\dom\sD^*\subset L^2(M,S)\to L^2(M,S)\oplus L^2(M,S).
$$
Then the operator 
$$
T^*T=\overline{\sD\sD^*+\sD^*\sD} = 2\overline{\la\sD\ra^2}
$$
has principal symbol $2g_E(\xi,\xi)$ (which easily follows using \eqref{eq:tilgamma_g}) and is therefore elliptic. Hence $T$ must also be elliptic, and then $T^*T$ is densely defined and essentially self-adjoint by \cite[Corollary 2.10]{BrMSh} (noting that the metric $g^{TM}$ defined therein is equal to $2g_E$, which is complete by assumption). By Krein-selfadjointness of $i^t\sD$ (Proposition \ref{prop:Baum}) we know that $\sD^* = (-1)^t \mJ_M \sD \mJ_M$, from which it easily follows that $\big[\la\sD\ra^2,\mJ_M\big] = 0$. 
\end{proof}

\subsection{The triple of a pseudo-Riemannian manifold}

To a Riemannian spin manifold $(M,g_E)$ one associates the spectral triple 
$\big(C_c^\infty(M),L^2(M,S), \sD\big)$,\footnote{Note that in fact, one should 
consider here the closure $\bar\sD$ of $\sD$, but to simplify notation we will 
just write $\sD$ for its closure as well.} where $\sD$ is the standard Dirac operator corresponding to the Riemannian metric $g_E$. 
In a similar manner, we associate to the pseudo-Riemannian manifold $(M,g)$ 
the triple $\big(C_c^\infty(M), L^2(M,S), \sD\big)$, where now $\sD$ is the standard Dirac operator defined with respect to the pseudo-Riemannian metric $g$, as considered above. 
The following few results derive
some basic properties of this triple which will motivate our abstract definition of a pseudo-Riemannian 
spectral triple.

To show that we obtain a spectral triple using the operator $\D_E$, we need to assume that the
Riemannian metric $g_E$ is well-behaved. 
Recall that  the injectivity radius  $r_{\rm inj}\in[0,\infty)$ of the Riemannian manifold $(M,g_E)$ 
is defined as
$$
r_{\rm inj}:=\inf_{x\in M}\sup\{r_x>0\},
$$
where $r_x$ is such that the exponential map  $\exp_x$ (defined w.r.t.\ the Riemannian metric $g_E$) is a 
diffeomorphism from  the ball $B(0,r_x)\subset T_xM$ to
an open neighborhood $U_{x}$  of $x\in M$. We observe that there is a related notion of injectivity radius
for Lorentzian manifolds which is adapted to the pair of metrics $(g,g_E)$; see \cite{CLF}.

\begin{defn}
\label{BG}
Let $(M,g)$ be an $n$-dimensional time- and space-oriented pseudo-Riemannian spin manifold of signature $(t,s)$. Let $r$ be a spacelike reflection such that $(M,g_E)$ is complete. We say that $(M,g,r)$ has bounded geometry if $(M,g_E)$ has strictly positive injectivity radius, and all the covariant derivatives of the (pseudo-Riemannian) curvature tensor of $(M,g)$ are bounded (w.r.t.\ $g_E$) on $M$.
A Dirac bundle on $M$ is said to have bounded geometry if in addition all the  covariant derivatives  
of $\Omega^S$, the curvature tensor of the connection $\nabla^S$, are bounded (w.r.t $g_E$) on $M$. 
For brevity, we simply say that $(M,g,r,S)$ has bounded geometry.
\end{defn}
A differential operator is said to have \emph{uniform $C^\infty$-bounded coefficients}, if for any atlas consisting of charts of normal coordinates, the derivatives of all order of the coefficients are bounded on the chart domain and the bounds are uniform on the atlas. It is shown in \cite[Propositions 5.4 \& 5.5]{R} that the assumption of bounded geometry is equivalent to the existence of a \emph{good coordinate ball}, that is a ball $B$ with center $0$ in $\R^n$ which is the domain of a normal coordinate system at every point of $M$, such that the Christoffel symbols of $\nabla$ and $\nabla^S$ lie in a bounded subset of the Fr\'echet space $C^\infty(B)$. Thus bounded geometry implies that the Dirac operator $\sD$ has uniform $C^\infty$-bounded coefficients. 

\begin{prop}
\label{prop:need-this}
Let $(M,g)$ be an $n$-dimensional time- and space-oriented pseudo-Riemannian spin manifold of signature $(t,s)$. 
Let $r$ be a spacelike reflection such that $(M,g_E)$ is complete and $(M,g,r,S)$ has bounded geometry. 
Then the triple $\big(C_c^\infty(M), L^2(M,S), \sD\big)$ satisfies 
\begin{itemize}
\item[1)] $\dom\langle\sD\rangle^2 := \dom\sD\sD^*\cap\dom\sD^*\sD$ is dense in 
$L^2(M,S)$ and $\langle\sD\rangle^2:=\frac12(\sD\sD^*+\sD^*\sD)$ is essentially self-adjoint on this domain;
\item[2a)] $(\sD^2-\sD^{*2})\in {\rm OP}^1(\langle\sD\rangle)$ and 
$[\langle\sD\rangle^2,\sD^2-\sD^{*2}]\in {\rm OP}^2(\langle\sD\rangle)$; 
\item[2b)] $a(\sD^2-\sD^{*2})(1+\langle\sD\rangle^2)^{-1}$ is compact for all $a\in C_c^\infty(M)$;
\item[3)] $[\sD,a]$ and $[\sD^*,a]$ extend to 
bounded operators on $\HH$ for all $a\in C_c^\infty(M)$ (in particular, 
all $a\in C_c^\infty(M)$ preserve $\dom\sD$, $\dom\sD^*$);
\item[4)] $a(1+\langle\sD\rangle^2)^{-1/2}\in\K\big(L^2(M,S)\big)$ for all $a\in C_c^\infty(M)$.
\end{itemize}
\end{prop}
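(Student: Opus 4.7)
The plan is to dispatch items (1) and (3) directly, then reduce (2a), (2b), and (4) to a principal-symbol computation combined with the pseudodifferential calculus on manifolds of bounded geometry.

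Item (1) is immediate from Lemma \ref{lemma:sa}, which already establishes essential self-adjointness of $\langle\sD\rangle^2$ on $\dom\sD\sD^*\cap\dom\sD^*\sD$; density of this domain is automatic since $\Gamma_c^\infty(S)$ sits inside it. For item (3), $\sD$ and $\sD^*$ are first-order local differential operators, so for $a\in C_c^\infty(M)$ a direct calculation gives $[\sD,a]=c(da\otimes\,\cdot\,)$ and an analogous Clifford multiplication expression involving $\nabla^{r,S}$ for $[\sD^*,a]$; both are bounded, and writing $\sD(a\psi)=a\sD\psi+[\sD,a]\psi$ distributionally shows that $a$ preserves $\dom\sD$ (and similarly $\dom\sD^*$).

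The next step is a principal-symbol computation. Using the local formula of Proposition \ref{prop:Baum} one reads off $\sigma_{\sD}(\xi)=\gamma(h(\xi))$ and $\sigma_{\sD^*}(\xi)=\gamma\big(\sum_j\xi_j e_j\big)$. Via the Clifford relation $\gamma(v)^2=-g(v,v)$, both squares collapse to the same scalar symbol $-g^{-1}(\xi,\xi)$; hence $\sigma_{\sD^2}=\sigma_{\sD^{*2}}$ and $\sD^2-\sD^{*2}$ is (at most) a first-order differential operator. A parallel computation using \eqref{eq:tilgamma_g} shows that $\langle\sD\rangle^2$ has scalar principal symbol proportional to $g_E^{-1}(\xi,\xi)$, confirming the ellipticity already exploited in Lemma \ref{lemma:sa}.

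With these two facts, items (2a), (2b), and (4) follow from the pseudodifferential calculus for manifolds of bounded geometry. By \cite[Propositions 5.4 \& 5.5]{R}, the bounded-geometry hypothesis gives uniform $C^\infty$-bounded coefficients for $\sD$ and $\sD^*$, so $(1+\langle\sD\rangle^2)^{z/2}$ is a pseudodifferential operator of order $z$ with uniform symbol estimates. Consequently $(1+\langle\sD\rangle^2)^{-1/2}(\sD^2-\sD^{*2})$ is a bounded order-$0$ operator, and iterating $\delta=[(1+\langle\sD\rangle^2)^{1/2},\,\cdot\,]$ preserves order because $(1+\langle\sD\rangle^2)^{1/2}$ has scalar principal symbol; this gives the first inclusion of (2a). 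The same argument, together with the fact that $[\langle\sD\rangle^2,\,\cdot\,]$ raises order by one on symbols with scalar leading part, yields the second inclusion of (2a). For (2b), $(\sD^2-\sD^{*2})(1+\langle\sD\rangle^2)^{-1}$ has order $-1$, and multiplication by the compactly supported $a$ promotes this to a compact operator by local Rellich--Kondrachov. The same Rellich--Kondrachov argument settles (4).

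The hard part I anticipate is the upgrade from \emph{formal differential order} to genuine membership in the spaces ${\rm OP}^r(\langle\sD\rangle)$ of Definition \ref{def-Bp}: that membership requires uniform control of \emph{all} iterated $\delta$-commutators, i.e.\ uniform $L^\infty$-bounds on arbitrary covariant derivatives of the (sub)principal symbols, globally on $M$. This is precisely where the hypothesis that all covariant derivatives of the pseudo-Riemannian curvature tensor and of $\Omega^S$ be bounded becomes indispensable, and in practice one argues via a parametrix constructed in the good coordinate balls supplied by Definition \ref{BG}.
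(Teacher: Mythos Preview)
Your proposal is correct and tracks the paper's proof closely: items (1) and (4) via Lemma \ref{lemma:sa}, item (3) by first-orderness of $\sD,\sD^*$, and items (2a)--(2b) by showing $\sD^2-\sD^{*2}$ has differential order one and then invoking ellipticity of $\langle\sD\rangle^2$ together with the bounded-geometry pseudodifferential calculus.

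The one genuine point of difference is \emph{how} you establish that $\sD^2-\sD^{*2}$ is first order. You compute $\sigma_{\sD}$ and $\sigma_{\sD^*}$ separately and square each to get the common scalar $-g^{-1}(\xi,\xi)$. The paper instead exploits the Krein relation $\sD^*=(-1)^t\mJ_M\sD\mJ_M$ from Proposition \ref{prop:Baum} to write $\sD^2-\sD^{*2}=[\sD^2,\mJ_M]\mJ_M$; since $\mJ_M$ is a bundle endomorphism and the principal symbol of $\sD^2$ is scalar, this commutator is automatically of order at most one. The paper's argument is slightly slicker because it never needs the explicit form of $\sigma_{\sD^*}$, only that $\sD^{*2}$ is a zeroth-order conjugate of $\sD^2$; your direct computation buys you a concrete formula for the symbol but requires the local expression for $\sD^*$ involving $\nabla^{r,S}$. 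Both routes are perfectly valid and lead to the same conclusion with the same subsequent appeal to bounded geometry for the uniform estimates in ${\rm OP}^r(\langle\sD\rangle)$.
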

\begin{proof}
From Lemma \ref{lemma:sa} we know that $\la\sD\ra^2$ is essentially self-adjoint, which proves 1), and elliptic, which proves 4).
Since $\sD,\,\sD^*$ are first order differential operators, 3) is immediate.
%
%
%
Consider the operator $\pertsD:=\frac i2(\sD^2-\sD^{*2})$, initially defined on the dense subset of compactly supported smooth sections $\Gamma_c(M,S)$. Since $\sD^{*2} = \mJ_M\sD^2\mJ_M$ by Proposition \ref{prop:Baum}, we see that $\sD^2-\sD^{*2} = [\sD^2,\mJ_M]\mJ_M$. As $\sD^2$ is a second-order differential operator whose principal symbol commutes with $\mJ_M$, we see that $\pertsD$ is a first-order differential operator. Hence $\pertsD (1+\la\sD\ra^2)^{-1/2}$ is a bounded operator. 
Ellipticity of $\la\sD\ra^2$ then implies 2b). 
Under the assumption of bounded geometry, $\pertsD (1+\la\sD\ra^2)^{-1/2}$ is in fact a smooth uniformly bounded operator with uniformly bounded covariant derivatives, which implies 2a). 
\end{proof}

\begin{rem}
So far we have only used the boundedness of the coefficients of $\sD$ and the
ellipticity of $\langle \sD\rangle$. Later we will use information from the bounded geometry hypothesis
to deduce summability properties for $\langle \sD\rangle$.
\end{rem}

\section{Pseudo-Riemannian spectral triples}\label{sRst}

\subsection{Preliminaries on unbounded operators}

Our definition of pseudo-Riemannian spectral triple relies on some 
results about unbounded operators. Here we recall some standard facts, and use these to prove some 
perturbation results that we will require.

We observe that questions about non-self-adjoint unbounded operators
often  revolve around the issue of `symmetric versus self-adjoint', and have a flavour of boundary value problems.

Here we will aim to control all `symmetric versus self-adjoint' issues, focusing instead on the 
`algebraic' lack of symmetry that we saw was typical of Dirac operators on pseudo-Riemannian manifolds.
Naturally we must make fairly strong assumptions to ensure that `symmetric versus self-adjoint' issues
do not interfere with the purely algebraic aspects. 

First recall that if $\D:\mbox{dom}\,\D\to\HH$ is a 
densely defined closed operator, then both $\D\D^*$ and $\D^*\D$ are densely defined 
(with obvious domains), self-adjoint and positive. 
The operator $\la\D\ra^2:=\frac{1}{2}(\D\D^*+\D^*\D)$ is defined on $\mbox{dom}\,\D^*\D\cap\mbox{dom}\,\D\D^*$, 
and when this is dense, $\la\D\ra^2$ is a densely defined symmetric operator. 
The symmetric operator $\la \D\ra^2$ is positive, since it is the operator associated with the quadratic form
$$
q(\phi,\psi)=\la\D\phi,\D\psi\ra+\la\D^*\phi,\D^*\psi\ra,\ \ \ \ \phi,\,\psi\in 
\mbox{dom}\,\D\,\cap\,\mbox{dom}\,\D^*.
$$
Then, by \cite[Theorem X.23]{RS}, $q$ is a closeable form, and its closure 
is associated with a unique positive operator which we denote by 
$\la\D\ra^2_F$, the Friedrich's extension. 

By considering instead $\frac{1}{2}(\D\D^*+\D^*\D)+1$, and applying \cite[Theorem X.26]{RS}, 
we see that $\frac{1}{2}(\D\D^*+\D^*\D)$ is essentially self-adjoint if and only if the 
Friedrich's extension is the only self-adjoint extension, which occurs if and only if the range 
of $\frac{1}{2}(\D\D^*+\D^*\D)+1$ is dense. The key assumption we shall make to avoid `symmetric versus self-adjoint' issues is 
that $\D\D^*+\D^*\D$ is essentially self-adjoint.

\begin{defn}
\label{lem:symm}
Given  $\D:{\rm dom}\,\D\to\HH$  a densely defined closed unbounded operator with  
${\rm dom}\,\D\,\cap\,{\rm dom}\,\D^*$ dense in $\HH$, we define the `Wick rotated' operator
$$
\D_E=\frac{e^{i\pi/4}}{\sqrt{2}}(\D-i\D^*)=\frac12(\D+\D^*)+\frac i2(\D-\D^*)
$$ 
with initial domain ${\rm dom}\,\D\,\cap\,{\rm dom}\,\D^*$. Then
$\D_E$ is symmetric and so closeable. On ${\rm dom}\,\D_E^2$ we have 
$$ 
\D_E^2=\frac{1}{2}(\D\D^*+\D^*\D)+\frac{i}{2}(\D^2-\D^{*2})=:\la\D\ra^2+\pert,
$$
where we write $\pert=\frac{i}{2}(\D^2-\D^{*2})$.
\end{defn}

%



The  next lemma appears as \cite[exercise 28, Chapter X]{RS} 
(see also the proof of \cite[Lemma 3]{B}).

\begin{lemma}
\label{lem:sqrt-self-adj}
Suppose that $T$ is a symmetric operator on the Hilbert space $\HH$ with ${\rm dom}\,T^2$ dense in $\HH$. If
$T^2$ is essentially self-adjoint on ${\rm dom}\,T^2$ then $T$ is essentially self-adjoint.
\end{lemma}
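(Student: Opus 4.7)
The plan is to use the standard deficiency-indices criterion: $T$ (symmetric) is essentially self-adjoint iff $\ker(T^*\mp i)=\{0\}$. So I would fix $\phi\in\ker(T^*\mp i)$ and aim to show $\phi=0$ by leveraging the essential self-adjointness of $T^2$.

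First I would observe that $T^2$, as a symmetric operator on $\dom T^2$, is non-negative: for $\psi\in\dom T^2$ one has $\psi\in\dom T$ and $T\psi\in\dom T\subseteq\dom T^*$, so $\langle T^2\psi,\psi\rangle=\langle T\psi,T^*\psi\rangle=\langle T\psi,T\psi\rangle\geq 0$ (using $T\subseteq T^*$). Because $T^2$ is assumed essentially self-adjoint, its closure $\overline{T^2}=(T^2)^*$ is a non-negative self-adjoint operator, so its spectrum lies in $[0,\infty)$; in particular $-1$ is not an eigenvalue.

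Now suppose $\phi\in\dom T^*$ with $T^*\phi=\pm i\phi$. Then $\phi\in\dom T^*$ and $T^*\phi\in\dom T^*$, so $\phi\in\dom T^{*2}$ with $T^{*2}\phi=-\phi$. The central computation is to show that $\phi$ lies in $\dom(T^2)^*$ with $(T^2)^*\phi=-\phi$. For any $\psi\in\dom T^2$ we have $\psi,T\psi\in\dom T\subseteq\dom T^*$, so
\begin{align*}
\langle T^2\psi,\phi\rangle=\langle T(T\psi),\phi\rangle=\langle T\psi,T^*\phi\rangle=\langle T\psi,\pm i\phi\rangle=\mp i\langle\psi,T^*\phi\rangle=\mp i\cdot(\pm i)\langle\psi,\phi\rangle=\langle\psi,-\phi\rangle.
\end{align*}
This shows $\phi\in\dom(T^2)^*$ and $(T^2)^*\phi=-\phi$, contradicting non-negativity of $(T^2)^*=\overline{T^2}$ unless $\phi=0$. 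Hence both deficiency subspaces are trivial and $T$ is essentially self-adjoint.

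The main subtlety — not really an obstacle, but the only thing to be careful about — is the bookkeeping of unbounded adjoints: one must verify that $\phi$ actually lies in $\dom(T^2)^*$ (the adjoint of $T^2$ viewed with domain $\dom T^2$) rather than merely in $\dom T^{*2}$, since a priori $T^2\subseteq T^{*2}$ gives $(T^{*2})^*\subseteq(T^2)^*$ in the wrong direction. The displayed calculation above handles this directly via the defining property of the adjoint, and uses only the inclusion $T\subseteq T^*$ at each step, so no additional hypotheses are needed.
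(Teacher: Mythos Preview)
Your argument is correct and is the standard one. The paper does not actually supply its own proof of this lemma; it simply records that the result is \cite[Chapter~X, exercise~28]{RS} and points to \cite[Lemma~3]{B} for a related argument. Your deficiency-index approach---showing that any $\phi$ with $T^*\phi=\pm i\phi$ satisfies $(T^2)^*\phi=-\phi$, which is impossible because $\overline{T^2}=(T^2)^*$ is non-negative---is precisely the intended solution to that exercise.

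One minor slip in the displayed chain: the penultimate factor should read $\mp i\cdot(\mp i)=-1$ rather than $\mp i\cdot(\pm i)=1$ (or, more cleanly, pass directly to $\langle\psi,(T^*)^2\phi\rangle=\langle\psi,(\pm i)^2\phi\rangle$); your final conclusion $\langle\psi,-\phi\rangle$ is correct regardless.
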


Our main technical tool for passing from pseudo-Riemannian spectral triples to spectral triples
is the commutator theorem, \cite[Theorem X.36]{RS}. We restate a slightly weaker version of this result using
the language of pseudodifferential operators from Section \ref{prelim}.

\begin{thm} 
\label{thm:RS}
Let $N\geq 1$ be a positive self-adjoint operator on the Hilbert space $\HH$. If $A\in {\rm OP}^2(N^{1/2})$ is closed and symmetric and
furthermore $[N,A]\in{\rm OP}^2(N^{1/2})$ then

1) ${\rm dom}\,N\subset {\rm dom}\,N+A$ and there exists a constant $C>0$ such that for all $\xi\in {\rm dom}\,N$
$$
\Vert (N+A)\xi\Vert\leq C\,\Vert N\xi\Vert;
$$
2) the operator $N+A$ is essentially self-adjoint on any core for $N$.
\end{thm}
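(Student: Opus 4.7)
The plan is to reduce both parts to the Nelson commutator theorem \cite[Theorem X.36]{RS}, whose two standard hypotheses---relative $N$-boundedness of $A$ and a form bound on $[N,A]$---are precisely what we must extract from the pseudodifferential assumptions $A,\,[N,A]\in\mathrm{OP}^2(N^{1/2})$. The leverage is that $\mathrm{OP}^2(N^{1/2})=(1+N)\cdot\mathrm{OP}^0(N^{1/2})$ and that $\mathrm{OP}^0(N^{1/2})=\bigcap_n\dom\delta^n$ consists of bounded operators preserving every $\HH_l=\dom N^{l/2}$.

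For part 1), write $A=(1+N)B$ with $B\in\mathrm{OP}^0(N^{1/2})$. Then $B$ preserves $\dom N$, so $\dom N\subset\dom A\subset\dom(N+A)$. The identity $(1+N)B=B(1+N)+[N,B]$ together with $[N,B]=(1+N)^{1/2}\delta(B)+\delta(B)(1+N)^{1/2}\in\mathrm{OP}^1(N^{1/2})$ gives
$$
A(1+N)^{-1}=B+[N,B](1+N)^{-1}\in\mathrm{OP}^0(N^{1/2}),
$$
which is therefore bounded. For $\xi\in\dom N$ this yields
$$
\|A\xi\|\leq \|A(1+N)^{-1}\|\,\|(1+N)\xi\|\leq C(\|N\xi\|+\|\xi\|)\leq 2C\|N\xi\|,
$$
using $N\geq1$ (which implies $\|\xi\|\leq\|N\xi\|$ on $\dom N$). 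The triangle inequality then gives $\|(N+A)\xi\|\leq(1+2C)\|N\xi\|$.

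For part 2), the operator $N+A$ is symmetric on $\dom N$ (since $N$ is self-adjoint and $A$ is symmetric with $\dom N\subset\dom A$), and $[N,N+A]=[N,A]$. Writing the hypothesis as $[N,A]=(1+N)B_1$ with $B_1\in\mathrm{OP}^0(N^{1/2})$, the $\sigma$-group of \eqref{sigma}---which preserves $\mathrm{OP}^0(N^{1/2})$---gives
$$
(1+N)^{-1/2}[N,A](1+N)^{-1/2}=(1+N)^{1/2}B_1(1+N)^{-1/2}=\sigma^1(B_1)\in\mathrm{OP}^0(N^{1/2}),
$$
which is bounded. Hence for $\xi\in\dom N$,
$$
|\langle[N,A]\xi,\xi\rangle|\leq \|\sigma^1(B_1)\|\cdot\|(1+N)^{1/2}\xi\|^2\leq 2\|\sigma^1(B_1)\|\cdot\|N^{1/2}\xi\|^2,
$$
using $\|(1+N)^{1/2}\xi\|^2=\|\xi\|^2+\|N^{1/2}\xi\|^2\leq 2\|N^{1/2}\xi\|^2$ from $N\geq1$. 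Both hypotheses of \cite[Theorem X.36]{RS} are then satisfied, and the commutator theorem delivers essential self-adjointness of $N+A$ on any core for $N$.

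The principal technical subtlety is the symmetric sandwiching in the form bound: the direct bound $\|[N,A]\xi\|\leq C'\|(1+N)\xi\|$ given by $[N,A]\in\mathrm{OP}^2$ is too weak to yield a form bound of order $\|N^{1/2}\xi\|^2$, and the missing half-power is recovered by using the $\sigma$-calculus to trade one full power of $(1+N)$ for two half-powers acting symmetrically on each side of the inner product.
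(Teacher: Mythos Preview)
Your proof is correct and follows exactly the approach the paper intends: the theorem is stated in the paper as a restatement of \cite[Theorem X.36]{RS} in pseudodifferential language, with no proof given, and you have supplied precisely the translation showing that $A,\,[N,A]\in{\rm OP}^2(N^{1/2})$ imply the two hypotheses (relative $N$-bound and commutator form bound) of the Reed--Simon commutator theorem. The one cosmetic point is that the expression $\langle [N,A]\xi,\xi\rangle$ should be read as $\langle A\xi,N\xi\rangle-\langle N\xi,A\xi\rangle$ on a core such as $\HH_\infty$, where all manipulations are justified; otherwise the argument is complete.
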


\subsection{The definition and immediate consequences}\label{subsec:defs}

The following definitions are intended to give an analytic framework
for
dealing with the non-normal unbounded operators arising in
pseudo-Riemannian
geometry. We have endeavoured to make this framework as general as
possible, and it is easy to see from Proposition \ref{prop:need-this} that
the triples we constructed from (suitable) pseudo-Riemannian manifolds 
satisfy this definition.
After presenting definitions of smoothness and  summability 
by
analogy with spectral triples, we give a  broad range of examples.

Recall that given a closed densely defined operator $\D:{\rm dom}\,\D\subset\HH\to\HH$, we define 
$\la\D\ra^2=\frac{1}{2}(\D\D^*+\D^*\D)$ and $\pert=\frac{i}{2}(\D^2-\D^{*2})$
when defined. We also set $\HH_\infty=\cap_{k\geq 0}\,{\rm \dom}\,\la\D\ra^k$.

\begin{defn}\label{no-beta-defn} A pseudo-Riemannian spectral triple
$(\A,\HH,\D)$ consists of  a $*$-algebra $\A$ 
represented on the Hilbert space $\HH$
as bounded operators, along with a densely defined closed operator 
$\D:{\rm dom}\,\D\subset\HH\to\HH$ such that

1) ${\rm dom}\,\D^*\D\cap{\rm dom}\,\D\D^*$ is dense in $\HH$ 
 and $\la\D\ra^2$ is essentially self-adjoint on this domain;
 
 2a) $\pert:\HH_\infty\to\HH_\infty$, and  $[\la\D\ra^2,\pert]\in {\rm OP}^2(\la\D\ra)$;
 
2b) 
$a\pert(1+\la\D\ra^2)^{-1}$ is compact for all $a\in\A$;

3)  $[\D,a]$ and $[\D^*,a]$ extend to  bounded operators on $\HH$ for all $a\in\A$ (in 
particular, all $a\in\A$ preserve ${\rm dom}\,\D$, ${\rm dom}\,\D^*$);

4) $a(1+\la\D\ra^2)^{-1/2}\in\K(\HH)$ for all $a\in\A$;

The pseudo-Riemannian triple is said to be even if there exists 
$\Gamma\in\B(\HH)$
such that $\Gamma=\Gamma^*$, $\Gamma^2=1$, $\Gamma a=a\Gamma$ for all $a\in\A$
and $\Gamma\D+\D\Gamma=0$. Otherwise the pseudo-Riemannian triple is said to be
odd.
\end{defn}

The definition implies that the operator $\pert$ is in fact in ${\rm OP}^2(\la\D\ra)$.

\begin{lemma}
\label{lem:bound_A}
Let $(\A,\HH,\D)$ be a pseudo-Riemannian spectral triple. Then
$$
(1+\la\D\ra^2)^{-1/2}\pert(1+\la\D\ra^2)^{-1/2}\in {\rm OP}^0(\la\D\ra).
$$
Hence $\pert\in {\rm OP}^2(\la\D\ra)$.
\end{lemma}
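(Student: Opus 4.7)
The plan is to prove boundedness of $M := (1+\la\D\ra^2)^{-1/2}\pert(1+\la\D\ra^2)^{-1/2}$, bootstrap to smoothness under the $\delta$-calculus, and finally upgrade to $\pert\in{\rm OP}^2$ via multiplicativity.

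\textbf{Boundedness.} First I would decompose $\D = S + iT$ with symmetric parts $S := \tfrac12(\D+\D^*)$ and $T := \tfrac1{2i}(\D-\D^*)$. A direct computation on $\HH_\infty$ shows $\la\D\ra^2 = S^2+T^2$ and $\pert = -(ST+TS)$. Cauchy--Schwarz together with AM--GM then yields the form bound
\begin{displaymath}
|\la\pert\xi,\xi\ra| = 2|\mathrm{Re}\la S\xi,T\xi\ra| \leq \|S\xi\|^2+\|T\xi\|^2 = \la\la\D\ra^2\xi,\xi\ra \leq \|N\xi\|^2
\end{displaymath}
for all $\xi\in\HH_\infty$, where $N := (1+\la\D\ra^2)^{1/2}$. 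Since $\pert$ is symmetric on $\HH_\infty$, the Hermitian polarization identity together with the rescaling $\xi\mapsto\lambda\xi$, $\eta\mapsto\lambda^{-1}\eta$ upgrades this diagonal bound to $|\la\pert\xi,\eta\ra|\leq 2\|N\xi\|\|N\eta\|$. Hence $M$, a priori defined only on the dense subspace $\HH_\infty$, extends to a bounded operator on $\HH$.

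\textbf{Smoothness and the ``hence''.} By assumption 2a), $S_0 := N^{-2}[N^2,\pert]$ lies in $\bigcap_n \dom\,\delta^n$. Using that $N^{-1}$ commutes with $N^2$, a direct computation on $\HH_\infty$ gives
\begin{displaymath}
L(M) = N^{-1}[N^2,M] = N^{-2}[N^2,\pert]N^{-1} = S_0 N^{-1}.
\end{displaymath}
Iterating the same identity $[N^2,N^{-1}]=0$, a one-line induction yields $L^k(M) = L^{k-1}(S_0)N^{-1}$ for every $k\geq 1$, which is bounded because $S_0$ is smooth and $N^{-1}$ is bounded. By the equality of domains in \eqref{eq:LR}, this places $M$ in $\bigcap_n\dom\,\delta^n = {\rm OP}^0(\la\D\ra)$. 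The ``hence'' follows from the multiplicative property ${\rm OP}^r(\la\D\ra)\cdot{\rm OP}^s(\la\D\ra)\subseteq{\rm OP}^{r+s}(\la\D\ra)$ of the regular pseudo-differential calculus (a Leibniz-rule argument for $\delta$): since $N\in{\rm OP}^1(\la\D\ra)$, writing $\pert = NMN$ gives $\pert\in{\rm OP}^1\cdot{\rm OP}^0\cdot{\rm OP}^1\subseteq{\rm OP}^2(\la\D\ra)$.

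\textbf{Main obstacle.} The algebra is essentially straightforward once the key insight is identified (namely, that the symmetry of $\pert$ relative to $\la\D\ra^2$ coming from the factorisation $\pert=-\{S,T\}$ gives a two-sided form bound by $N^2$). The delicate point is justifying that the formal commutator manipulations carried out on $\HH_\infty$ extend densely to the claimed bounded operators on $\HH$. This requires $\HH_\infty$ to be a common invariant core for $\pert$, $N^2$, and their products, which is precisely what the condition $\pert\colon\HH_\infty\to\HH_\infty$ in assumption 2a) is designed to guarantee; the polarization step then has to be handled carefully to turn the quadratic bound into a sesquilinear one.
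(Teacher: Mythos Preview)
Your proof is correct. The smoothness argument and the final ``hence'' are essentially identical to the paper's: both compute $L^k(M)$ by exploiting that $N^{-1}$ commutes with $N^2$, reduce to the smoothness of $N^{-2}[\la\D\ra^2,\pert]$ guaranteed by condition 2a), and invoke \eqref{eq:LR}.

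The boundedness step, however, takes a genuinely different route. The paper argues directly at the operator-norm level: since $0\leq\D^*\D\leq 2(1+\la\D\ra^2)$ one has $\|\D N^{-1}\|\leq\sqrt 2$ and likewise $\|\D^* N^{-1}\|\leq\sqrt 2$, whence $\|N^{-1}\D^2 N^{-1}\|\leq 2$ and $\|N^{-1}\D^{*2}N^{-1}\|\leq 2$, giving $\|M\|\leq 2$ immediately by the triangle inequality. Your route via the decomposition $\D=S+iT$, the identities $\la\D\ra^2=S^2+T^2$ and $\pert=-\{S,T\}$, and the form bound $|\la\pert\xi,\xi\ra|\leq\la\la\D\ra^2\xi,\xi\ra$ is equally valid and in fact yields the sharper bound $\|M\|\leq 1$ (you need not pass through polarization at all: since $M$ is symmetric on the dense invariant core $\HH_\infty$, its numerical radius already controls its norm). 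The paper's argument has the minor advantage of never needing to check that $S$ and $T$ are individually well-defined on $\HH_\infty$; yours makes the algebraic structure underlying the Wick rotation more transparent.
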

\begin{proof}
The operators $\D\D^*$ and $\D^*\D$ are positive 
and bounded by $1+\la\D\ra^2$, from which it follows that
$$
\|\D(1+\la\D\ra^2)^{-1/2}\|^2 = \|(1+\la\D\ra^2)^{-1/2}\D^*\D(1+\la\D\ra^2)^{-1/2}\| \leq 2 .\\
$$
Hence $\D$ (and similarly $\D^*$) is bounded by $(1+\la\D\ra^2)^{1/2}$. 
Thus for $\pert = \frac i2 (\D^2-\D^{*2})$ we obtain
\begin{multline*}
\|(1+\la\D\ra^2)^{-1/2}\pert(1+\la\D\ra^2)^{-1/2}\| \\
\leq \frac12 \|(1+\la\D\ra^2)^{-1/2}\D^2(1+\la\D\ra^2)^{-1/2}\| + \frac12 \|(1+\la\D\ra^2)^{-1/2}\D^{*2}(1+\la\D\ra^2)^{-1/2}\| 
\leq 2 .
\end{multline*}
Applying $L_{\la\D\ra}=(1+\la\D\ra^2)^{-1/2}[\la\D\ra^2,\cdot]$ repeatedly, using Equation \eqref{eq:LR}, 
and recalling that $\pert^{(n)}=[\la\D\ra^2,\pert^{(n-1)}]\in {\rm OP}^{n+1}(\la\D\ra)$ for all
$n\geq 1$ from part 2a) of Definition \ref{no-beta-defn}, we see that
$$
L^n_{\la\D\ra}((1+\la\D\ra^2)^{-1/2}\pert(1+\la\D\ra^2)^{-1/2})
=(1+\la\D\ra^2)^{-1/2}L^n_{\la\D\ra}(\pert)(1+\la\D\ra^2)^{-1/2}
$$
is bounded for all $n\geq 1$. 
\end{proof}

\begin{rems} 
In Definition \ref{no-beta-defn}, both parts of condition $2)$  
are intended to force $\D^2-\D^{*2}$ to be a 
`first order operator', regarding $\la\D\ra^2$ as second order. 
There are two things to control here: the order
of the `differential operators' appearing in $\D^2-\D^{*2}$, 
and the growth of the `coefficients'. All of these
quotation marks can be understood quite literally in the 
classical case described in the last section. 

If we were to restrict attention, in the classical case, 
to differential operators with bounded coefficients, 
we would expect the easiest assumptions to force 
$\D^2-\D^{*2}$ to be first order would be 
\begin{equation}
\D^2-\D^{*2}\in {\rm OP}^1(\la\D\ra)
\label{eq:easy-assumption1}
\end{equation}
and
\begin{equation}
a(\D^2-\D^{*2})\in {\rm OP}^1_0(\la\D\ra)\ \ \mbox{for all }a\in\A.
\label{eq:easy-assumption2}
\end{equation}
In fact, Equation \eqref{eq:easy-assumption1} actually implies $2a)$, 
since ${\rm OP}^1(\la\D\ra)\subset {\rm OP}^2(\la\D\ra)$, 
while Equation \eqref{eq:easy-assumption2} together with $4)$
implies  $2b)$. 

The reason for weakening the assumptions so that {\em a priori} 
$\pert=\frac{i}{2}(\D^2-\D^{*2})\in {\rm OP}^2(\la\D\ra)$
only,  is to 
allow for unbounded coefficients and also to allow 
for non-smooth elements in our algebra. 
(For instance the condition \eqref{eq:easy-assumption2} forces $a\in\A$ to be smooth). 
The  harmonic oscillator, treated in detail later, is an example of where unbounded coefficients occur.

Ultimately we will restrict attention to smoothly summable 
pseudo-Riemannian spectral triples, but
we would like to have a definition with minimal smoothness requirements. 
We have found numerous not-quite-equivalent ways of 
specifying that $\D^2-\D^{*2}$ and/or $a(\D^2-\D^{*2})$
are order one operators, each with slightly different 
(smoothness) hypotheses. All these different approaches allow us to prove that we can
obtain a spectral triple from a pseudo-Riemannian spectral triple, though
the methods differ in each case. For instance, some of the other possible
assumptions allow  the use of the Kato-Rellich theorem 
in  place of the commutator theorem we employ here.
\end{rems}


\begin{cor}
\label{lem:self-adj2} 
Let $(\A,\HH,\D)$ be a pseudo-Riemannian spectral triple. Then the 
operator $\la\D\ra^2+\frac{i}{2}(\D^2-\D^{*2})$ is essentially self-adjoint on any core for $\la\D\ra^2$,
and ${\rm dom}\,\la\D\ra^2\subset{\rm dom}\,\la\D\ra^2+\frac{i}{2}(\D^2-\D^{*2})$.
Moreover there is $C>0$ such that for all $\xi\in{\rm dom}\,\la\D\ra^2$, we have
$$
\big\Vert \big(1+\la\D\ra^2+\frac{i}{2}(\D^2-\D^{*2})\big)\xi\big\Vert\leq C\Vert (1+\la\D\ra^2)\xi\Vert.
$$
Hence we also have
$$
\big\Vert \big(1+\la\D\ra^2+\frac{i}{2}(\D^2-\D^{*2})\big)(1+\la\D\ra^2)^{-1}\big\Vert\leq C.
$$
\end{cor}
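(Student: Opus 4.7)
The plan is to apply the commutator theorem (Theorem \ref{thm:RS}) with $N := 1 + \overline{\la\D\ra^2}$ and $A := \overline{\pert}$. By condition 1) of Definition \ref{no-beta-defn}, $\la\D\ra^2$ is essentially self-adjoint, and since it equals $\tfrac{1}{2}(\D\D^* + \D^*\D)$ it is nonnegative, so $N$ is a positive self-adjoint operator with $N \geq 1$. A direct computation shows $\pert^* = \pert$ on $\HH_\infty$, so its closure is symmetric and closed. Lemma \ref{lem:bound_A} gives $\pert \in \mathrm{OP}^2(\la\D\ra)$, while condition 2a) of Definition \ref{no-beta-defn} yields $[N, A] = [\la\D\ra^2, \pert] \in \mathrm{OP}^2(\la\D\ra)$. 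Since $(1+N)^{1/2} = (2+\la\D\ra^2)^{1/2}$ differs from $(1+\la\D\ra^2)^{1/2}$ by a bounded invertible operator with bounded inverse, we have $\mathrm{OP}^2(N^{1/2}) = \mathrm{OP}^2(\la\D\ra)$, so both OP-space hypotheses of Theorem \ref{thm:RS} are satisfied.

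Applying Theorem \ref{thm:RS} then yields $\dom N \subset \dom(N + A)$, the norm bound $\|(N+A)\xi\| \leq C\|N\xi\|$ for all $\xi \in \dom N$, and essential self-adjointness of $N + A = 1 + \la\D\ra^2 + \pert$ on any core for $N$. Since shifting by the identity preserves essential self-adjointness, and since cores for $N$ coincide with cores for $\la\D\ra^2$, this immediately establishes the first two assertions of the corollary together with the claimed inequality (after unwinding $N = 1+\la\D\ra^2$). For the final operator-norm bound, the idea is to apply the previous inequality to $\xi := (1 + \la\D\ra^2)^{-1}\eta$ for arbitrary $\eta \in \HH$; such $\xi$ lies in $\dom \la\D\ra^2 = \dom N$ since $\la\D\ra^2$ is self-adjoint (and $N^{-1}$ maps $\HH$ into $\dom N$), and substituting $(1+\la\D\ra^2)\xi = \eta$ produces the desired bound.

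The main obstacle, insofar as there is one, is the compatibility check $\mathrm{OP}^2(N^{1/2}) = \mathrm{OP}^2(\la\D\ra)$: intuitively clear, but one must verify that the $\delta$-smoothness built into the OP-spaces is preserved under replacing $(1+\la\D\ra^2)^{1/2}$ by $(1+N)^{1/2}$, which reduces to noting that the ratio of these operators lies in $\mathrm{OP}^0$ by functional calculus (indeed Lemma \ref{lem:OP-invert} is the relevant tool). Everything else is a direct plug-in to the commutator theorem, and no further subtleties about `symmetric versus self-adjoint' arise precisely because condition 1) of Definition \ref{no-beta-defn} was designed to exclude them.
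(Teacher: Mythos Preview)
Your proof is correct and follows exactly the paper's approach: the paper's own proof is a single sentence invoking Theorem \ref{thm:RS} (the commutator theorem) and noting that its hypotheses follow from condition 2a), while you simply spell out the verification of those hypotheses in more detail. The compatibility check $\mathrm{OP}^2(N^{1/2}) = \mathrm{OP}^2(\la\D\ra)$ you flag is a minor bookkeeping point the paper does not bother to make explicit (and is indeed harmless, since $(2+\la\D\ra^2)^{1/2} - (1+\la\D\ra^2)^{1/2}$ is bounded by functional calculus).
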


\begin{proof} All of these statements follow from \cite[Theorem X.36]{RS}, quoted here as Theorem \ref{thm:RS},
whose hypotheses follow from 
the condition $2a)$. 
\end{proof}

%

\begin{cor}
\label{cor:D_E-sa}
Let $(\A,\HH,\D)$ be a pseudo-Riemannian spectral triple. Then the operator
$\D_E=\frac{e^{i\pi/4}}{\sqrt{2}}(\D-i\D^*)$ is essentially self-adjoint. Hence
$\D_E^2=\la\D\ra^2+\frac{i}{2}(\D^2-\D^{*2})$ is a positive operator.
\end{cor}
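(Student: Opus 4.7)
The plan is to combine three earlier results: Definition \ref{lem:symm} identifies $\D_E$ as symmetric with $\D_E^2=\la\D\ra^2+\pert$ on $\dom\D_E^2$; Corollary \ref{lem:self-adj2} gives essential self-adjointness of $\la\D\ra^2+\pert$; and Lemma \ref{lem:sqrt-self-adj} upgrades essential self-adjointness of a square to essential self-adjointness of the operator itself. Let $H$ denote the (unique) self-adjoint closure of $\la\D\ra^2+\pert$.

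First I would observe that $\D_E$ is symmetric by Definition \ref{lem:symm}, and that $\D_E^2$ on $\dom\D_E^2=\dom\D^2\cap\dom\D\D^*\cap\dom\D^*\D\cap\dom\D^{*2}$ coincides with the restriction of $H$ to this domain. To apply Lemma \ref{lem:sqrt-self-adj}, it then suffices to show that $\dom\D_E^2$ is a core for $H$, because then $\D_E^2$ is essentially self-adjoint with closure $H$, and the symmetric $\D_E$ has essentially self-adjoint square.

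The natural candidate core to locate inside $\dom\D_E^2$ is the smooth domain $\HH_\infty$. By condition $2a)$ of Definition \ref{no-beta-defn}, the operator $\pert$ maps $\HH_\infty$ into itself, so in particular $\HH_\infty\subseteq\dom\D^2\cap\dom\D^{*2}$. Combined with $\HH_\infty\subseteq\dom\la\D\ra^2$, this places $\HH_\infty$ inside $\dom\D_E^2$. Since $\HH_\infty$ is the space of smooth vectors of the self-adjoint operator $\overline{\la\D\ra^2}$, it is automatically a core for $\la\D\ra^2$; Theorem \ref{thm:RS} then promotes it to a core for $H$. Lemma \ref{lem:sqrt-self-adj} now delivers essential self-adjointness of $\D_E$.

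For the positivity assertion, the symmetry of $\D_E$ yields, for $\phi\in\dom\D_E^2$,
\[
\la\D_E^2\phi,\phi\ra=\la\D_E\phi,\D_E\phi\ra=\|\D_E\phi\|^2\geq 0,
\]
and positivity on a core extends to the self-adjoint closure $H$.

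The main technical obstacle is the inclusion $\HH_\infty\subseteq\dom\D_E^2$: the smooth vectors $\HH_\infty$ are defined relative to $\overline{\la\D\ra^2}$, whose domain can be strictly larger than the original $\dom\D\D^*\cap\dom\D^*\D$. Pinning this down cleanly requires combining condition $2a)$ with the boundedness of $\D$ and $\D^*$ by $\la\D\ra$ established in Lemma \ref{lem:bound_A}, in order to verify that each of the four individual domains comprising $\dom\D_E^2$ genuinely contains $\HH_\infty$.
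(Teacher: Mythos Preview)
Your approach is essentially the same as the paper's: invoke Corollary~\ref{lem:self-adj2} for the essential self-adjointness of $\la\D\ra^2+\pert$, then apply Lemma~\ref{lem:sqrt-self-adj} to the symmetric operator $\D_E$. The paper's proof is a two-line citation of these results and does not engage with the domain subtlety you raise; you are right that one should check $\dom\D_E^2$ contains a core for the self-adjoint closure of $\la\D\ra^2+\pert$, and your route via $\HH_\infty$ together with condition~2a) and the bounds from Lemma~\ref{lem:bound_A} is the natural way to do this. For the positivity of $\D_E^2$, the paper instead argues from the reality of the spectrum of the self-adjoint $\overline{\D_E}$, while you use the quadratic form inequality $\la\D_E^2\phi,\phi\ra=\|\D_E\phi\|^2\geq 0$; both are valid and equally short.
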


\begin{proof}
The first statement follows from Lemma \ref{lem:sqrt-self-adj}, while the second
is a consequence of the reality of the spectrum of $\D_E$.
\end{proof}

Ultimately we will be interested in obtaining spectral triples from pseudo-Riemannian spectral triples
which satisfy the hypotheses of the local index formula. For this we introduce the following notion of 
spectral dimension and smooth summability for pseudo-Riemannian spectral triples.

\begin{defn}\label{def:spec-dim-semi}
A  pseudo-Riemannian spectral triple  $(\A,\HH,\D)$, is said to be finitely summable if
there exists $s>0$ such that for all $a\in\A$, $a(1+\la\D\ra^2)^{-s/2}\in\cl^1(\HH)$.
In such a case, we  let
$$
p:=\inf\big\{s>0\,:\,\forall a\in\A, \,\, {\rm Trace}\big(|a|(1+\la\D\ra^2)^{-s/2}\big)<\infty
\big\},
$$
and call $p$ the spectral dimension of $(\A,\HH,\D)$.
\end{defn}


\begin{defn}
\label{delta-phi-semi}
Let $(\A,\HH,\D)$ be a pseudo-Riemannian  spectral triple. Define the set $S^0 := \A\cup[\D,\A]\cup[\D^*,\A]$, and then recursively define the sets $S^n := [\la\D\ra^2,S^{n-1}]\cup[\pert,S^{n-1}]$ for $n\geq1$. Then  $(\A,\HH,\D)$ is
$QC^k$ summable if $(\A,\HH,\D)$ is finitely summable with spectral dimension $p$ and 
\begin{align*}
S^n&\subset \B_1^{k-n}(\la\D\ra,p) (1+\la\D\ra^2)^{n/2} \ \ \forall\,0\leq n\leq k.
\end{align*}
We say that $(\A,\HH,\D)$ is smoothly summable if it is $QC^k$ summable for all $k\in\N$ or, equivalently, 
if 
\begin{align*}
S^n&\subset \B_1^\infty(\la\D\ra,p) (1+\la\D\ra^2)^{n/2} = {\rm OP}^n_0(\la\D\ra) \ \ \forall\,n\geq 0.
\end{align*}
\end{defn}

\begin{rem}
If $\pert=\frac{i}{2}(\D^2-\D^{*2})=0$, so that $\la\D\ra^2=\D_E^2$, then the  
definition of smooth summability here would reduce to $S^0\subset {\rm OP}^0_0(\la\D\ra)=
{\rm OP}^0_0(|\D_E|)$. 
\end{rem}

\subsection{Examples}
\subsubsection{Pseudo-Riemannian manifolds} Proposition \ref{prop:need-this} shows
that those pseudo-Riemannian manifolds arising by reflection from complete Riemannian manifolds of bounded
geometry have a Dirac operator satisfying our definition of pseudo-Riemannian spectral triples.

For finite summability, we observe that the bounded geometry hypothesis ensures that 
${\rm Trace}(a(1+\la\sD\ra^2)^{-s/2})$ is finite for $s>n=\dim M$, 
where $a\in C_c^\infty(M)$ is a compactly supported smooth function. 
Hence the spectral dimension $p$ is equal to the metric dimension $n$; see \cite{CGRS2}. 

For smooth summability, we also need ${\rm Trace}(\gamma(e_j) a(1+\la\sD\ra^2)^{-s/2})$ to be finite, which holds since $a$ is compactly supported, and so $\gamma(e_j) a$ is also compactly supported and bounded. 
Furthermore, we observe that $\la\sD\ra^2$ is a uniformly elliptic second order differential operator
with scalar principal symbol (the metric). 
Hence $\la\sD\ra^2$ determines the usual order of compactly supported pseudodifferential
operators. 

The correction $\pertsD=\frac{i}{2}(\sD^2-\sD^{*2})$ is a first order operator, and the bounded geometry
hypothesis implies that $\pertsD \in {\rm OP}^1(\la\sD\ra)$. 
Hence
$[\pertsD,{\rm OP}^r_0(\la\sD\ra)]\subset {\rm OP}^{r+1}_0(\la\sD\ra)$. It thus follows that the triple $(C_c^\infty(M), L^2(M,S), \sD)$ is a smoothly summable pseudo-Riemannian spectral triple.

\subsubsection{Previous work}

Paschke and Sitarz, \cite{PS}, used equivariance with respect to a group or quantum group of isometries 
to produce examples of Lorentzian spectral triples, which are a special case
of the definition presented above. In particular they constructed examples on noncommutative tori and noncommutative 
$3$-spheres (both isospectral deformations of $S^3$  and $SU_q(2)$).

Strohmaier and Van Suijlekom, \cite{S,vS}, used a Krein space formulation for their definition of 
semi-Riemannian spectral triples, but this is largely equivalent to the approach we
have adopted, as can be seen from Sections \ref{subsec:Krein} and \ref{LIT}. Strohmaier also examined noncommutative tori, while Van Suijlekom examined
deformations of generalised cylinders. All of these examples can be put in our framework.

\subsubsection{Finite Geometries}
\label{subsub:finite}
Just as there are virtually no constraints to the existence of a
spectral triple
for a finite dimensional algebra,  pseudo-Riemannian spectral
triples are easily constructed in this case.
So let $\A$ be a finite dimensional complex algebra, i.e. a
direct sum of simple
matrix algebras. Choose two representations of $\A$ on finite dimensional
Hilbert spaces $\HH_1$ and $\HH_2$. Let $\HH=\HH_1\oplus\HH_2$,
and $\Gamma=\bma
1 & 0\\ 0 &-1\ema$ with respect to this decomposition. All we need now is a
`Dirac' operator. Choose any linear map $B:\HH_1\to\HH_2$, and set
$\D=\bma 0 &
0\\ B&0\ema$. Then the definition of an even  pseudo-Riemannian spectral
triple
is
trivially fulfilled. Likewise it is trivially smoothly summable.

\subsubsection{First order differential operators}
\label{subsub:1st-order}
We consider a constant coefficient first order differential operator 
of the form
$$
\D=\sum_{j=1}^nM_j\frac{\p}{\p x^j}+K
$$
where $K,M_j\in M_d(\C)$. The operator $\D$, acting on the smooth compactly supported
sections in $L^2(\R^n,\C^d)$, extends to a
closed and densely defined operator. 
One may check that $(C_c^\infty(\R^n), L^2(\R^n,\C^d), \D)$ yields a pseudo-Riemannian spectral triple provided that for all $j,k=1,\ldots,n$ and for all $0\neq\xi=(\xi_1,\dots,\xi_n)\in\R^n$ the following three conditions hold:
\begin{gather*} 
\sum_{j,k=1}^n(M^*_jM_k+M_jM_k^*)\xi_j\xi_k \in GL_d(\C) ,\\
\{M_j,M_k\}=\{M_j^*,M_k^*\} ,\\
\left[\sum_{j,k=1}^n(M^*_jM_k+M_jM_k^*)\xi_j\xi_k,
\sum_{j=1}^n\left(\{M_j,K\}+\{M_j^*,K^*\}\right)\xi_j\right]=0.
\end{gather*}

Provided that in addition
\begin{align*}
[M_j,M_kM_l^*+M_k^*M_l]&=0, & \text{and} && [K,M_kM_l^*+M_k^*M_l]&=0,
\label{eq:smooth-comms}
\end{align*}
the pseudo-Riemannian spectral triple is in fact smoothly summable.

\subsubsection{The harmonic oscillator} 
We consider
(the creation operator associated to) the harmonic oscillator
$$ 
\D=\frac{d}{dx}+x,
$$
acting on $L^2(\R)$. To show that 
we obtain a pseudo-Riemannian spectral triple, we observe that the operator 
$$
\la\D\ra^2=-\frac{d^2}{dx^2}+x^2
$$
is well-known to be essentially self-adjoint and to have compact resolvent. For all $a\in C_1^\infty(\R)$,
the smooth functions with integrable derivatives, 
the commutators $[\D,a]$ and $[\D^*,a]$ are bounded. The other items to check
involve the operator
$$
\pert:=\frac{i}{2}(\D^2-\D^{*2})=i(1+2x\frac{d}{dx}) ,
$$
which is first order as a differential operator, but has an unbounded coefficient. 
With $\D_E=\frac12(\D+\D^*)+\frac i2(\D-\D^*)$ we have $\D_E=i\frac{d}{dx}+x$, but we could just
as easily take $\widetilde{\D_E}=\frac12(\D+\D^*)-\frac i2(\D-\D^*)$ to get 
$\widetilde{\D_E}=-i\frac{d}{dx}+x$. Our 
results apply to both possibilities.

\begin{lemma}
\label{lem:osc-one}
For all $n\geq 0$ the operator $\pert^{(n)}$ is an element of ${\rm OP}^2(\la\D\ra)$,
and for all $a\in C_1^\infty(\R)$ the operator $a\pert(1+\la\D\ra^2)^{-1}$ is compact.
\end{lemma}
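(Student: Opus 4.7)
The plan is to reduce everything to explicit calculations in the Hermite basis. Writing $\D = \frac{d}{dx}+x$ and $\D^* = -\frac{d}{dx}+x$, we have $[\D,\D^*] = 2$ and $\la\D\ra^2 = \D^*\D+1 = \D\D^*-1$. Let $\{\varphi_n\}_{n\geq 0}$ denote the Hermite basis; then $\la\D\ra^2\varphi_n = (2n+1)\varphi_n$, $\D\varphi_n = \sqrt{2n}\,\varphi_{n-1}$, and $\D^*\varphi_n = \sqrt{2n+2}\,\varphi_{n+1}$.

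From the elementary identities $[\la\D\ra^2,\D] = -2\D$ and $[\la\D\ra^2,\D^*] = 2\D^*$ one gets $[\la\D\ra^2,\D^2] = -4\D^2$ and $[\la\D\ra^2,\D^{*2}] = 4\D^{*2}$. Hence
$$
\pert^{(1)} = -2i(\D^2+\D^{*2}),\qquad \pert^{(2)} = 16\,\pert,
$$
and by induction $\pert^{(n+2)} = 16\,\pert^{(n)}$, so that every $\pert^{(n)}$ is a scalar multiple of either $\pert$ or $\pert^{(1)}$. The first assertion of the lemma thus reduces to showing $\D^2,\D^{*2}\in{\rm OP}^2(\la\D\ra)$. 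Computing directly in the Hermite basis, $(1+\la\D\ra^2)^{-1}\D^2\varphi_n = \sqrt{n/(n-1)}\,\varphi_{n-2}$ is bounded, and each application of $\delta = [(1+\la\D\ra^2)^{1/2},\cdot]$ introduces a factor $\sqrt{2n-2}-\sqrt{2n+2} = O(n^{-1/2})$; iterating keeps all $\delta^k((1+\la\D\ra^2)^{-1}\D^2)$ bounded. The same argument handles $\D^{*2}$.

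For the compactness statement, factorise
$$
a\,\pert\,(1+\la\D\ra^2)^{-1} = \big(a\cdot\pert\,(1+\la\D\ra^2)^{-1/2}\big)\cdot(1+\la\D\ra^2)^{-1/2}.
$$
The hypothesis $a\in C_1^\infty(\R)$ forces $a(x)-a(0)=\int_0^x a'$ to be bounded, so $a$ acts as a bounded multiplication operator. A matrix element computation as above shows that $\pert\,(1+\la\D\ra^2)^{-1/2}$ is bounded (its leading Hermite coefficients approach constants), so the first factor is a product of bounded operators. The second factor is compact since $(1+\la\D\ra^2)^{-1/2}$ is diagonal with eigenvalues $(2n+2)^{-1/2}\to 0$; hence the product is compact.

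The main technical burden is the verification that every $\delta^k$-iterate of $(1+\la\D\ra^2)^{-1}\D^2$ remains bounded. This requires careful tracking of the factors $\big(\sqrt{2m+2}-\sqrt{2n+2}\big)^k$ produced by $\delta^k$, which must compensate the polynomial growth in the matrix entries of $\D^2$; once this bookkeeping is complete, the remaining statements follow immediately.
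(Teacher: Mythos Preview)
Your argument for $\pert^{(n)}\in{\rm OP}^2(\la\D\ra)$ is correct and essentially equivalent to the paper's, though organised differently: the paper writes $\pert^{(1)}=4i\la\D\ra^2-8ix^2$ (which is your $-2i(\D^2+\D^{*2})$ rewritten) and then uses the operator inequalities $0\leq x^2\leq 1+\la\D\ra^2$ and $0\leq -\frac{d^2}{dx^2}\leq 1+\la\D\ra^2$ to bound $(1+\la\D\ra^2)^{-1/2}\pert^{(j)}(1+\la\D\ra^2)^{-1/2}$ directly, rather than computing Hermite matrix elements. Your explicit diagonalisation is more laborious but perfectly valid.

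The compactness argument, however, has a genuine gap. You claim that $\pert(1+\la\D\ra^2)^{-1/2}$ is bounded, but this is false: in the Hermite basis you yourself set up, $\D^{*2}(1+\la\D\ra^2)^{-1/2}\varphi_n=(2n+2)^{-1/2}\sqrt{(2n+2)(2n+4)}\,\varphi_{n+2}$, whose norm grows like $\sqrt{2n}$. Indeed, you have just shown that $\pert\in{\rm OP}^2(\la\D\ra)$ and \emph{not} ${\rm OP}^1(\la\D\ra)$, which is exactly the obstruction. The factorisation $a\cdot\big(\pert(1+\la\D\ra^2)^{-1/2}\big)\cdot(1+\la\D\ra^2)^{-1/2}$ therefore does not exhibit the operator as (bounded)$\times$(compact).

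What makes $a\pert(1+\la\D\ra^2)^{-1}$ compact is that multiplication by $a$ tames the unbounded coefficient in $\pert=i(1+2x\frac{d}{dx})$. The paper's argument writes $a\pert=ia+2i(ax)\frac{d}{dx}$, observes that $ax$ is bounded for the functions under consideration, and then uses that both $x(1+\la\D\ra^2)^{-1/2}$ and $\frac{d}{dx}(1+\la\D\ra^2)^{-1/2}$ are bounded while $(1+\la\D\ra^2)^{-1/2}$ is compact. The point is that the compactness genuinely depends on $a$, not just on its boundedness; your factorisation throws this information away.
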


\begin{proof}
Straightforward calculations show that
$$
\pert^{(1)}=[\la\D\ra^2,\pert]=4i\la\D\ra^2-8ix^2,\quad \pert^{(2)}=16\pert,
$$ 
and so it suffices to check the first claim for $n=0,1$. We begin by observing that
$
0\leq x^2\leq x^2-\frac{d^2}{dx^2}+1
$
implies 
\begin{equation}
\Vert x(1+x^2-\frac{d^2}{dx^2})^{-1/2}\Vert^2
=\Vert (1+x^2-\frac{d^2}{dx^2})^{-1/2}x\Vert^2
\leq 1.
\label{eq:order-1}
\end{equation}
Similarly $\Vert \frac{d}{dx}(1+x^2-\frac{d^2}{dx^2})^{-1/2}\Vert\leq 1$. Hence
$$
(1+\la\D\ra^2)^{-1/2}\pert(1+\la\D\ra^2)^{-1/2}\ \mbox{and}\ (1+\la\D\ra^2)^{-1/2}\pert^{(1)}(1+\la\D\ra^2)^{-1/2}
$$
are bounded operators. Hence $\pert\in {\rm OP}^2(\la\D\ra)$ as is $\pert^{(1)}$.
If $a$ is a bounded integrable function, 
the product $ax$ is bounded, and so by the compactness of $(1+\la\D\ra^2)^{-1/2}$
we see that $a\pert(1+\la\D\ra^2)^{-1}$ is compact. 
\end{proof}

Thus the harmonic oscillator gives rise to a pseudo-Riemannian spectral triple.

\begin{prop}
\label{prop:harm_osc_smooth}
The pseudo-Riemannian spectral triple 
$(C_1^\infty(\R),L^2(\R),\frac{d}{dx}+x)$ is smoothly summable
with spectral dimension $1$.
\end{prop}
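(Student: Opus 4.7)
The plan is to handle the two assertions separately: (i) the spectral dimension equals $1$, and (ii) the triple is smoothly summable. Both rest on explicit resolvent estimates for $H := \la\D\ra^2 = -d^2/dx^2 + x^2$ derived from Mehler's heat-kernel formula
$$
e^{-tH}(x,y) = (2\pi\sinh(2t))^{-1/2} \exp\!\left(-\frac{(x^2+y^2)\cosh(2t) - 2xy}{2\sinh(2t)}\right),
$$
whose diagonal reduces to $(2\pi\sinh(2t))^{-1/2} e^{-x^2\tanh t}$. Combined with the Laplace representation $(1+H)^{-s/2} = \Gamma(s/2)^{-1}\int_0^\infty t^{s/2-1}e^{-t}e^{-tH}\, dt$, this provides all the trace-norm information needed.

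For the spectral dimension, I would evaluate, for $a\in C_1^\infty(\R)$,
$$
\mathrm{Tr}\bigl(|a|(1+H)^{-s/2}\bigr) = \frac{1}{\Gamma(s/2)}\int_0^\infty \frac{t^{s/2-1}e^{-t}}{\sqrt{2\pi\sinh(2t)}}\int_\R |a(x)| e^{-x^2\tanh t}\, dx\, dt.
$$
Since every element of $C_1^\infty(\R)$ is integrable, the inner integral is bounded in $t$ by $\|a\|_1$, while the outer integrand behaves like $t^{(s-3)/2}$ as $t\to 0^+$ and is dominated by $e^{-t}$ at infinity; hence the trace is finite iff $s > 1$. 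The opposite direction (divergence for $s \leq 1$ and any nonzero $0\leq a$) follows from the same identity, since the inner integral tends to $\int_\R a\, dx > 0$ as $t\to 0^+$. This identifies the spectral dimension as $p = 1$.

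For smooth summability I would argue inductively that $S^n \subset {\rm OP}_0^n(\la\D\ra) = (1+H)^{n/2}\B_1^\infty(\la\D\ra,1)$. The driving observation is that every element of $S^n$ is a finite linear combination of operators of the form $P_n(x,\partial_x) M_{a^{(k)}}$, where $P_n$ is a noncommutative polynomial of total degree at most $n$ in $x$ and $\partial_x$, and $a^{(k)}$ is a derivative of some $a\in C_1^\infty(\R)$. The base cases are the multiplication operators $a,[\D,a],[\D^*,a]$, together with $[H,a] = -a'' - 2a'\partial_x$ and $[\pert,a] = 2ixa'$; each further commutator with $H$ or $\pert$ adds at most one factor of $x$ or $\partial_x$ to $P_n$ and replaces $a^{(k)}$ by a higher derivative. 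The bounds $\|x(1+H)^{-1/2}\|,\|\partial_x(1+H)^{-1/2}\|\leq 1$ already used in Lemma \ref{lem:osc-one} then let each such term be written as $(1+H)^{n/2}$ times a bounded operator whose trace-norm behaviour again reduces to the Mehler computation above applied to a derivative of $a$.

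The main obstacle is to check that these remainder operators in fact lie in $\B_1^\infty(\la\D\ra,1)$, that is, stay in the trace ideal after arbitrarily many applications of $\delta = [(1+H)^{1/2},\cdot\,]$. Via the Taylor expansion of Proposition \ref{TE}, this reduces to tracking how $\delta$ acts on the building blocks $P_m(x,\partial_x) M_{a^{(k)}}$: since $[H,x]$ and $[H,\partial_x]$ stay in the algebra generated by $x$, $\partial_x$ and $1$, iterating $\delta$ produces only further polynomials in $x,\partial_x$ paired with higher derivatives of $a$, all of which satisfy the same Mehler-based estimate. The combinatorial bookkeeping required for this last step is the principal technical content of the proof; once carried out, smooth summability with $p=1$ follows.
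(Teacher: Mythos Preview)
Your proposal is correct and follows essentially the same route as the paper: Mehler's formula together with the Laplace representation for the spectral dimension, and an inductive normal form for elements of $S^n$ as ``function in $C_1^\infty(\R)$ times polynomial of degree $\leq n$ in $x$ and $\partial_x$'' for smooth summability.

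Two simplifications in the paper are worth noting, since you flag the final bookkeeping as the principal difficulty. First, because $[\D,a]=a'$ and $[\D^*,a]=-a'$, one has $S^0=\A$ outright, which streamlines the induction. Second, the paper orders the normal form with the function on the \emph{left}, writing $T=\sum_{k+l\leq n} a_{k,l}\,x^k\partial_x^l$ with $a_{k,l}\in C_1^\infty(\R)$; having already checked that $a_{k,l}\in{\rm OP}^0_0(\la\D\ra)$ (via the Mehler estimate and $[\la\D\ra^2,a]=-a''-2a'\partial_x$) and that $x,\partial_x\in{\rm OP}^1(\la\D\ra)$, the conclusion $S^n\subset{\rm OP}^n_0(\la\D\ra)$ then follows immediately from the two-sided ideal property ${\rm OP}^0_0\cdot{\rm OP}^n\subset{\rm OP}^n_0$, with no need to track $\delta$ by hand or invoke Proposition~\ref{TE}.
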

\begin{proof}
Let $a:\R\to[0,\infty)$ be a smooth integrable function, and
let $(x,y)\mapsto k_t(x,y)$ be the integral kernel of $e^{-t\la\D\ra^2}$ for $\D=\frac{d}{dx}+x$.
Mehler's formula gives
$$
k_t(x,y)=\frac{1}{\sqrt{2\pi\sinh(2t)}}e^{-\frac{1}{2}\coth(2t)(x^2+y^2)+{\rm cosech}(2t)xy}.
$$
Then for $s>2$
\begin{align*}
{\rm Trace}(a(1+\la\D\ra^2)^{-s/2})
&=\frac{1}{\Gamma(s/2)}\int_\R a(x)\int_0^\infty t^{s/2-1}\,e^{-t}k_t(x,x)\,dt\,dx\\
&\leq \frac{1}{\Gamma(s/2)}\int_\R a(x)\,dx\,\int_0^\infty t^{s/2-1}\,e^{-t}\frac{1}{\sqrt{2\pi\sinh(2t)}}\,dt
\end{align*}
and this remains finite for $s>1$. Thus the spectral dimension is $\leq 1$. To see that the
spectral dimension is $\geq 1$, and so is precisely $1$, one computes this trace for the function
$a:x\mapsto e^{-x^2}$. 

For the algebra $\A=C_1^\infty(\R)$, 
we find that the commutators $[\D,a]$ and $[\D^*,a]$ are again elements of this algebra. 
With the notation of Definition \ref{delta-phi-semi}, we thus find that
 $S^0 := \A\cup[\D,\A]\cup[\D^*,\A] = \A$. 
The above computations now allow us to see that 
$S^0$ lies in $B_1(\la\D\ra,1)$. 


Finally, we need to check that $S^n \subset {\rm OP}^n_0(\la\D\ra)$ for all $n$. First, by Equation \eqref{eq:order-1}, and the relations
\begin{align*}
\left[\la\D\ra^2,\frac{d}{dx}\right] &= 2x , & \left[\la\D\ra^2,x\right] &= -2\frac{d}{dx} 
&[\pert,x^n\frac{d^m}{dx^m}]=2i(n-m)x^n\frac{d^m}{dx^m},
\end{align*}
we can see that both multiplication by $x$ and $\frac{d}{dx}$
lie in ${\rm OP}^1(\la\D\ra)$. For $a\in C_1^\infty(\R)$, we use the computation 
$$
[\la\D\ra^2,a]=-a''-2a'\frac{d}{dx}
$$
and a simple induction to see that $a\in {\rm OP}^0(\la\D\ra)$. 
Hence  $C_1^\infty(\R)\subset {\rm OP}^0_0(\la\D\ra)$.
It is then straightforward to check that any element 
$T\in S^n$ can be written in the form 
$$
T = \sum_{k+l\leq n} a_{k,l} x^k \frac{d^l}{d x^l}
$$
for functions $a_{k,l}\in C_1^\infty(\R)$. This is obviously true for $n=0$. 
Assuming it holds for all $T\in S^n$ for some $n$, one shows it also 
holds for $n+1$ by explicitly calculating the commutators 
$[\la\D\ra^2,T]$ and $[\pert,T]$. So it follows by induction that 
indeed $T = \sum_{k+l\leq n} a_{k,l} x^k \frac{d^l}{d x^l}$ for all $T\in S^n$, for all $n$. 
Since $a_{k,l}\in {\rm OP}^0_0(\la\D\ra)$, $x^k \in {\rm OP}^k(\la\D\ra)$, 
and $\frac{d^l}{d x^l} \in {\rm OP}^l(\la\D\ra)$, it follows that 
$S^n \subset {\rm OP}^n_0(\la\D\ra)$ for all $n$.
Therefore we conclude that the pseudo-Riemannian spectral triple 
$(C_1^\infty(\R),L^2(\R),\frac{d}{dx}+x)$ is smoothly summable. 
%
%
%
%
%
\end{proof}

\subsubsection{Semifinite examples} There is a notion of
semifinite spectral triple, \cite{BeF,CPRS1,CPRS2,CPRS3}, where
$(\B(\HH),\K(\HH),\mbox{Trace})$ are replaced by $(\cn,\K(\cn,\tau),\tau)$
where $\cn$ is an arbitrary semifinite von Neumann algebra, $\K(\cn,\tau)$
is the ideal of $\tau$-compact operators in $\cn$, and $\tau$ is a
faithful, semifinite, normal trace. Thus we require $\D$ affiliated
to $\cn$, and the compact resolvent condition is relative to $\K(\cn,\tau)$.
Examples of semifinite spectral triples arising from graph and
$k$-graph
$C^*$-algebras were described in \cite{PR,PRS}. These were constructed
using the natural action of the torus $T^k$ on a $k$-graph algebra, by
`pushing forward' the Dirac operator on the torus. More sophisticated examples 
coming from covering spaces
of manifolds of bounded geometry were considered in \cite{CGRS2} also.

For the $k$-graph algebras, $k\geq 2$, we may of course take a
Lorentzian
Dirac operator (or more generally pseudo-Riemannian Dirac operator) and
push
this forward instead. This gives rise to a `semifinite pseudo-Riemannian
spectral triple', but as the details would take us too far afield, we
leave this to the reader to explore.

\section{Constructing a spectral triple from a pseudo-Riemannian spectral triple}
\label{sRst-st}
Our main theorem associates to a pseudo-Riemannian spectral triple a 
bona fide spectral triple. As before, we write $\pert:=\frac i2(\D^2-\D^{*2})$ for brevity. 

\begin{thm}
\label{convert} 
Let $(\A,\HH,\D)$ be a pseudo-Riemannian spectral triple. Consider
$$
\D_E:=\frac{1}{2}(\D+\D^*)+\frac{i}{2}(\D-\D^*)=\frac{e^{i\pi/4}}{\sqrt{2}}(\D-i\D^*)
$$
as in Definition \ref{lem:symm}. 
Then $(\A,\HH,\D_E)$ is a spectral triple. 
\end{thm}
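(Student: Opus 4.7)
To verify that $(\A,\HH,\D_E)$ is a spectral triple, I need to check three conditions. Self-adjointness of the closure of $\D_E$ is Corollary \ref{cor:D_E-sa}. Boundedness of $[\D_E,a]$ for $a\in\A$ follows because $\D_E$ is a fixed complex linear combination of $\D$ and $\D^*$, so $[\D_E,a]$ is the corresponding linear combination of $[\D,a]$ and $[\D^*,a]$, both bounded by condition 3 of Definition \ref{no-beta-defn}. The remainder of the work is to show $a(1+\D_E^2)^{-1/2}\in\K(\HH)$ for each $a\in\A$.

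My plan is to first establish that $a(1+\D_E^2)^{-1}$ is compact and then deduce the compact resolvent condition via the $TT^*$ trick with $T:=a(1+\D_E^2)^{-1/2}$: the identity $TT^*=a(1+\D_E^2)^{-1}a^*$ shows that $TT^*$ is a compact operator times a bounded one, hence compact, so $T$ is compact by polar decomposition. For the compactness of $a(1+\D_E^2)^{-1}$, I would start from the resolvent identity
$$
(1+\D_E^2)^{-1} - (1+\la\D\ra^2)^{-1} = -(1+\D_E^2)^{-1}\pert(1+\la\D\ra^2)^{-1},
$$
which is well-defined because $(1+\la\D\ra^2)^{-1}\HH = \dom\la\D\ra^2$ lies in both $\dom\pert$ (by Lemma \ref{lem:bound_A}) and $\dom\D_E^2$ (by Corollary \ref{lem:self-adj2}). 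Self-adjointness of the left-hand side lets us equivalently write the right-hand side as $-(1+\la\D\ra^2)^{-1}\pert(1+\D_E^2)^{-1}$, interpreted via the bounded extension of $(1+\la\D\ra^2)^{-1}\pert$ (which exists since $\pert\in{\rm OP}^2(\la\D\ra)$). Multiplying by $a$ gives
$$
a(1+\D_E^2)^{-1} = a(1+\la\D\ra^2)^{-1} - \bigl[a(1+\la\D\ra^2)^{-1}\pert\bigr](1+\D_E^2)^{-1}.
$$
The first term is compact by condition 4, and for the second it suffices to show $a(1+\la\D\ra^2)^{-1}\pert$ is compact, since $(1+\D_E^2)^{-1}$ is bounded. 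Using $[(1+\la\D\ra^2)^{-1},\pert] = -(1+\la\D\ra^2)^{-1}[\la\D\ra^2,\pert](1+\la\D\ra^2)^{-1}$, one rewrites
$$
a(1+\la\D\ra^2)^{-1}\pert = a\pert(1+\la\D\ra^2)^{-1} - a(1+\la\D\ra^2)^{-1}[\la\D\ra^2,\pert](1+\la\D\ra^2)^{-1},
$$
in which the first summand is compact by condition 2b, and the second is the compact operator $a(1+\la\D\ra^2)^{-1}$ (from condition 4) composed with the bounded operator $[\la\D\ra^2,\pert](1+\la\D\ra^2)^{-1}$ (bounded because $[\la\D\ra^2,\pert]\in{\rm OP}^2(\la\D\ra)$ by condition 2a).

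The hard part is precisely this compactness step. A more direct attempt would factor $a(1+\D_E^2)^{-1/2} = [a(1+\la\D\ra^2)^{-1/2}]\cdot[(1+\la\D\ra^2)^{1/2}(1+\D_E^2)^{-1/2}]$ and try to bound the second factor, but that requires a form estimate we do not have: Corollary \ref{lem:self-adj2} controls $(1+\D_E^2)(1+\la\D\ra^2)^{-1}$ but in general $\dom\D_E^2$ may strictly contain $\dom\la\D\ra^2$, and the reverse comparison fails. Going via $a(1+\D_E^2)^{-1}$ and the $TT^*$ trick sidesteps these domain issues, because after the resolvent identity everything in sight is a bounded operator, and the commutator rearrangement above extracts exactly the compactness content of conditions 2a, 2b, and 4 of Definition \ref{no-beta-defn}.
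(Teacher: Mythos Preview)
Your proof is correct and follows essentially the same route as the paper: the resolvent identity together with the commutator rewriting of $a(1+\la\D\ra^2)^{-1}\pert$ is exactly the argument the paper uses to show $a(1+\D_E^2)^{-1}$ is compact from conditions 2a, 2b, and 4. The only difference is the final passage from $a(1+\D_E^2)^{-1}$ to $a(1+\D_E^2)^{-1/2}$: the paper uses the integral formula for fractional powers, $a(1+\D_E^2)^{-1/2}=\pi^{-1}\int_0^\infty\lambda^{-1/2}a(1+\lambda+\D_E^2)^{-1}\,d\lambda$, with the integrand compact by the same reasoning and the integral norm-convergent, whereas you use the $TT^*$ trick. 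Both are standard; your version is arguably cleaner here since it avoids repeating the compactness argument for a $\lambda$-dependent family.
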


\begin{proof}
We already know from Corollary \ref{cor:D_E-sa} that $\D_E$ is essentially selfadjoint on ${\rm dom}\,\D\,\cap{\rm dom}\,\D^*$. 
From the definition of pseudo-Riemannian spectral triple, the
commutator
$$
[\D_E,a]=\frac{1+i}{2}[\D,a]+\frac{1-i}{2}[\D^*,a]
$$
is bounded for all $a\in\A$. Thus, to show that we have a spectral triple, we need to consider the resolvent condition. 
We first need to show, for $a\in\A$,  the compactness of
\begin{align*}
a(1+\la\D\ra^2+\pert)^{-1}
&=a(1+\la\D\ra^2)^{-1}-a(1+\la\D\ra^2)^{-1}\pert(1+\la\D\ra^2+\pert)^{-1}.
\end{align*}
The first term is compact by condition $4)$ in the definition of pseudo-Riemannian 
spectral triple. For the second term
we write
$$
a(1+\la\D\ra^2)^{-1}\pert=-a(1+\la\D\ra^2)^{-1}[\la\D\ra^2,\pert](1+\la\D\ra^2)^{-1}+a\pert(1+\la\D\ra^2)^{-1}.
$$
Both terms here are compact, the first by $2a)$ and $4)$, the second by $2b)$. 
Thus $a(1+\la\D\ra^2+\pert)^{-1}$ is compact.
Finally we employ the integral formula for fractional powers to complete the 
proof that we have a spectral triple. The same reasoning as
above gives the compactness of the integrand in
$$
a(1+\la\D\ra^2+\pert)^{-1/2}=\frac{1}{\pi}\int_0^\infty \lambda^{-1/2}a(1+\lambda+\la\D\ra^2+\pert)^{-1}d\lambda,
$$
and as the integral converges in norm, the left hand side is a compact operator.
Thus $(\A,\HH,\D_E)$ is a spectral triple. 
\end{proof}

\subsection{Smooth summability}
\label{sec:smooth_sum}

We now consider the smooth summability of the spectral triple $(\A,\HH,\D_E)$. 
%
%
While this can be checked directly for each example, as we do for the harmonic oscillator, we first
present a sufficient condition guaranteeing the smooth 
summability of the spectral triple $(\A,\HH,\D_E)$ 
given the smooth summability of $(\A,\HH,\D)$, with the same spectral dimension.
Our sufficient condition requires an 
additional assumption on the boundedness of 
$(1+\la\D\ra^2)(1+\D_E^2)^{-1}$. 
The harmonic oscillator (see \S\ref{sec:K-hom_harm_osc}) suggests that this condition is not necessary. 
We proceed by proving a few lemmas about the structure of pseudodifferential
operators associated to $(\A,\HH,\D)$. 

\begin{lemma}
\label{lem:smo-one}
Let $(\A,\HH,\D)$ be a 
pseudo-Riemannian spectral 
triple. 
Then 
$(1+\D_E^2)$ is an element of ${\rm OP}^2(\la\D\ra)$. 
Furthermore, if $(1+\la\D\ra^2)(1+\D_E^2)^{-1}$ is bounded, then $(1+\D_E^2)^{-1}$ is an element of ${\rm OP}^{-2}(\la\D\ra)$. 
\end{lemma}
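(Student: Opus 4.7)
The plan is to exploit the identity $\D_E^2=\la\D\ra^2+\pert$ on $\HH_\infty$, then pull out powers of $(1+\la\D\ra^2)$ and reduce both claims to properties of $\pert$ already established in Lemma \ref{lem:bound_A}, together with Lemma \ref{lem:OP-invert} and the basic calculus of regular pseudodifferential operators (in particular, the fact that $\sigma^z$ preserves each $\mathrm{OP}^r(\la\D\ra)$, noted after equation \eqref{sigma}).

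For the first statement, by the definition of $\mathrm{OP}^2(\la\D\ra)$ it is enough to show that $(1+\la\D\ra^2)^{-1}(1+\D_E^2)$ extends to a bounded operator in $\bigcap_n \mathrm{dom}\,\delta^n$. On the dense subspace $\HH_\infty$ we have
\begin{equation*}
(1+\la\D\ra^2)^{-1}(1+\D_E^2) = 1 + (1+\la\D\ra^2)^{-1}\pert .
\end{equation*}
Lemma \ref{lem:bound_A} asserts $\pert\in\mathrm{OP}^2(\la\D\ra)$, which by definition means $(1+\la\D\ra^2)^{-1}\pert\in\mathrm{OP}^0(\la\D\ra)=\bigcap_n \mathrm{dom}\,\delta^n$. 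Since the identity operator also lies there, the sum does, proving $(1+\D_E^2)\in\mathrm{OP}^2(\la\D\ra)$.

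For the second statement, consider the reverse ordering $(1+\D_E^2)(1+\la\D\ra^2)^{-1}$, which on $\HH_\infty$ equals $1+\pert(1+\la\D\ra^2)^{-1}$. Writing $\pert = (1+\la\D\ra^2)S$ with $S\in\mathrm{OP}^0(\la\D\ra)$ (as just proved), we obtain
\begin{equation*}
\pert(1+\la\D\ra^2)^{-1} = (1+\la\D\ra^2)\,S\,(1+\la\D\ra^2)^{-1} = \sigma^2(S) ,
\end{equation*}
which lies in $\mathrm{OP}^0(\la\D\ra)$ because $\sigma$ preserves this Fr\'echet space. Hence $(1+\D_E^2)(1+\la\D\ra^2)^{-1}\in\mathrm{OP}^0(\la\D\ra)$. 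By the additional hypothesis its inverse $(1+\la\D\ra^2)(1+\D_E^2)^{-1}$ is bounded, and therefore by Lemma \ref{lem:OP-invert} also lies in $\mathrm{OP}^0(\la\D\ra)$. Multiplying on the left by $(1+\la\D\ra^2)^{-1}\in\mathrm{OP}^{-2}(\la\D\ra)$ and using $\mathrm{OP}^{-2}\cdot\mathrm{OP}^0\subset\mathrm{OP}^{-2}$ gives $(1+\D_E^2)^{-1}\in\mathrm{OP}^{-2}(\la\D\ra)$.

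The only real bookkeeping issue is that the identity $1+\D_E^2 = 1+\la\D\ra^2+\pert$ is literally valid on $\HH_\infty$, whereas we need equalities of the bounded extensions; since $\HH_\infty$ is dense and all the operators after cancellation with a factor $(1+\la\D\ra^2)^{\pm 1}$ extend to bounded operators (by Lemma \ref{lem:bound_A} and Corollary \ref{lem:self-adj2}), this extension by continuity is routine and does not present a genuine obstacle.
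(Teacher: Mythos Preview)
Your proof is correct and follows essentially the same route as the paper's: write $1+\D_E^2=1+\la\D\ra^2+\pert$, invoke $\pert\in\mathrm{OP}^2(\la\D\ra)$ from Lemma~\ref{lem:bound_A} for the first claim, then use Lemma~\ref{lem:OP-invert} together with the boundedness hypothesis for the second. The only difference is that you make explicit the $\sigma^2$ conjugation needed to pass from $(1+\la\D\ra^2)^{-1}(1+\D_E^2)\in\mathrm{OP}^0(\la\D\ra)$ to $(1+\D_E^2)(1+\la\D\ra^2)^{-1}\in\mathrm{OP}^0(\la\D\ra)$, a step the paper leaves implicit.
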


\begin{proof} 
Since $\pert\in {\rm OP}^2(\la\D\ra)$ by hypothesis, 
$1+\D_E^2 = 1+\la\D\ra^2+\pert \in {\rm OP}^2(\la\D\ra)$. If furthermore $(1+\la\D\ra^2)(1+\D_E^2)^{-1}$ is bounded, it follows from Lemma \ref{lem:OP-invert} that $(1+\la\D\ra^2)(1+\D_E^2)^{-1}\in{\rm OP}^{0}(\la\D\ra)$, and hence that $(1+\D_E^2)^{-1}\in{\rm OP}^{-2}(\la\D\ra)$. 
\end{proof}




In the following discussion of smooth summability, the boundedness of 
$(1+\la\D\ra^2)(1+\D_E^2)^{-1}$ plays a crucial role. We pause to give a sufficient condition
for this boundedness to hold.

\begin{lemma}
\label{lem:use-A}
Let $(\A,\HH,\D)$ be a
pseudo-Riemannian spectral 
triple with 
$$
\Vert(1+\la\D\ra^2)^{-1/2}\pert(1+\la\D\ra^2)^{-1/2}\Vert<1.
$$ 
Then $(1+\la\D\ra^2)(1+\D_E^2)^{-1}$ lies in ${\rm OP}^0(\la\D\ra)$, and hence in particular is bounded.
\end{lemma}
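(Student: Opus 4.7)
The plan is to factor $1+\D_E^2$ so that the inverse becomes manifestly a product of operators of known $\mathrm{OP}$-orders. Set $N:=1+\la\D\ra^2$ and $A:=N^{-1/2}\pert N^{-1/2}$. From Definition \ref{lem:symm} we have $1+\D_E^2 = N+\pert$, and formally
$$
1+\D_E^2 \;=\; N^{1/2}(1+A)N^{1/2},
$$
so the target operator $N(1+\D_E^2)^{-1}$ should equal $N^{1/2}(1+A)^{-1}N^{-1/2}$, which belongs to ${\rm OP}^0(\la\D\ra)$ provided $(1+A)^{-1}$ lies in ${\rm OP}^0(\la\D\ra)$.

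Next I would verify that $(1+A)^{-1}$ does indeed lie in ${\rm OP}^0(\la\D\ra)$. By Lemma \ref{lem:bound_A} we already know that $A\in{\rm OP}^0(\la\D\ra)$ (and in particular extends to a bounded operator on $\HH$), so $1+A\in{\rm OP}^0(\la\D\ra)$ as well. The hypothesis $\|A\|<1$ gives a Neumann series $(1+A)^{-1}=\sum_{k\ge 0}(-A)^k$, proving the bounded invertibility of $1+A$. Lemma \ref{lem:OP-invert} then yields $(1+A)^{-1}\in{\rm OP}^0(\la\D\ra)$ directly.

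To finish, I would check the factorization at the level of operators. The identity $(N+\pert)\xi = N^{1/2}(1+A)N^{1/2}\xi$ holds for $\xi\in\HH_\infty$, a common invariant core. Since $1+A$ is bounded invertible with bounded inverse, the right-hand side is a closed operator on the domain $\mathrm{dom}\,N$, positive and self-adjoint there by a form-perturbation (KLMN-type) argument on the quadratic form $\xi\mapsto\la N^{1/2}\xi,(1+A)N^{1/2}\xi\ra$. By Corollary \ref{cor:D_E-sa} this closed operator must coincide with $1+\D_E^2$, hence
$$
(1+\D_E^2)^{-1} \;=\; N^{-1/2}(1+A)^{-1}N^{-1/2} \in {\rm OP}^{-2}(\la\D\ra),
$$
and multiplying on the left by $N\in{\rm OP}^2(\la\D\ra)$ gives the conclusion.

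The main obstacle is the unbounded-operator bookkeeping in the factorization step: one must argue that the closure of $N^{1/2}(1+A)N^{1/2}\!\restriction_{\HH_\infty}$ actually agrees with $1+\D_E^2$, rather than some proper self-adjoint extension. The combination of essential self-adjointness of $\D_E$ (Corollary \ref{cor:D_E-sa}) with the fact that $1+A$ is a bounded, boundedly invertible, positive sandwich of $N^{1/2}$ is what makes the identification go through cleanly; once this is in hand, the multiplication of $\mathrm{OP}$-orders is formal.
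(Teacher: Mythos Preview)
Your proposal is correct and follows essentially the same route as the paper: both factor $1+\D_E^2=N^{1/2}(1+A)N^{1/2}$ with $A=N^{-1/2}\pert N^{-1/2}$, use $\|A\|<1$ to get bounded invertibility of $1+A$, invoke Lemma~\ref{lem:bound_A} (equivalently Lemma~\ref{lem:smo-one}) to place $1+A$ in ${\rm OP}^0(\la\D\ra)$, and then apply Lemma~\ref{lem:OP-invert}. The paper simply writes the identity $(1+A)^{-1}=N^{1/2}(1+\D_E^2)^{-1}N^{1/2}$ without further comment, whereas you are more explicit about the domain bookkeeping needed to justify that factorisation; your extra care there is welcome but not a different idea.
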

\begin{proof}
We know that $(1+\la\D\ra^2)^{-1/2}\pert(1+\la\D\ra^2)^{-1/2}$ is bounded since $\pert\in{\rm OP}^2(\la\D\ra)$,
and assuming that $\Vert(1+\la\D\ra^2)^{-1/2}\pert(1+\la\D\ra^2)^{-1/2}\Vert<1$ ensures that $-1$
is not in the spectrum. Hence the operator
\begin{align*}
(1+(1+\la\D\ra^2)^{-1/2}\pert(1+\la\D\ra^2)^{-1/2})^{-1}
&=(1+\la\D\ra^2)^{1/2}(1+\D_E^2)^{-1}(1+\la\D\ra^2)^{1/2}
\end{align*}
is bounded. Observe that this also implies the boundedness of $(1+\D_E^2)^{-1/2}(1+\la\D\ra^2)^{1/2}$
and $(1+\la\D\ra^2)^{1/2}(1+\D_E^2)^{-1/2}$.

From Lemma \ref{lem:smo-one} we know that $(1+\la\D\ra^2)^{-1/2}(1+\D_E^2)(1+\la\D\ra^2)^{-1/2}$ lies in ${\rm OP}^0(\la\D\ra)$, and since this operator has bounded inverse, it follows from Lemma \ref{lem:OP-invert} that $(1+\la\D\ra^2)^{1/2}(1+\D_E^2)^{-1}(1+\la\D\ra^2)^{1/2}$ lies in ${\rm OP}^0(\la\D\ra)$. This also implies that $(1+\la\D\ra^2)(1+\D_E^2)^{-1}$ lies in ${\rm OP}^0(\la\D\ra)$.
\end{proof}

\begin{lemma}
\label{lem:ratios-1}
Let $(\A,\HH,\D)$ be a 
pseudo-Riemannian spectral 
triple 
such that $(1+\la\D\ra^2)(1+\D_E^2)^{-1}$ is bounded. 
Then the ratios
\begin{align*}
(1+\la\D\ra^2)^{-s}(1+\la\D\ra^2+\pert)^s,\ \ (1+\la\D\ra^2+\pert)^s(1+\la\D\ra^2)^{-s},\nonumber\\
(1+\la\D\ra^2+\pert)^{-s}(1+\la\D\ra^2)^s,\ \ (1+\la\D\ra^2)^s(1+\la\D\ra^2+\pert)^{-s}
\end{align*}
are bounded for all $0\leq s\in\R$.
\end{lemma}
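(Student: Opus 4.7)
The plan is to combine the pseudodifferential calculus for integer powers with the Hadamard three-lines theorem for interpolation. Set $A := 1+\la\D\ra^2$ and $B := 1+\D_E^2 = A+\pert$, both positive self-adjoint with spectrum in $[1,\infty)$. Since $A$ and $B$ are self-adjoint with bounded inverses, boundedness of $A^s B^{-s}$ coincides with that of its adjoint closure $B^{-s}A^s$, and boundedness of $A^{-s}B^s$ with that of $B^s A^{-s}$. So it suffices to prove boundedness of $A^s B^{-s}$ for all $s\geq 0$; the argument for $B^s A^{-s}$ is symmetric, invoking Corollary \ref{lem:self-adj2} to give $\|BA^{-1}\|<\infty$ in place of the lemma's hypothesis $\|AB^{-1}\|<\infty$.

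First I would establish boundedness at integer values $s=n\geq 0$ via the pseudodifferential calculus. Lemma \ref{lem:smo-one} provides $B^{-1}\in{\rm OP}^{-2}(\la\D\ra)$ under our hypothesis, while $A\in{\rm OP}^{2}(\la\D\ra)$ tautologically. The standard multiplicativity ${\rm OP}^{r}(\la\D\ra)\cdot{\rm OP}^{t}(\la\D\ra)\subset{\rm OP}^{r+t}(\la\D\ra)$ then yields $A^n\in{\rm OP}^{2n}$ and $B^{-n}=(B^{-1})^n\in{\rm OP}^{-2n}$, hence $A^n B^{-n}\in{\rm OP}^{0}(\la\D\ra)\subset\B(\HH)$ for every $n\in\N$.

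To interpolate to arbitrary $s\geq 0$, I would apply the Hadamard three-lines theorem to the operator-valued function $F(z):=A^z B^{-z}$ on the strip $0\leq\Re z\leq N$ with $N$ an integer exceeding $s$. On the left boundary $\Re z=0$, both $A^{it}$ and $B^{-it}$ are unitary, so $\|F(it)\|\leq 1$. On the right boundary $\Re z=N$, commutation within each functional calculus gives $A^{N+it}=A^{it}A^N$ and $B^{-N-it}=B^{-N}B^{-it}$, so
\[
F(N+it)=A^{it}\,(A^N B^{-N})\,B^{-it},
\]
whose norm is at most $\|A^N B^{-N}\|$ uniformly in $t\in\R$. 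For $\eta\in\HH$ and $\xi$ in a dense set of analytic vectors for $B$ (for instance, ranges of spectral projections of $B$ on compact intervals), $z\mapsto\la\eta,F(z)\xi\ra$ is holomorphic in the interior of the strip with sub-exponential growth in $|\Im z|$. Three-lines then gives
\[
\|A^s B^{-s}\|\leq\|A^N B^{-N}\|^{s/N}\quad\text{for } s\in[0,N],
\]
and since $N$ was arbitrary this proves $\|A^s B^{-s}\|<\infty$ for all $s\geq 0$.

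The main obstacle will be verifying the analyticity and growth hypotheses of three-lines in this unbounded-operator setting. This is standard but requires a careful choice of dense cores for the functional calculi of $A$ and $B$ to ensure that $F(z)$ defines an analytic family of bounded matrix elements in the interior of the strip; alternatively one can reduce to bounded operators by truncating both spectra with compactly supported spectral projections, applying three-lines for the truncated $F$, and then removing the cut-off via a limit argument.
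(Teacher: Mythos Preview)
Your proposal is correct but follows a genuinely different route from the paper. The paper proceeds by an inductive reduction: writing
\[
A^{-s}B^{s}=\sigma^{-2s+2}\!\big(A^{-1}B\big)\,A^{-(s-1)}B^{\,s-1},
\]
where $\sigma^{r}(T)=(1+\la\D\ra^{2})^{r/2}T(1+\la\D\ra^{2})^{-r/2}$ and $A^{-1}B\in{\rm OP}^{0}(\la\D\ra)$ by Lemma~\ref{lem:smo-one}, one peels off one integer at a time and reduces to $0<s<1$. There the paper invokes operator monotonicity of $t\mapsto t^{s}$ (equivalently the Heinz inequality): since $\|AB^{-1}\|$ and $\|BA^{-1}\|$ are both finite, the graph norms of $A$ and $B$ are comparable, hence $\|A^{s}\xi\|\leq c^{s}\|B^{s}\xi\|$ for $0\leq s\leq 1$, which immediately yields the four ratios.

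Your approach instead fixes boundedness at integers via the ${\rm OP}$-calculus (again through Lemma~\ref{lem:smo-one}) and interpolates by the Hadamard three-lines theorem on $0\leq\Re z\leq N$. This works, and the spectral truncation you mention is the clean way to handle the domain issues: with $P_{n}=\chi_{[1,n]}(A)$ and $Q_{m}=\chi_{[1,m]}(B)$, the function $z\mapsto P_{n}A^{z}B^{-z}Q_{m}$ is a bounded holomorphic family on the closed strip with boundary bounds independent of $n,m$, so three-lines applies and the limit $n,m\to\infty$ gives the result. The two arguments are close cousins in spirit (the Heinz inequality is itself often proved by three-lines), but the paper's real-variable route is shorter and sidesteps the analytic-vector bookkeeping, while your complex-interpolation route is more flexible and does not rely on the special form of the monotone function $t^{s}$.
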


\begin{proof}
Let $\sigma^r(T):=(1+\la\D\ra^2)^{r/2}\,T\,(1+\la\D\ra^2)^{-r/2}$ be the one parameter group
associated to $\la\D\ra$. Consider, for instance, the ratio $(1+\la\D\ra^2)^{-s}(1+\D_E^2)^s$. Then for $s\in\R$ we have
\begin{align*}
(1+\la\D\ra^2)^{-s}(1+\D_E^2)^s &= (1+\la\D\ra^2)^{-s+1}(1+\la\D\ra^2)^{-1}(1+\D_E^2)(1+\D_E^2)^{s-1} \\
&= \sigma^{-2s+2}((1+\la\D\ra^2)^{-1}(1+\D_E^2))\,(1+\la\D\ra^2)^{-s+1}(1+\D_E^2)^{s-1}\\
&=:K_s\,(1+\la\D\ra^2)^{-s+1}(1+\D_E^2)^{s-1},
\end{align*}
where $K_s\in {\rm OP}^0(\la\D\ra)$ because $(1+\la\D\ra^2)^{-1}(1+\D_E^2)\in {\rm OP}^0(\la\D\ra)$ by Lemma \ref{lem:smo-one}.
Repeating this process
shows that we can assume that $0<s<1$. Similar arguments hold for the other ratios. 
For $0<s<1$, the function $t\mapsto t^s$ is operator monotone, and elementary techniques then yield the result. 
\end{proof}

\begin{lemma}
\label{lem:ratios}
Let $(\A,\HH,\D)$ be a 
pseudo-Riemannian spectral 
triple 
such that $(1+\la\D\ra^2)(1+\D_E^2)^{-1}$ is bounded. 
Then for $s\geq0$ the ratio
\begin{align*}
(1+\la\D\ra^2)^{s}(1+\la\D\ra^2+\pert)^{-s}
\end{align*}
is an element of ${\rm OP}^0(\la\D\ra)$.
\end{lemma}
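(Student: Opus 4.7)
The plan is to show $(1+\la\D\ra^2)^s B^{-s}\in{\rm OP}^0(\la\D\ra)$ for $B := 1+\D_E^2 = 1+\la\D\ra^2+\pert$, splitting the range $s\geq0$ into a nonnegative integer case handled by induction and a fractional remainder handled by an integral representation. The seed is Lemma \ref{lem:smo-one}: the hypothesis forces $B^{-1}\in{\rm OP}^{-2}(\la\D\ra)$, hence $C:=(1+\la\D\ra^2)B^{-1}\in{\rm OP}^0(\la\D\ra)$, and then $\sigma^{z}(C)\in{\rm OP}^0(\la\D\ra)$ for all $z\in\R$ since $\sigma$ is a strongly continuous group on each ${\rm OP}^r(\la\D\ra)$. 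For $s=n$ a nonnegative integer I would induct on $n$, the case $n=0$ being trivial. For the inductive step, decompose
$$
(1+\la\D\ra^2)^n B^{-n}=(1+\la\D\ra^2)^{n-1} C\, B^{-(n-1)}=\sigma^{2(n-1)}(C)\cdot(1+\la\D\ra^2)^{n-1}B^{-(n-1)},
$$
and note that each factor lies in the Fr\'echet $*$-algebra ${\rm OP}^0(\la\D\ra)$.

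For real $s\geq0$, write $s=n+r$ with $r\in[0,1)$ and use $\sigma^{2r}$ to factor
$$
(1+\la\D\ra^2)^s B^{-s} = \sigma^{2r}\big((1+\la\D\ra^2)^n B^{-n}\big)\cdot (1+\la\D\ra^2)^r B^{-r},
$$
reducing the task to $r\in(0,1)$. For that range I would invoke the resolvent integral $B^{-r}=\frac{\sin(\pi r)}{\pi}\int_0^\infty\mu^{-r}(\mu+B)^{-1}\,d\mu$ together with the second resolvent identity for the pair $(\mu+1+\la\D\ra^2,\mu+B)$, giving
$$
(1+\la\D\ra^2)^r B^{-r} = 1 - \frac{\sin(\pi r)}{\pi}\int_0^\infty \mu^{-r}\,(1+\la\D\ra^2)^r(\mu+1+\la\D\ra^2)^{-1}\,\pert\,(\mu+B)^{-1}\,d\mu.
$$
The scalar bound $(1+x)^r/(\mu+1+x)\leq C(1+\mu)^{r-1}$ controls the $\la\D\ra$-dependent factor; $\pert\in{\rm OP}^2(\la\D\ra)$ by Lemma \ref{lem:bound_A}; and $(\mu+B)^{-1}\in{\rm OP}^{-2}(\la\D\ra)$ uniformly in $\mu\geq0$, because $(1+\la\D\ra^2)(\mu+B)^{-1}=C\cdot B(\mu+B)^{-1}$ with $\Vert B(\mu+B)^{-1}\Vert\leq 1$ and $C\in{\rm OP}^0(\la\D\ra)$. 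The resulting integrand lies in ${\rm OP}^0(\la\D\ra)$ with $\PP_{n,l}$-seminorms that are $L^1$ in $\mu$, placing the integral in ${\rm OP}^0(\la\D\ra)$.

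The main obstacle is the fractional step: Lemma \ref{lem:ratios-1} delivers only an operator-norm bound, and one must upgrade this to control of every seminorm $\PP_{n,l}$. The $\delta^k$ must be commuted across both resolvents inside the integrand, producing nested terms in $\delta^j(\pert)$ and $\delta^j(C)$ governed by assumption $2a)$ and by the ${\rm OP}^0$-membership of $C$; the care is in tracking the resulting $\mu$-dependence and confirming it remains integrable against $\mu^{-r}$. By contrast, the integer induction is purely algebraic once Lemma \ref{lem:smo-one} has been applied.
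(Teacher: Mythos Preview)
Your approach is essentially the paper's. Both arguments reduce to $0<s<1$ via the group $\sigma$ (your integer induction is exactly the reduction used in the proof of Lemma~\ref{lem:ratios-1}, to which the paper refers), then appeal to the integral formula for fractional powers and control $\delta^n$ of the integrand. Your insertion of the second resolvent identity to peel off the constant $1$ is only a cosmetic variation: applying $\delta^n$ to $(\mu+B)^{-1}=(\mu+1+\la\D\ra^2)^{-1}-(\mu+1+\la\D\ra^2)^{-1}\pert(\mu+B)^{-1}$ and expanding recovers precisely the iterated-resolvent expression the paper writes down using the formula from Lemma~\ref{lem:OP-invert}.

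Two small points are worth flagging. First, the seminorms topologising ${\rm OP}^0(\la\D\ra)$ are the $\sum_{k\leq l}\Vert\delta^k(\cdot)\Vert$, not the $\PP_{n,l}$; the latter define ${\rm OP}^0_0$. Second, and more substantively, your displayed integral identity is already problematic at the level of operator norm: the bound $\Vert(1+\la\D\ra^2)^r(\mu+1+\la\D\ra^2)^{-1}\Vert\lesssim(1+\mu)^{r-1}$ together with the merely uniform bound on $\pert(\mu+B)^{-1}$ gives an integrand of order $\mu^{-r}(1+\mu)^{r-1}\sim\mu^{-1}$ at infinity, which is not $L^1$. The paper sidesteps this for $n=0$ by invoking Lemma~\ref{lem:ratios-1} directly for boundedness and only uses the integral representation to handle $\delta^n$ with $n\geq1$; you should do the same rather than trying to read boundedness off the integral. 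For $n\geq1$ your plan to commute $\delta^k$ through both resolvents, governed by $\delta^k(\pert)\in{\rm OP}^2(\la\D\ra)$ and $C\in{\rm OP}^0(\la\D\ra)$, is exactly what the paper does.
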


\begin{proof}
As in the proof of Lemma \ref{lem:ratios-1} we can reduce the problem to the case $0<s<1$. We already know from Lemma \ref{lem:ratios-1} that $(1+\la\D\ra^2)^{s}(1+\la\D\ra^2+\pert)^{-s}$ 
is bounded. 
%
%
Let us write $T:=(1+\D_E^2)$ for brevity. 
As in the proof of Lemma \ref{lem:OP-invert}, there are (combinatorial) constants $C_{l,n,k}$ such that
\begin{multline*}
\delta^n_{\la\D\ra}((\lambda+T)^{-1}) = \sum_{1\leq l\leq n} \sum_{1\leq k_1,\ldots,k_l\leq n,|k|=n} C_{l,n,k} \\
(\lambda+T)^{-1} \delta_{\la\D\ra}^{k_1}(T) (\lambda+T)^{-1} \delta_{\la\D\ra}^{k_2} (\lambda+T)^{-1} \cdots (\lambda+T)^{-1} \delta_{\la\D\ra}^{k_l}(T) (\lambda+T)^{-1} .
\end{multline*}
Using the integral formula for fractional powers, we can then write
\begin{multline*}
(1+\la\D\ra^2)^{s}\delta^n_{\la\D\ra}(T^{-s}) = \frac{1}{\pi} \int_0^\infty \lambda^{-s} \sum_{1\leq l\leq n} \sum_{1\leq k_1,\ldots,k_l\leq n,|k|=n} C_{l,n,k} \\
(1+\la\D\ra^2)^{s}(\lambda+T)^{-1} \delta_{\la\D\ra}^{k_1}(T) 
(\lambda+T)^{-1} \delta_{\la\D\ra}^{k_2} (\lambda+T)^{-1} 
\cdots (\lambda+T)^{-1} \delta_{\la\D\ra}^{k_l}(T) (\lambda+T)^{-1} d\lambda .
\end{multline*}
Since $\delta_{\la\D\ra}^{k}(T)\in {\rm OP}^2(\la\D\ra)$ so that
$\delta_{\la\D\ra}^{k}(T) (\lambda+T)^{-1} 
= \delta_{\la\D\ra}^{k}(T)(1+\la\D\ra^2)^{-1} (1+\la\D\ra^2)T^{-1} T(\lambda+T)^{-1}$ 
is uniformly bounded in $\lambda$ for each $k$, we see that the integral 
converges to a bounded operator for all $n$, so 
$(1+\la\D\ra^2)^{s}(1+\D_E^2)^{-s}$ is in ${\rm OP}^0(\la\D\ra)$.
%
%
\end{proof}

\begin{thm}
\label{convert-smooth} 
Let $(\A,\HH,\D)$ be a smoothly summable pseudo-Riemannian spectral triple 
such that $(1+\la\D\ra^2)(1+\D_E^2)^{-1}$ is bounded. 
Then the corresponding spectral triple $(\A,\HH,\D_E)$ is smoothly summable, with the same
spectral dimension.
\end{thm}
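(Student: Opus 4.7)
The plan is to establish that the pseudodifferential calculus and the summability spaces defined via $\la\D\ra$ coincide, on the relevant operators, with those defined via $\D_E$; once that identification is in place, the smooth summability of $(\A,\HH,\D)$ transports directly to $(\A,\HH,\D_E)$.

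I would start with spectral dimension. Writing
$$
a(1+\D_E^2)^{-s/2}=a(1+\la\D\ra^2)^{-s/2}\,(1+\la\D\ra^2)^{s/2}(1+\D_E^2)^{-s/2},
$$
the right-hand factor is bounded by Lemma~\ref{lem:ratios-1}, and the symmetric identity holds with the roles of $\la\D\ra$ and $\D_E$ exchanged. Hence $a(1+\D_E^2)^{-s/2}\in\cl^1(\HH)$ if and only if $a(1+\la\D\ra^2)^{-s/2}\in\cl^1(\HH)$, so $(\A,\HH,\D_E)$ is finitely summable with the same spectral dimension $p$.

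Next I would identify the bounded-square-integrable/integrable classes. For the weights, $\varphi_s^{\D_E}(T)=\tr((1+\D_E^2)^{-s/4}T(1+\D_E^2)^{-s/4})$ can be rewritten as
$$
\varphi_s^{\D_E}(T)=\tr\!\bigl(A_s^*\,(1+\la\D\ra^2)^{-s/4}T(1+\la\D\ra^2)^{-s/4}A_s\bigr),
$$
where $A_s=(1+\la\D\ra^2)^{s/4}(1+\D_E^2)^{-s/4}$ is bounded by Lemma~\ref{lem:ratios-1}. Together with the reverse inequality this shows that the $\Q_n$-norms defined using $\la\D\ra$ and those defined using $\D_E$ are equivalent on $\B_2(\la\D\ra,p)=\B_2(\D_E,p)$, and hence by the definition of the $\PP_n$-norms on products we obtain $\B_1(\la\D\ra,p)=\B_1(\D_E,p)$ as Fr\'echet spaces.

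For the $\delta$-smooth elements I would compare the two pseudodifferential calculi. By Lemma~\ref{lem:smo-one} both $(1+\D_E^2)$ and its inverse lie in ${\rm OP}^{\pm2}(\la\D\ra)$; using the integral representation of fractional powers as in the proof of Lemma~\ref{lem:ratios}, together with Lemma~\ref{lem:OP-invert}, one then shows $(1+\D_E^2)^{\pm 1/2}\in{\rm OP}^{\pm1}(\la\D\ra)$, and more generally ${\rm OP}^r(\la\D\ra)={\rm OP}^r(\D_E)$. Consequently $\bigcap_n\dom\delta^n_{\la\D\ra}=\bigcap_n\dom\delta^n_{\D_E}$, and since the derivation $\delta_{\D_E}$ maps into $\B_1(\D_E,p)=\B_1(\la\D\ra,p)$ whenever $\delta_{\la\D\ra}$ does (by expanding $\delta_{\D_E}(T)$ into commutators with $(1+\D_E^2)^{1/2}\in{\rm OP}^1(\la\D\ra)$ and applying Taylor expansion, Proposition~\ref{TE}), we obtain ${\rm OP}^r_0(\la\D\ra)={\rm OP}^r_0(\D_E)$, and in particular $\B_1^\infty(\la\D\ra,p)=\B_1^\infty(\D_E,p)$.

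Finally I assemble the conclusion. The smooth summability hypothesis for $(\A,\HH,\D)$ gives $S^0=\A\cup[\D,\A]\cup[\D^*,\A]\subset{\rm OP}^0_0(\la\D\ra)=\B_1^\infty(\la\D\ra,p)$. Because
$$
[\D_E,a]=\tfrac{1+i}{2}[\D,a]+\tfrac{1-i}{2}[\D^*,a],
$$
we get $\A\cup[\D_E,\A]\subset\B_1^\infty(\la\D\ra,p)=\B_1^\infty(\D_E,p)$, which, combined with finite summability of spectral dimension $p$, is exactly the assertion that $(\A,\HH,\D_E)$ is smoothly summable.

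The principal obstacle is the identification ${\rm OP}^r_0(\la\D\ra)={\rm OP}^r_0(\D_E)$: the derivations $\delta_{\la\D\ra}$ and $\delta_{\D_E}$ are genuinely different, and bridging them requires showing that $(1+\D_E^2)^{1/2}(1+\la\D\ra^2)^{-1/2}$ and its inverse lie in ${\rm OP}^0(\la\D\ra)$. This forces one to push the resolvent-integral technique used in Lemmas~\ref{lem:ratios-1} and \ref{lem:ratios} through for iterated $\delta_{\la\D\ra}$, using the uniform-in-$\lambda$ bounds that $\delta^k_{\la\D\ra}(1+\D_E^2)\in{\rm OP}^2(\la\D\ra)$ afforded by the smoothness part of Definition~\ref{delta-phi-semi}; the hypothesis that $(1+\la\D\ra^2)(1+\D_E^2)^{-1}$ is bounded is exactly what makes these estimates close.
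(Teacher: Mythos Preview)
Your first two steps---equality of spectral dimension and the identification $\B_1(\la\D\ra,p)=\B_1(\D_E,p)$---are correct and match the paper's argument. The gap is in the third step: the claimed identification ${\rm OP}^r_0(\la\D\ra)={\rm OP}^r_0(\D_E)$, and in particular $\B_1^\infty(\la\D\ra,p)=\B_1^\infty(\D_E,p)$, does not follow from the hypotheses. Even granting $(1+\D_E^2)^{1/2}\in{\rm OP}^1(\la\D\ra)$, the abstract calculus gives no commutator estimate $[{\rm OP}^1(\la\D\ra),{\rm OP}^0(\la\D\ra)]\subset{\rm OP}^0(\la\D\ra)$. Concretely, for general $T\in{\rm OP}^0(\la\D\ra)$ one has $[\D_E^2,T]=[\la\D\ra^2,T]+[\pert,T]$; the first summand lies in ${\rm OP}^1(\la\D\ra)$, but since $\pert$ is only known to lie in ${\rm OP}^2(\la\D\ra)$, the commutator $[\pert,T]$ is a priori only in ${\rm OP}^2(\la\D\ra)$, so $R_{\D_E}(T)$ need not be bounded. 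Notice also that your argument never invokes the conditions $S^n\subset{\rm OP}^n_0(\la\D\ra)$ for $n\geq1$ from Definition~\ref{delta-phi-semi}; those conditions are there precisely to control $[\pert,\cdot]$ on the relevant operators, and a proof that ignores them should arouse suspicion.

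The paper avoids this obstacle by \emph{not} attempting to identify the two calculi globally. Instead it works only with $T\in S^0$ and observes that the iterated commutator $T^{(n)}:=[\D_E^2,\cdot]^n(T)$ expands, via $[\D_E^2,\cdot]=[\la\D\ra^2,\cdot]+[\pert,\cdot]$, into a finite linear combination of elements of $S^n$, which by hypothesis lies in ${\rm OP}^n_0(\la\D\ra)$. Then
\[
R^n_{\D_E}(T)=T^{(n)}(1+\D_E^2)^{-n/2}=T^{(n)}(1+\la\D\ra^2)^{-n/2}\cdot(1+\la\D\ra^2)^{n/2}(1+\D_E^2)^{-n/2},
\]
with the first factor in ${\rm OP}^0_0(\la\D\ra)$ and the second in ${\rm OP}^0(\la\D\ra)$ by Lemma~\ref{lem:ratios}; hence $R^n_{\D_E}(T)\in\B_1(\la\D\ra,p)=\B_1(\D_E,p)$ for all $n$, giving $T\in\B_1^\infty(\D_E,p)$. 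Your final assembly step then finishes the proof. The moral is that the $S^n$ hierarchy in Definition~\ref{delta-phi-semi} is doing real work, supplying exactly the order-drop for $[\pert,\cdot]$ on algebra elements that fails for arbitrary $T\in{\rm OP}^0_0(\la\D\ra)$.
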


\begin{proof}
Since the operators $(1+\D_E^2)^{s/2}(1+\la\D\ra^2)^{-s/2}$ and $(1+\la\D\ra^2)^{s/2}(1+\D_E^2)^{-s/2}$ are bounded (by Lemma \ref{lem:ratios-1}), we see that $a(1+\la\D\ra^2)^{-s/2}\in\cl^1(\HH)$ implies $a(1+\D_E^2)^{-s/2}\in\cl^1(\HH)$ and conversely. Hence $(\A,\HH,\D_E)$ is finitely summable, with the same spectral dimension. 

Similarly, it is straightforward to show that 
$\B_2(\D_E,p)$ coincides with $\B_2(\la\D\ra,p)$, and so also that
$\B_1(\D_E,p)=\B_1(\la\D\ra,p)$. 
Now suppose that $T\in S^0$, which by assumption is contained in ${\rm OP}^0_0(\la\D\ra)$. Let us write $T^{(0)} := T$ and $T^{(n)} := [\D_E^2,T^{(n-1)}] = [\la\D\ra^2,T^{(n-1)}] + [\pert,T^{(n-1)}]$. It then follows that $T^{(n)}$ is a finite linear combination of elements of $S^n$, which by assumption is contained in ${\rm OP}^n_0(\la\D\ra)$. Because we can write
$$
R^k_{\D_E}(T) = T^{(k)} (1+\D_E^2)^{-k/2} = T^{(k)} (1+\la\D\ra^2)^{-k/2} (1+\la\D\ra^2)^{k/2} (1+\D_E^2)^{-k/2} ,
$$
and because $(1+\la\D\ra^2)^{k/2} (1+\D_E^2)^{-k/2}$ lies in ${\rm OP}^0(\la\D\ra)$ by Lemma \ref{lem:ratios}, we see that $R^k_{\D_E}(T)$ is an element of ${\rm OP}^0_0(\la\D\ra) = \B_1^\infty(\la\D\ra,p)$, so in particular $R^k_{\D_E}(T) \in \B_1(\la\D\ra,p) = \B_1(\D_E,p)$ for all $k$. Therefore $T$ is an element of $\B_1^\infty(\D_E,p)$, and hence $S^0 \subset \B_1^\infty(\D_E,p)$. Since $\A\cup[\D_E,\A]$ consists of linear combinations of elements of $S^0$, we conclude that $\A\cup[\D_E,\A]$ is also contained in $\B_1^\infty(\D_E,p)$, and hence that $(\A,\HH,\D_E)$ is smoothly summable. 
\end{proof}

%
%
%
%

Given a pseudo-Riemannian spectral triple $(\A,\HH,\D)$ we have seen
that there is
a more or less canonical construction of a spectral triple
$(\A,\HH,\D_E)$, and
this construction preserves the property of smooth summability needed for the local index formula.

This means that to each smoothly summable pseudo-Riemannian spectral
triple $(\A,\HH,\D)$ we can associate a $K$-homology class $[(\A,\HH,\D_E)]\in
K^*(A)$, where $A$ is the norm completion of $\A$, and compute the pairing of this class with
$K_*(A)$ using the (nonunital) local index formula, \cite{CGRS2}. 

\begin{rem}
Theorem \ref{convert} shows that the operator 
$\D_E=\frac{e^{i\pi/4}}{\sqrt{2}}(\D-i\D^*)$
can be used to obtain a spectral triple, where $\D_E^2=\la\D\ra^2+\pert$. 
However the proof is exactly the same for the
operator $\widetilde{\D_E}=\frac{e^{-i\pi/4}}{\sqrt{2}}(\D+i\D^*)$, with 
$\widetilde{\D_E}^2=\la\D\ra^2-\pert$. 
\end{rem}

This means that we potentially have two $K$-homology classes 
associated to a pseudo-Riemannian spectral
triple. In Section \ref{LIT} we will see a refinement of the definition, 
from \cite{PS}, which guarantees that these two classes are in fact negatives of each other.

\subsection{Examples}

We present a few examples showing what happens when a pseudo-Riemannian
spectral triple is converted into a spectral triple using Theorem
\ref{convert}. 

\subsubsection{Pseudo-Riemannian manifolds}

We have already shown in Section \ref{sec:classical} that for pseudo-Riemannian
manifolds which arise from complete Riemannian manifolds of bounded geometry, 
the triple $\big(C_c^\infty(M), L^2(M,S), \sD\big)$ is a pseudo-Riemannian spectral triple. 
Theorem \ref{convert} then returns the self-adjoint operator $\sD_E$. 
(Actually the signs we have used mean that we obtain $\sD_E=\Re\sD-\Im\sD$,
but by the remark above we still get a smoothly summable spectral triple. Using $\frac{e^{-i\pi/4}}{\sqrt{2}}(\sD+i\sD^*)$
would return the operator $\widetilde\sD_E=\Re\sD+\Im\sD$.) 

In these examples the operator $\pertsD$ is a first order 
differential operator, and so $\sD_E^2 = \la\sD\ra^2 - \pertsD$ is a second order 
positive elliptic differential operator. This implies that 
$(1+\la\sD\ra^2)(1+\sD_E^2)^{-1}$ is bounded, since 
$(1+\sD_E^2)^{-1}$ is an order $-2$ pseudodifferential operator.
Then Theorem \ref{convert-smooth} tells us that the 
associated spectral triple is smoothly summable.

\subsubsection{Finite geometries}
With the same notation as in \S\ref{subsub:finite}, we find the operator
$$
\D_E = \frac{e^{i\pi/4}}{\sqrt 2} \bma 0 &-iB^*\\ B&0\ema .
$$

\subsubsection{First order differential operators}

With the same notation as in \S\ref{subsub:1st-order}, we find that
$$
\D_E=\sum_j\tilde{M}_j\partial_j +\frac{1}{2}(K+K^*)+\frac{i}{2}(K-K^*)
$$
where $\tilde{M}_j=M_j$ if $M_j^*=-M_j$ and $\tilde{M}_j=iM_j$ if $M_j=M_j^*$.
Since $\pert$ is a first order differential operator in these examples, and $\D_E^2$ is
second order and uniformly elliptic, one can show that
$\pert(1+\D_E^2)^{-1}$ is bounded, and Theorem \ref{convert-smooth} gives us a smoothly summable spectral
triple.
\subsection{The $K$-homology class of the harmonic oscillator}
\label{sec:K-hom_harm_osc}

Here we consider the $K$-homology class of the spectral triple obtained
from the harmonic oscillator.  Theorem \ref{convert} gives
$$
\D_E=i\frac{d}{dx}+x, \quad \D_E^2=-\frac{d^2}{dx^2}+x^2+i(1+2x\frac{d}{dx}).
$$

We let $C_1^\infty(\R)$ be the smooth functions all of whose derivatives are integrable on $\R$. 
We have seen in Proposition \ref{prop:harm_osc_smooth} that the pseudo-Riemannian spectral triple 
$(C_1^\infty(\R),L^2(\R),\frac{d}{dx}\,+x)$ is smoothly summable with spectral dimension $p=1$.

In order to conclude from Theorem \ref{convert-smooth} that the 
spectral triple $(C_1^\infty(\R),L^2(\R),\D_E)$ is also smoothly 
summable, we would need to check that $(1+\la\D\ra^2)(1+\D_E^2)^{-1}$ is a 
bounded operator. We have been unable to prove this, and at present
have no reason to believe it is true.

On the other hand, we can simply check that 
$C_1^\infty(\R)\subset B_1^\infty(\D_E,1)$. However, this follows in the same way 
as in the proof of Proposition \ref{prop:harm_osc_smooth}. 
The integral kernels of $e^{-t\D_E^2}$ and $(1+\D_E^2)^{-s/2}$ are given by
\begin{align*}
k_t(x,y)&=\frac{1}{2\sqrt{\pi t}}e^{-(x-y)^2/4t + i(x^2-y^2)/2}, &
k_s(x,y)&=\frac{1}{\Gamma(s/2)}\int_0^\infty t^{s/2-1}e^{-t}k_t(x,y)dt.
\end{align*}
Then for any $g\in C_1^\infty(\R)$ one finds that
\begin{align*}
{\rm Trace}(g(1+\D_E^2)^{-s/2})
=\frac{\Gamma(s/2-1/2)}{2\sqrt{\pi}\Gamma(s/2)}\int_\R g(x)\,dx.
\end{align*}
Thus the spectral dimension is $\geq 1$, and taking $g(x)=e^{-x^2}$ shows that the spectral dimension
is precisely $1$. The smoothness is an easy check, using the same computations in the
proof of Proposition \ref{prop:harm_osc_smooth}. To show that the spectral triple is smoothly summable,
we observe that for all $f\in C_1^\infty(\R)$ we have 
$
[\D_E^2,f]=-f''+2if'\,\D_E.
$
Then a straightforward induction shows that $C_1^\infty(\R)\subset B_1^\infty(\D_E,1)$.
Hence the spectral triple $(C_1^\infty(\R),L^2(\R),\D_E)$ is also smoothly 
summable with spectral dimension $1$. We can then apply the local index formula of 
\cite[Theorem 4.33]{CGRS2}.
%
%
%
%

For a unitary $u$ in the unitization
of $C_1^\infty(\R)$, the local index formula computes the pairing of the class of the spectral
triple with the $K$-theory class of $u$ as
$$
\la [u],[(C^\infty_1(\R),L^2(\R),\D_E)]\ra=-\lim_{s\to1/2}(s-1/2){\rm Trace}(u^*[\D_E,u](1+\D_E^2)^{-s}).
$$
The odd $K$-theory of the real line is $K_1(C_0(\R))=\Z$.
For $m\in\Z$ we choose representatives of these classes to be  $u=e^{2im\tan^{-1}(x)}$, and this gives $u^*[\D_E,u]=\frac{-2m}{1+x^2}=:g_m(x)$. Using the trace calculations above we have
\begin{multline*}
\lim_{s\to1}\frac{s-1}2{\rm Trace}(u^*[\D_E,u](1+\D_E^2)^{-s/2})
=\lim_{s\to1}\frac{1}{2\sqrt{\pi}\Gamma(s/2)}
\int_\R g(x)\int_0^\infty \frac{s-1}2t^{s/2-1/2-1}e^{-t}dt\,dx\\
=\lim_{s\to1}\frac{1}{2\sqrt{\pi}\Gamma(s/2)}
\int_\R g(x)\int_0^\infty \frac{d}{dt}(t^{s/2-1/2})e^{-t}dt\,dx
=\frac{-m}{\pi}\int_\R \frac{1}{1+x^2}dx=-m.
\end{multline*}
Hence the spectral triple $(C_1^\infty(\R),L^2(\R),i\frac{d}{dx}+x)$ 
has a nontrivial $K$-homology class, and
it coincides with the class of $(C_1^\infty(\R),L^2(\R),i\frac{d}{dx})$, see \cite[page 298]{HR}. 
From the perspective of principal
symbols this is not surprising, however the unboundedness of the 
perturbation means that the result is not immediate.

\subsection{Application to compact manifolds}\label{LIT}

We give an index theoretic application in a special case closely
related to the  Lorentzian spectral triples of \cite{PS}.

\begin{defn}\label{beta-defn} A Lorentz-type
spectral triple   $(\A,\HH,\D,\beta)$ is given by a $*$-algebra  $\A$
represented on the Hilbert space $\HH$ as bounded operators,
along with a bounded operator $\beta$ on $\HH$ and a densely
defined closed operator $\D:{\rm dom}\,\D\subset\HH\to\HH$ such
that

0) ${\rm dom}\D^*\D\cap{\rm dom}\D\D^*$ is dense in $\HH$ and $\la\D\ra^2$ is 
essentially self-adjoint on this domain;

1) $\beta=-\beta^*$, $\beta^2=-1$, $\beta a=a\beta$ for all 
$a\in\A$ and $\beta\D$ is essentially self-adjoint on 
${\rm dom}\D^*\cap{\rm dom}\D$;

2a) $\beta[\D^2,\beta]:\HH_\infty\to\HH_\infty$ and 
$[\la\D\ra^2, \beta[\D^2,\beta]\,]\in{\rm OP}^2(\la\D\ra)$;

2b) $a\beta[\D^2,\beta](1+\la\D\ra^2)^{-1}\in \K(\HH)$ for all $a\in\A$;

3) 
$[\D,a]$ and $[\D^*,a]$ extend to bounded operators on $\HH$ for all $a\in\A$;

4) $a(1+\la\D\ra^2)^{-1/2}\in\K(\HH)$ for all $a\in\A$.

The triple is said to be even if there exists $\Gamma\in\B(\HH)$ such that
$\Gamma=\Gamma^*$, $\Gamma^2=1$, $\Gamma a=a\Gamma$ for all $a\in\A$,
$\beta\Gamma+\Gamma\beta=0$ and
$\Gamma\D+\D\Gamma=0$. Otherwise the triple is said to be odd.
\end{defn}


\begin{rem}
As noted in \cite[page 5]{PS}, this is not really capturing Lorentzian signature, but
rather that the number of timelike directions is odd. This can be refined using a real structure.
\end{rem}

The essential self-adjointness of $\beta\D$ implies that $\D^*=\beta\D\beta$ on
${\rm dom}\,\D^*\cap{\rm dom}\,\D$,
and that $\beta$ preserves this domain. In particular, $\D$ is Krein self-adjoint as in 
Section \ref{subsec:Krein} for the fundamental symmetry ${\mathcal J}=i\beta$.
The condition on $\beta[\D^2,\beta]$ can be understood
in terms of the definition of pseudo-Riemannian spectral triples via
$$ 
\beta[\D^2,\beta]=\beta\D^2\beta+\D^2=-\D^{*2}+\D^2.
$$
In particular a
Lorentz-type spectral triple  is a
pseudo-Riemannian spectral triple.

\begin{lemma}
Let $(\A,\HH,\D,\beta)$ be a Lorentz-type
spectral triple. Then the $K$-homology classes arising from the operators 
$$
\D_E=\frac{e^{i\pi/4}}{\sqrt{2}}(\D-i\D^*)\ \ \mbox{and}\ \ \widetilde{\D_E}=\frac{e^{-i\pi/4}}{\sqrt{2}}(\D+i\D^*)
$$
are negatives of one another.
\end{lemma}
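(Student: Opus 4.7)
The plan is to show that $\beta$ itself implements a unitary equivalence of spectral triples between the one built from $\D_E$ and the one built from $-\widetilde{\D_E}$, and then to invoke the standard parity-dependent behaviour of $K$-homology under negation of the Dirac operator.

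First I would extract the algebraic consequences of the Lorentz-type hypotheses on $\beta$. Since $\beta^*=-\beta$ and $\beta^2=-1$, the operator $\beta$ is unitary with $\beta^{-1}=-\beta$. Essential self-adjointness of $\beta\D$ on $\dom\D^*\cap\dom\D$ forces $(\beta\D)^*=\beta\D$, i.e.\ $-\D^*\beta=\beta\D$; multiplying by $\beta^{-1}=-\beta$ on the right gives $\D^*=\beta\D\beta$, and consequently
\[
\beta\D\beta^{-1}=-\D^*, \qquad \beta\D^*\beta^{-1}=-\D .
\]

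Next I would apply $\beta(\cdot)\beta^{-1}$ to $\D_E=\tfrac{e^{i\pi/4}}{\sqrt 2}(\D-i\D^*)$. Using the identity $i e^{i\pi/4}=e^{3i\pi/4}=-e^{-i\pi/4}$, this is a one-line check:
\[
\beta\D_E\beta^{-1}=\tfrac{e^{i\pi/4}}{\sqrt 2}(-\D^*+i\D)=\tfrac{ie^{i\pi/4}}{\sqrt 2}(\D+i\D^*)=-\tfrac{e^{-i\pi/4}}{\sqrt 2}(\D+i\D^*)=-\widetilde{\D_E}.
\]
Because $\beta$ is unitary, commutes with the representation of $\A$, and preserves $\dom\D\cap\dom\D^*$, it sends the essentially self-adjoint operator $\D_E$ to the essentially self-adjoint operator $-\widetilde{\D_E}$ on the same core, giving a unitary equivalence of spectral triples $(\A,\HH,\D_E)\sim(\A,\HH,-\widetilde{\D_E})$.

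Finally I would pass to $K$-homology and handle the two parities separately. In the odd case one has $[(\A,\HH,-T)]=-[(\A,\HH,T)]$ (negation of a self-adjoint Fredholm operator flips the Kasparov class), so the unitary equivalence above gives $[\D_E]=[-\widetilde{\D_E}]=-[\widetilde{\D_E}]$ directly. In the even case, the assumption $\beta\Gamma+\Gamma\beta=0$ means that $\beta$ simultaneously sends $\Gamma$ to $-\Gamma$, so the equivalence is between $(\A,\HH,\D_E,\Gamma)$ and $(\A,\HH,-\widetilde{\D_E},-\Gamma)$. Since $\Gamma$ itself is a unitary commuting with $\A$ and anticommuting with $\widetilde{\D_E}$, a further conjugation by $\Gamma$ yields $(\A,\HH,\widetilde{\D_E},-\Gamma)$, and reversing the grading is precisely the operation that negates the $K^0$-class; again $[\D_E]=-[\widetilde{\D_E}]$.

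The main obstacle is less one of mathematical depth than of careful bookkeeping: I must verify that $\beta$ intertwines the full analytic data (domains, cores, essential self-adjointness of $\D_E$ and $\widetilde{\D_E}$, boundedness of the commutators with $\A$) and not merely the algebraic relations, and then deploy the sign conventions for negation of $K$-homology classes consistently in both parities. The anticommutation relation between $\beta$ and $\Gamma$ in the even case is the one genuinely subtle point, as it is what converts a prima facie equivalence of cycles with Dirac operators $\D_E$ and $-\widetilde{\D_E}$ into the desired statement about the \emph{negatives} of classes.
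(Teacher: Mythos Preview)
Your proof is correct and follows essentially the same approach as the paper: the key step in both is the computation $\beta\D_E\beta^{-1}=\beta\D_E\beta^*=-\widetilde{\D_E}$, yielding a unitary equivalence of $(\A,\HH,\D_E)$ with $(\A,\HH,-\widetilde{\D_E})$. You supply considerably more detail than the paper does---in particular the derivation of $\D^*=\beta\D\beta$ from the hypotheses and the parity-dependent passage from this unitary equivalence to the negation of $K$-homology classes (including the role of the anticommutation $\beta\Gamma+\Gamma\beta=0$ in the even case)---whereas the paper simply records the conjugation identity and leaves the $K$-homology conclusion implicit.
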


\begin{proof}
One simply computes that $\beta \D_E\beta^*=-\widetilde{\D_E}$, 
which shows that $(\A,\HH,\D_E)$ is unitarily equivalent to $(\A,\HH,-\widetilde{\D_E})$.
\end{proof}

\begin{prop} Let $(\A,\HH,\D,\beta,\Gamma)$ be an even Lorentz-type
spectral
triple with  $\A$ unital. Assume that there
exists at least one even function $f$ with $f(0)\neq 0$ and $f(\D_E)$
trace class. If
$\D^2-\D^{*2}=\beta[\D^2,\beta]=0$, then
$$ 
{\rm Index}\left(\frac{1-\Gamma}{2}\D_E\frac{1+\Gamma}{2}\right)=0.
$$
That  is, the pairing of $[(\A,\HH,\D_E)]\in K^0(\A)$ with the class
$[1]\in K_0(\A)$ is zero.
\end{prop}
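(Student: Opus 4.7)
The plan is a McKean-Singer style vanishing argument using the bounded operator $\beta$ to flip the sign of a certain trace. First I would exploit the hypothesis $\D^2 - \D^{*2} = 0$: this forces $\pert = 0$ and hence $\D_E^2 = \la\D\ra^2 = \widetilde{\D_E}^2$. Combined with the identity $\beta\D_E\beta^{-1} = -\widetilde{\D_E}$ already established in the preceding lemma, this gives $\beta\D_E^2\beta^{-1} = \widetilde{\D_E}^2 = \D_E^2$, so $\beta$ commutes with $\D_E^2$. Because $f$ is even, the spectral theorem lets us write $f(\D_E) = g(\D_E^2)$, so $\beta$ also commutes with $f(\D_E)$.

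Next I would establish a McKean-Singer identity. Since $\A$ is unital, condition (4) of Definition \ref{no-beta-defn} yields that $(1+\la\D\ra^2)^{-1/2} = (1+\D_E^2)^{-1/2}$ is compact, so the (essentially) self-adjoint operator $\D_E$ has purely discrete spectrum with finite multiplicities. The relation $\Gamma\D + \D\Gamma = 0$ together with its adjoint gives $\Gamma\D_E + \D_E\Gamma = 0$, so for each $\lambda \neq 0$ the operator $\Gamma$ is an isomorphism between the $\lambda$- and $(-\lambda)$-eigenspaces of $\D_E$. A direct spectral computation (using that $f$ is even so $f(\lambda) = f(-\lambda)$, and that $f(\D_E)$ is trace class) then yields
$$
\text{Index}\!\left(\tfrac{1-\Gamma}{2}\D_E\tfrac{1+\Gamma}{2}\right) = \frac{1}{f(0)}\,\text{Tr}\bigl(\Gamma f(\D_E)\bigr),
$$
because contributions from the paired $\pm\lambda$ eigenspaces cancel and only $\ker\D_E$ contributes.

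Finally I would use $\beta$ to show the trace above vanishes. Since $f(\D_E)$ is trace class and $\beta$ is bounded with $\beta^{-1} = -\beta$, cyclicity of the trace gives
$$
\text{Tr}\bigl(\Gamma f(\D_E)\bigr) = \text{Tr}\bigl(\beta^{-1}\Gamma f(\D_E)\beta\bigr) = \text{Tr}\bigl(\beta^{-1}\Gamma\beta\cdot f(\D_E)\bigr) = -\text{Tr}\bigl(\Gamma f(\D_E)\bigr),
$$
where the second equality uses that $\beta$ commutes with $f(\D_E)$ and the third uses $\beta\Gamma + \Gamma\beta = 0$. Hence $\text{Tr}(\Gamma f(\D_E)) = 0$, and the index vanishes.

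The main obstacle is justifying the McKean-Singer identity from just a single even trace class function $f(\D_E)$ rather than a heat semigroup, but discreteness of the spectrum of $\D_E$ reduces this to an elementary bookkeeping of eigenvalue pairings; every other step is a direct application of the hypotheses.
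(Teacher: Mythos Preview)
Your proposal is correct and follows essentially the same approach as the paper: both show $\beta$ commutes with $\D_E^2$ (and hence with $f(\D_E)$ for $f$ even), invoke the McKean--Singer formula, and then use cyclicity of the trace together with $\beta\Gamma+\Gamma\beta=0$ to force $\mathrm{Tr}(\Gamma f(\D_E))=0$. The only difference is that the paper cites \cite{CPRS3} for the McKean--Singer identity, whereas you sketch its proof directly from the discreteness of the spectrum; this is just a matter of detail, not of strategy.
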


\begin{rem}
By analogy with the classical case we call $\beta$
Lorentz harmonic, since it commutes with the Laplacian of the indefinite metric. 
The hypothesis on the summability of $\D_E$ is implied by  
$\theta$-summability or finite summability of course.
\end{rem}

\begin{proof} 
Since $\D^2-\D^{*2}$ vanishes, the fundamental symmetry $\beta$ commutes with 
$\D_E^2 = \la\D\ra^2$, and so also with any function of $\D_E^2$.
The index can be computed using the McKean-Singer formula, and we
refer to \cite{CPRS3} for this version. For any even function $f$ not
vanishing at $0$ and such that $f(\D_E)$ is trace class we have
$$ 
{\rm Index}\left(\frac{1-\Gamma}{2}\D_E\frac{1+\Gamma}{2}\right)
=\frac{1}{f(0)}\mbox{Trace}(\Gamma f(\D_E)).
$$
Then using $\beta^2=-1$, 
$\beta\Gamma+\Gamma\beta=0$,
$\beta\D_E^2=\D_E^2\beta$, 
and cyclicity of the trace,
it is straightforward to show that $\mbox{Trace}(\Gamma f(\D_E))$ must vanish identically. 
Hence the index vanishes, and the proof is complete.
\end{proof}

We thank Christian B\"{a}r for the following example.
\begin{cor} 
There exists a compact spin manifold which admits a
time-oriented Lorentzian metric (and so has zero Euler characteristic) but
does not possess any Lorentz harmonic fundamental symmetries. In particular,
it has no nowhere vanishing Lorentz harmonic one-forms.
\end{cor}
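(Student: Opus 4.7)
The plan is to reduce this to the preceding proposition. Let $M$ be a compact even-dimensional spin manifold equipped with a time-oriented Lorentzian metric $g$. Then the triple $\big(C^\infty(M), L^2(M,S), \sD, \beta, \Gamma\big)$—with $\sD$ the pseudo-Riemannian Dirac operator, $\Gamma$ the standard chirality, and $\beta$ the fundamental symmetry built from a chosen timelike line field as in Section \ref{subsec:Krein}—is an even Lorentz-type spectral triple with unital algebra. Compactness of $M$ together with ellipticity of $\sD_E$ makes $e^{-t\sD_E^2}$ trace class for all $t>0$, so the trace-class hypothesis of the proposition is automatic. By the Wick-rotation computation of Section \ref{sec:classical}, $\sD_E$ has the same principal symbol as the Riemannian Dirac operator for $g_E$ up to a Clifford-unitary gauge on the timelike directions, hence represents the same $K$-homology class; by Atiyah--Singer, its index with respect to $\Gamma$ equals $\pm\hat A(M)$. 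If $\beta$ were Lorentz harmonic (equivalently $\sD^2 = \sD^{*2}$), the proposition would force this index, and hence $\hat A(M)$, to vanish.

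Thus it suffices to produce a compact spin manifold with $\chi(M) = 0$ and $\hat A(M) \ne 0$. A concrete choice is $M := K3\, \#\, 12\,(S^1 \times S^3)$, a compact spin $4$-manifold. The formula $\chi(X\#Y) = \chi(X)+\chi(Y)-2$ for oriented $4$-manifolds, together with $\chi(S^1 \times S^3) = 0$ and $\chi(K3) = 24$, gives $\chi(M) = 24 - 2\cdot 12 = 0$. The signature is additive under oriented connected sum and $\sigma(S^1 \times S^3) = 0$, so Hirzebruch's relation $\hat A(N) = -\sigma(N)/8$ for spin $4$-manifolds yields $\hat A(M) = \hat A(K3) = -(-16)/8 = 2 \ne 0$. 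Since $\chi(M) = 0$, Poincar\'e--Hopf supplies a nowhere vanishing vector field on $M$, which we may declare to be timelike to construct the required time-oriented Lorentzian metric; the previous paragraph then shows that this $M$ admits no Lorentz harmonic fundamental symmetry.

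For the final clause, a nowhere vanishing Lorentz harmonic $1$-form $\omega$ on $M$ would give rise to such a $\beta$: its $g$-dual vector field $\omega^\sharp$, normalised to unit length, defines a timelike line field, which determines a spacelike reflection and hence a fundamental symmetry $\beta = i\gamma(\omega^\sharp)$, with Lorentz harmonicity of $\omega$ translating into $[\sD^2,\beta] = 0$. So the nonexistence of Lorentz harmonic fundamental symmetries excludes such an $\omega$ as well.

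The step I expect to be most delicate is the identification of the index of $\sD_E$ with $\hat A(M)$. One must track the signs and unitary twists in $\sD_E = \tfrac{e^{i\pi/4}}{\sqrt 2}(\sD - i\sD^*)$ carefully and argue that the resulting operator defines the same $K$-homology class as the Riemannian Dirac operator of $g_E$; if necessary, one can invoke the remark after Theorem \ref{convert} to replace $\sD_E$ by $\widetilde{\sD_E} = \tfrac{e^{-i\pi/4}}{\sqrt 2}(\sD + i\sD^*)$, which in the product-spacetime model of the Wick-rotation remark is manifestly the Riemannian Dirac operator. A secondary, but straightforward, point is verifying the claimed correspondence between Lorentz harmonicity of the $1$-form and the algebraic condition $\beta[\sD^2,\beta] = 0$.
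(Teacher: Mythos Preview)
Your proof is correct and follows essentially the same strategy as the paper: reduce to the preceding proposition by exhibiting a compact spin $4$-manifold with $\chi=0$ and $\hat A\neq 0$, then argue that a Lorentz harmonic fundamental symmetry would force the index of $\sD_E$, and hence $\hat A(M)$, to vanish.

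The main difference is the choice of example. The paper takes $M = (S^2\times F_{13})\,\#\,K3$, where $F_{13}$ is a genus-$13$ surface, and computes $\chi(M)=26-2g=0$ and $\hat A(M)=\hat A(K3)=2$ using additivity of $\hat A$ under connected sum. Your choice $M=K3\,\#\,12(S^1\times S^3)$ works equally well and is arguably cleaner: the factors $S^1\times S^3$ contribute nothing to the signature, and the Euler characteristic computation is immediate. Both constructions exploit exactly the same mechanism.

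You are also more explicit than the paper about the identification of the index of $\sD_E$ with $\hat A(M)$. The paper simply asserts that $\sD_E$ ``is the Dirac operator of the Riemannian metric $\tilde g_E$'', which, as the Wick-rotation remark makes clear, is literally true only in the product case; in general $\sD_E$ differs from the Riemannian Dirac operator by a zeroth-order term. Your observation that $\sD_E$ has the same principal symbol (hence the same $K$-homology class and index) is the correct justification, and flagging it as the delicate step is appropriate. The secondary point about translating Lorentz harmonicity of a one-form into $[\sD^2,\beta]=0$ is likewise glossed over in the paper, so your caution there is not misplaced.
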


\begin{proof} 
Suppose we have a compact Riemannian spin manifold $(M,g_E)$ with vanishing
Euler characteristic. Let $z$ be a normalised non-vanishing vector field, and $z^\#$
the dual one-form. Then we get a Lorentzian metric  
$
g(v,w):=g_E(v,w)-2z^\#(v)z^\#(w).
$
With the same notation as in Section \ref{sec:classical},  we set $\beta=\gamma_E(z)$, 
and so we obtain the Lorentz-type
spectral triple 
$(C^\infty(M),L^2(M,S),\D,\beta)$, using Proposition \ref{prop:Baum}. 

Now suppose  that we can choose some fundamental symmetry $\tilde\beta$ 
to be Lorentz harmonic. Then
$\frac{1}{4}\mbox{Index}((1-\Gamma)\D_E(1+\Gamma))=0$, and we have shown previously that $\D_E$ 
is the  Dirac operator of the 
Riemannian metric $\tilde{g}_E$ canonically associated to our Lorentzian metric and
fundamental symmetry $\tilde{\beta}$. Hence if we can choose 
$\tilde{\beta}$ Lorentz harmonic, we must
have $\hat{A}(M)=0$. Equivalently, if $\hat{A}(M)\neq 0$ we see that we
can not choose $\tilde{\beta}$ harmonic, and in particular $\beta=\gamma_E(z)$ is
not Lorentz harmonic.

Let $M_1=S^2\times F_g$ where $F_g$ is a compact orientable surface of
genus $g\geq 2$. Then $\chi(M_1)=2(2-g)$ and $\hat{A}(M_1)=0$. Let
$M_2$ be a $K3$ space, so $\chi(M_2)=24$ and $\hat{A}(M_2)=2$. Finally
let $M=M_1\# M_2$ be the connected sum. Then
$\chi(M)=\chi(M_1)+\chi(M_2)-2=26-2g$ and
$\hat{A}(M)=\hat{A}(M_1)+\hat{A}(M_2)=2$. So for $g=13$ we may
construct a Lorentzian metric on $M$, and since the A-roof  genus does
not vanish, there do not exist any Lorentz harmonic fundamental symmetries.
\end{proof}

\end{document}